\allowdisplaybreaks \numberwithin{equation}{section}
\numberwithin{equation}{section}
\newtheorem{theorem}{Theorem}[section]
\newtheorem{corollary}[theorem]{Corollary}
\newtheorem{lemma}[theorem]{Lemma}
\theoremstyle{definition}
\newtheorem{definition}[theorem]{Definition}
\theoremstyle{remark}
\newtheorem{remark}[theorem]{Remark}
\begin{document}

\title
{On the steady axisymmetric  vortex rings for 3-D incompressible Euler flows}

 \author{Daomin Cao, Weicheng Zhan}

\address{Institute of Applied Mathematics, Chinese Academy of Sciences, Beijing 100190, and University of Chinese Academy of Sciences, Beijing 100049,  P.R. China}
\email{dmcao@amt.ac.cn}

\address{Institute of Applied Mathematics, Chinese Academy of Sciences, Beijing 100190, and University of Chinese Academy of Sciences, Beijing 100049,  P.R. China}
\email{zhanweicheng16@mails.ucas.ac.cn}


\begin{abstract}
In this paper, we study nonlinear desingularization of steady vortex rings of three-dimensional incompressible Euler flows. We construct a family of steady vortex rings (with and without swirl) which constitutes a desingularization of the classical circular vortex filament in $\mathbb{R}^3$. The construction is based on a study of solutions to the similinear elliptic problem
\begin{equation*}
-\frac{1}{r}\frac{\partial}{\partial r}\Big(\frac{1}{r}\frac{\partial\psi^\varepsilon}{\partial r}\Big)-\frac{1}{r^2}\frac{\partial^2\psi^\varepsilon}{\partial z^2}=\frac{1}{\varepsilon^2}\left(g(\psi^\varepsilon)+\frac{f(\psi^\varepsilon)}{r^2}\right),
\end{equation*}
where $f$ and $g$ are two given functions of the Stokes stream function $\psi^\varepsilon$, and $\varepsilon>0$ is a small parameter.
\bigskip

\noindent
\emph{Keywords}: Axisymmetric Euler equations; Vortex rings; Nonlinear desingularization; Semilinear  elliptic equation
\end{abstract}

\maketitle

\section{Introduction and Main results}

In this paper, we study the three-dimensional (3-D) axisymmetric Euler flows. The motion of general incompressible steady Euler fluid in $\mathbb{R}^3$ is described by the following equations

\begin{equation}\label{1-1}
~(\mathbf{v}\cdot\nabla)\mathbf{v}=-\nabla P,
\end{equation}
\begin{equation}\label{1-2}
 \nabla\cdot\mathbf{v}=0,
\end{equation}
where $\mathbf{v}=[v_1,v_2,v_3]$ is the velocity field and $P$ is the scalar pressure. Let $\{\mathbf{e}_r, \mathbf{e}_\theta, \mathbf{e}_z\}$ be the usual cylindrical coordinate frame. Then if the velocity field $\mathbf{v}$ is axisymmetric, i.e., $\mathbf{v}$ does not depend on the $\theta$ coordinate, it can be expressed in the following way
\begin{equation*}
  \mathbf{v}=v^r(r,z)\mathbf{e}_r+v^\theta(r,z)\mathbf{e}_\theta+v^z(r,z)\mathbf{e}_z.
\end{equation*}
 The component $v^\theta$ in the $\mathbf{e}_\theta$ direction is called the swirl velocity. Let $\pmb{\omega}:=\nabla\times\mathbf{v}$ be the corresponding vorticity field. Taking into account $(\mathbf{v}\cdot\nabla)\mathbf{v}=\nabla(|\mathbf{v}|^2/2)-\mathbf{v}\times \pmb{\omega}$, we rewrite the momentum equation \eqref{1-1} as follows:
 \begin{equation}\label{1-3}
   \mathbf{v}\times \pmb{\omega}=\nabla\left(P+\frac{|\mathbf{v}|^2}{2} \right).
 \end{equation}
 In cylindrical coordinates $(r,\theta,z)$, the vorticity field $\pmb{\omega}=(\omega^r,\omega^{\theta},\omega^z)$ is given by
\begin{equation}\label{1-4}
  \pmb{\omega}=-\partial_z v^{\theta}\mathbf{e}_r+\left(\partial_z v^r-\partial_r v^z\right)\mathbf{e}_\theta+\frac{1}{r}\partial_r (r v^{\theta})\mathbf{e}_z.
\end{equation}
 By the continuity equation \eqref{1-2}, we can find a Stokes stream function $\psi(r,z)$ such that
\begin{equation}\label{1-5}
   v^r=-\frac{1}{r}{\partial_z\psi},\ \  \  v^z=\frac{1}{r}{\partial_r\psi}.
\end{equation}
 Let $\zeta:={\omega^{\theta}}/{r}$ and $\xi:=r v^{\theta}$. By equation \eqref{1-1}, we see that
 \begin{equation}\label{1-6}
\mathbf{v}\cdot\nabla\xi=0,
\end{equation}
 where $\mathbf{v}\cdot\nabla=v^r\partial_r+v^z\partial_z$. The combination of \eqref{1-5} and \eqref{1-6} then leads to
 \begin{equation*}
   \nabla^\perp \psi\cdot \nabla\xi=0,\ \ \text{with}\ \ \nabla^\perp:=(-\partial_z, \partial_r),
 \end{equation*}
 which holds automatically if there exists some function $H:\mathbb{R}\to \mathbb{R}$ satisfying
\begin{equation}\label{1-7}
\xi=H(\psi).
\end{equation}
Substituting the expressions $\eqref{1-6}$ and $\eqref{1-7}$ into \eqref{1-4}, we obtain
\begin{equation}\label{1-8}
  \pmb{\omega}=-\frac{1}{r}H'(\psi)\partial_z\psi\mathbf{e}_r+r\mathcal{L}\psi\mathbf{e}_\theta+\frac{1}{r}H'(\psi)\partial_r\psi\mathbf{e}_z,
\end{equation}
where
\begin{equation*}
  \mathcal{L}\psi:=-\frac{1}{r}\frac{\partial}{\partial r}\Big(\frac{1}{r}\frac{\partial\psi}{\partial r}\Big)-\frac{1}{r^2}\frac{\partial^2\psi}{\partial z^2}.
\end{equation*}
Substituting \eqref{1-8} into the equation \eqref{1-3}, after simple transformations, we obtain
\begin{equation*}
  \mathcal{L}\psi=-\frac{\partial_r B}{\partial_r \psi}+\frac{H(\psi)H'(\psi)}{r^2}=\zeta,
\end{equation*}
where $B:=P+|\mathbf{v}|^2/2$ is the Bernoulli integral (or dynamic pressure). By Bernoulli's theorem, $B$ can only be a function of $\psi$ alone. Thus we are led to the Bragg-Hawthorne equation \cite{BH, Fre07}
\begin{equation}\label{1-10}
\mathcal{L}\psi=-B'(\psi)+\frac{1}{r^2}H(\psi)H'(\psi).
\end{equation}
This family of equations is also sometimes referred to as the Long-Squire equation (see \cite{Long53, Sq56}). It is usually known as the Grad-Shafranov equation in plasma physics (see \cite{An98}). Once the Stokes stream function $\psi$ satisfying \eqref{1-10} for an arbitrary choice of generators $(B, H)$ is obtained, one can easily construct the corresponding solutions of the original or primitive variables $(\mathbf{v}, P)$ and vice versa.

In this paper, we shall refer to the axisymmetric flows  as ``vortex rings" if there is a toroidal region inside of which the $\theta$-component of vorticity $\omega^\theta$ is nonzero (the core), and outside of which the flow is irrotational. Vortex rings are an intriguing marvel of fluid dynamics that are ubiquitous throughout nature. This type of solutions of the Euler equations are classical objects of study in fluid dynamics. The study of vortex rings can be traced back to the works of Helmholtz \cite{He} in 1858 and Kelvin \cite{Tho} in 1867. For a detailed and historical description of this problem, we refer to \cite{Cao2, DV, BF1}. In addition, \cite{Akh, Lim95, MGT, Sa92} are some good historical reviews of the achievements in experimental, analytical, and numerical studies of vortex rings.

Generally, vortex rings are divided into two cases according to whether the swirl velocity $v^\theta$ (or, equivalently, $H$) is zero. One is the swirling case, and the other is the non-swirling case. Global existence of vortex rings without swirl (i.e., $\mathbf{v}^\theta\equiv0$) was first established by Fraenkel and Berger \cite{BF1}. After this pioneering work, much work is devoted to further studying the existence of vortex rings without swirl. See, e.g., \cite{AS,BB, B1, Cao2, DV, BF1, FT, Ni, No,Yang2} and the references therein. Compared with vortex rings without swirl, the dynamics of vortex rings with swirl seem to have received relatively less attention. Limited work has been done in this aspect. The only explicit solution for vortices with swirl was first found by Hicks \cite{H} and rediscovered by Moffatt \cite{M, M2} (see also \cite{Abe, Fre07}). Fraenkel \cite{Fra} and Tadie \cite{Ta2} generalize the variational approach in \cite{BF1} for vortex rings without swirl. In \cite{M2}, Moffatt adopted magnetic relaxation methods to obtain a wide family of vortex rings with swirl. However, the functions $B$ and $H$ in \cite{M2} are not prescribed, but determined a posteriori. Turkington \cite{Tur89} proposed a variational principle for the potential vorticity $\zeta$ and obtained existence of solutions of \eqref{1-10} for some special prescribed $B$ and $H$. In the case of bound domains, instead of using a variational approach, Elcrat and Miller \cite{EM} developed an iterative procedure to solve $\eqref{1-10}$ for some prescribed $B$ and $H$. Very recently, Abe \cite{Abe} studied the existence of vortex rings in Beltrami flows, which corresponds to constant dynamic pressure $B$.

The purpose of this paper is to construct a family of steady vortex rings (with and without swirl) which constitutes a desingularization of the classical circular vortex filament in $\mathbb{R}^3$. From the above discussion, we are thus interested in studying the asymptotics of solutions of
\begin{equation}\label{aim}
  \mathcal{L}\psi^\varepsilon=\frac{1}{\varepsilon^2}\left(g(\psi^\varepsilon)+\frac{f(\psi^\varepsilon)}{r^2}\right)
\end{equation}
where $f$, $g$ are two given functions, and $\varepsilon>0$ is a parameter.
The Stokes stream function $\psi^\varepsilon$ will bifurcate from Green's funtion for $\mathcal{L}$ as $\varepsilon\to 0^+$. This kind of bifurcation phenomenon is called ``nonlinear desingularization''(refer to \cite{BF2}).

For the non-swirling case ($f\equiv 0$), several desingularization results have been obtained. Nonlinear desingularization for general free-boundary problems was studied in \cite{BF2}, but asymptotic behavior of the solutions they constructed could not be studied precisely because of the presence of a Lagrange multiplier in the nonlinearity $g$. In \cite{FT}, Friedman and Turkington obtained desingularization results of vortex rings without swirl in the whole space when the vorticity function $g$ is a step function. In \cite{Ta}, Tadie studied the asymptotic behavior by letting the flux diverge. Yang studied the asymptotic behaviour of a family of solutions $\psi^\varepsilon$ of $\eqref{aim}$ for some smooth function $g$ \cite{Yang2}. However, their limiting objects are degenerate vortex rings with vanishing circulation. In \cite{DV}, de Valeriola and Van Schaftingen considered this problem for $g(t)=t_+^p $ with $p>1$, where $t_+:=\max\{0,t\}$. Recently, Cao et al. \cite{Cao2} further investigated desingularization of vortex rings without swirl when $g$ is a step function and generalized the results in \cite{FT} to some extent. In \cite{Cao4}, Cao et al. studied the asymptotic behavior of vortex rings with some general vorticity functions.

For the swirling case, it seems that there are very few results in this direction. In \cite{Ta2}, Tadie considered this problem outside infinite cylinders and investigated the asymptotic behaviour by letting the flux diverge. In \cite{Tur89}, Turkington constructed a two-parameter (w.r.t. $\varepsilon>0$ and $\alpha\ge 0$) family of desingularized steady solutions which corresponds to \eqref{aim} with
\begin{equation*}
  f(t)=t_+, \ \ g^{\varepsilon,\alpha}(t)=\varepsilon \alpha \chi_{_{\{t>0\}}},
\end{equation*}
where $\chi_{_A}$ denotes the characteristic function of $A$.

In this paper, we further study this problem.  For technical reasons, we make the following assumptions on $(f,g)$.
\begin{itemize}
  \item [$(a1)$] $f\in C(\mathbb{R})$ and $g\in C(\mathbb{R}\backslash \{0\})$ are both nonnegative and nondecreasing.
\end{itemize}
Set
\begin{equation*}
  i(r,t)=g(t)+\frac{f(t)}{r^2}
\end{equation*}
and $I(r,t)=\int_{0}^{t}i(r,s)ds$. Let $J(r,\cdot)$, the modified conjugate function to $I(r,\cdot)$, be defined by
\begin{equation*}
  J(r,s)=\sup_{t}[st-I(r,t)]\ \ \text{if}\ \ s\ge 0~;\ \ J(r,s)=0\ \ \text{if}\ \ s<0.
\end{equation*}
We further require the following:
\begin{itemize}
  \item[$(a2)$] For each $r>0$, $i(r,t)=0$ if $t\le0$, and $i(r,t)$ is strictly increasing (w.r.t.~t) in $[0,+\infty)$.
\item [$(a3)$] For each $d>0$, there exist $\delta_0\in(0,1)$ and $\delta_1>0$ such that
  \begin{equation*}
    I(r,t)\le \delta_0 i(r,t)t+\delta_1i(r,t), \ \ \  \ \forall~t>0, \ \forall~0<r\le d,
  \end{equation*}
  \item[$(a4)$] For all $\tau>0$, there holds
  \begin{equation*}
    \lim_{t\to+\infty}i(r,t)e^{-\tau t}=0, \ \ \forall\ r>0.
  \end{equation*}
\end{itemize}

\begin{remark}
we give some remarks on the above assumptions.
\begin{itemize}
  \item[1)] We restrict that $f$ and $g$ are both non-decreasing, which is believed to include the cases of primary interest(see \cite{BF1}). Note that the profile function $g$ may has a simple discontinuity. This case corresponds to a jump in vorticity
at the boundary of the cross-section of the vortex ring.
  \item[2)] The assumption $(a2)$ rules out the case that $f\equiv0$ and $g$ is a step function. This case has been studied in \cite{Cao2,FT}. However, our approach can also apply to this situation with some modifications. For the sake of consistency, we will not consider this case here.
  \item [3)] The assumption $(a3)$ can be regarded as a weakened type of the Ambrosetti-Rabinowitz condition, cf. condition $(p5)$ in \cite{AR}. It implies $i(r,t)\to +\infty$ as $t\to +\infty$, see \cite{Ni}. Note that $i(r,\cdot)$ and $\partial_s J(r,\cdot)$ are inverse graphs (see \cite{Roc}), one can verify that $(a3)$ is actually equivalent to
  \begin{equation*}
    \noindent(a3)':\ \ \ \  \   J(r,s)\ge (1-\delta_0)\partial_sJ(r,s)s-\delta_1s, \ \ \ \forall~t>0, \ \forall~0<r\le d.\ \ \ \ \  \ \ \ \ \ \
  \end{equation*}
  \item [4)] The assumption $(a4)$ relaxes the condition (1.6) in \cite{Ber}. Appropriate growth conditions are usually necessary in this type of problem.
  \item [5)] It is easy to find some $(f,g)$ satisfying $(a1)$-$(a4)$. For example, we have
  \begin{itemize}
    \item [(i)]$f\equiv 0$ and $g(t)=t_+^p$ for $p>0$. In such a case, one has
    \begin{equation*}
   \ \ \  \ \ \   i(r,t)=t_+^p, \ \ I(r,t)=\frac{1}{p+1}t_+^{p+1};\ \ J(r,s)=\frac{p}{p+1}s_+^{1+\frac{1}{p}},\ \ \partial_sJ(r,s)=s_+^{\frac{1}{p}}.
    \end{equation*}
     As mentioned above, de Valeriola and Van Schaftingen \cite{DV} has studied this case for $p>1$. If $p=1$, this model also occurs in the plasma problem, see, e.g., \cite{Ber, CF1, Te}. It describes the equilibrium of a plasma confined in a toroidal cavity (a ``Tokamak machine''). In \cite{CF1}, Caffarelli and Friedman studied asymptotic behavior of this problem in a bounded planar domain.
    \item [(ii)] $f(t)=t_+$ and $g(t)=\alpha \chi_{_{\{t>0\}}}$ for some $\alpha\ge0$. In such a case, one has
    \begin{equation*}
    \begin{split}
         i(r,t)&=\alpha \chi_{_{\{t>0\}}}+\frac{t_+}{r^2}, \ \ \ \ I(r,t)=\alpha t_++\frac{t_+^2}{2r^2}~; \\
          J(r,s)&=\frac{1}{2}(s-\alpha)_+^2r^2,\ \ \ \ \partial_sJ(r,s)=(s-\alpha)_+r^2.
    \end{split}
    \end{equation*}
     As mentioned above, Turkington \cite{Tur89} was concerned with this case.
    \item[(iii)] $f(t)=t_+^p$ for $p>0$ and $g(t)\equiv0$. In such a case, one has
     \begin{equation*}
    \begin{split}
         i(r,t)&=\frac{t_+^p}{r^2}, \ \ \ \ \ \ \  I(r,t)=\frac{1}{r^2}\frac{t_+^{p+1}}{p+1}~; \\
          J(r,s)&=\frac{pr^{\frac{2}{p}}}{p+1}s_+^{1+\frac{1}{p}},\ \ \ \  \partial_sJ(r,s)=\left({r^2s_+}\right)^\frac{1}{p}.
    \end{split}
    \end{equation*}
    We remark that in this case the flow is a Beltrami flow (see \cite{Abe, MB, Wu}).
    \item [(iv)] $f(t)=g(t)=t_+^p$ for $p>0$. In such a case, one has
    \begin{equation*}
    \begin{split}
         i(r,t)&=\left(1+\frac{1}{r^2}\right)t_+^p, \ \ \ \ I(r,t)=\left(1+\frac{1}{r^2}\right)\frac{t_+^{p+1}}{p+1}~; \\
          J(r,s)&=\frac{p}{p+1}\left(\frac{r^2}{r^2+1}\right)^\frac{1}{p}s_+^{1+\frac{1}{p}},\ \ \ \  \partial_sJ(r,s)=\left(\frac{r^2s}{r^2+1}\right)^\frac{1}{p}.
    \end{split}
    \end{equation*}
     To the best of our knowledge, there are no desingularization results in $\mathbb{R}^3$ for this case.
  \end{itemize}
\end{itemize}
    Of course, there are many more generators $(f,g)$ satisfying $(a1)$-$(a4)$. The situation we consider here covers almost all previously known cases.
\end{remark}

Now, we turn to state the main result. To this end, we need to introduce some notation.
Let $\Pi=\{(r,z)~|~r>0, z\in\mathbb{R}\}$ denote a meridional half-plane ($\theta$=constant); $d\nu=rdrdz$ is a measure on $\Pi$.

We define $\mathcal{K}$, the inverse of $\mathcal{L}$, as follows. One can check that the operator $\mathcal{K}$ is well-defined; see, e.g., \cite{BB}.
\begin{definition}
The Hilbert space $\mathcal{H}$ is the completion of $C_0^\infty(\Pi)$ with the scalar products
\begin{equation*}
  \langle u,v\rangle_\mathcal{H}=\int_\Pi\frac{1}{r^2}\nabla u\cdot\nabla v d\nu.
\end{equation*}
We define inverses $\mathcal{K}$ for $\mathcal{L}$ in the weak solution sense: $\mathcal{K}u\in \mathcal{H}$ and
\begin{equation}\label{2-1}
  \langle \mathcal{K}u,v\rangle_\mathcal{H}=\int_\Pi uv d\nu \ \ \ for\  all\  v\in \mathcal{H}, \ \ when \ u \in  L^{10/7}(\Pi,r^3drdz).
\end{equation}
\end{definition}

As in \cite{DV}, for an axisymmetric set $A\subseteq \mathbb{R}^3$ we introduce the axisymmetric distance as follows
\begin{equation*}
  dist_{\mathcal{C}_r}(A)=\sup_{x\in A}\inf_{x'\in{\mathcal{C}_r}}|x-x'|,
\end{equation*}
where $\mathcal{C}_r:=\{x\in\mathbb{R}^3~|x_1^2+x_2^2=r^2,x_3=0\}$ for some $r>0$.

Our main result in this paper is as follows.

\begin{theorem}\label{thm1}
Let $\kappa>0$ and $W>0$ be two given numbers. Suppose $(-B',HH')$ satisfies $(a1)-(a4)$ and $H(0)=0$. Then for all sufficiently small $\varepsilon>0$, there exists a solution $(\psi^\varepsilon,\xi^\varepsilon,\zeta^\varepsilon)$  with the following properties:
\begin{itemize}
\item[(i)]For any $p>1$, $0<\gamma<1$, $\psi^\varepsilon\in W^{2,p}_{\text{loc}}(\Pi)$ and satisfies
\begin{equation*}
  \mathcal{L}\psi^\varepsilon=\zeta^\varepsilon\ \ \text{a.e.} \ \text{in} \ \Pi.
\end{equation*}

\item[(ii)] $(\psi^\varepsilon,\xi^\varepsilon,\zeta^\varepsilon)$ is of the form
\begin{equation*}
\begin{split}
    & \psi^\varepsilon=\mathcal{K}\zeta^\varepsilon-\frac{Wr^2}{2}\log{\frac{1}{\varepsilon}}-\mu^\varepsilon,\ \ \xi^\varepsilon=\frac{1}{\varepsilon}H(\psi^\varepsilon),\ \  \\
     &  \zeta^\varepsilon=\frac{1}{\varepsilon^2}\left(-B'(\psi^\varepsilon)+\frac{1}{r^2}H(\psi^\varepsilon)H'(\psi^\varepsilon)\right),
\end{split}
\end{equation*}
for some $\mu^\varepsilon>0$ depending on $\varepsilon$. Furthermore, we have
\begin{equation*}
  \psi^\varepsilon \le C,\ \ \ \xi^\varepsilon\le C/\varepsilon,\ \ \ \zeta^\varepsilon\le C/\varepsilon^2,
\end{equation*}
for some positive constant $C$ independent of $\varepsilon$.

 \item[(iii)]$\zeta^\varepsilon$ and $\zeta^\varepsilon$ both have compact supports in $\Pi$ and $\int_\Pi \zeta^\varepsilon d\nu\equiv \kappa$. In addition, $\psi^\varepsilon$,  $\xi^\varepsilon$ and $\zeta^\varepsilon$ are symmetric decreasing in $z$. Also,~ $\partial_z\psi^\varepsilon(r,z)<0$ for all $r$, $z>0$.

 \item[(iv)]
 There exist some constants $R_0, R_1>0$ independent of $\varepsilon$ such that
 \begin{equation*}
   R_0\varepsilon \le diam\left(supp(\zeta^\varepsilon)\right)\le R_1\varepsilon.
 \end{equation*}
Moreover, as $\varepsilon \to 0^+$, we have
\begin{equation*}
  dist_{\mathcal{C}_{r_{_*}}}\left(supp(\zeta^\varepsilon)\right)  \to 0,\ \ \text{with}\ \ r_*=\frac{\kappa}{4\pi W},
\end{equation*}
and
\begin{equation*}
  \mu^\varepsilon  =\frac{3\kappa^2}{32\pi^2W}\log{\frac{1}{\varepsilon}}+O(1).
\end{equation*}
\item[(v)] There holds
\begin{equation*}
  \mathbf{v}^\varepsilon=\frac{1}{r}\left(-\frac{\partial\psi^\varepsilon}{\partial z}\mathbf{e}_r+\xi^\varepsilon\mathbf{e}_\theta+\frac{\partial\psi^\varepsilon}{\partial r}\mathbf{e}_z\right)\to -W\log\frac{1}{\varepsilon}~\mathbf{e}_z\ \  \text{at}\ \infty.
\end{equation*}
Moreover, as $r\to 0$,
\begin{equation*}
     \frac{1}{r}\frac{\partial\psi^\varepsilon}{\partial z}\to 0,\ \ \frac{\xi^\varepsilon}{r}\to 0\ \text{and}~ \ \frac{1}{r}\frac{\partial\psi^\varepsilon
     }{\partial r}\  \text{approaches a finite limit}.
\end{equation*}
In addition, the vortex core $supp(\zeta^\varepsilon)$ is a topological disc in $\Pi$.
\end{itemize}
\end{theorem}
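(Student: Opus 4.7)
The plan is to construct $\zeta^\varepsilon$ as a constrained maximizer of a kinetic-energy-type functional penalized by the convex conjugate $J$. On the admissible class
$$
\mathcal{A}_\varepsilon = \Bigl\{\zeta \in L^{p}(\Pi,d\nu)\,:\,\zeta\ge 0,\ \zeta\text{ Steiner-symmetric in }z,\ \int_\Pi \zeta\,d\nu = \kappa,\ \mathrm{supp}\,\zeta \subset D_\Lambda\Bigr\}
$$
with $D_\Lambda=\{(r,z):\Lambda^{-1}\le r\le \Lambda,\ |z|\le \Lambda\}$ and $\Lambda$ large but $\varepsilon$-independent, I would maximize
$$
\mathcal{E}_\varepsilon(\zeta) = \tfrac12 \int_\Pi \zeta\,\mathcal{K}\zeta\,d\nu - \tfrac{W}{2}\log\tfrac{1}{\varepsilon}\int_\Pi r^2\zeta\,d\nu - \tfrac{1}{\varepsilon^2}\int_\Pi J(r,\varepsilon^2\zeta)\,d\nu.
$$
Existence of a maximizer $\zeta^\varepsilon$ follows from the direct method: the Hardy/Sobolev estimates packaged into $\mathcal{H}$ make $\mathcal{K}$ compact on the admissible class, the $J$-term is convex and weakly lower semicontinuous by (a1)--(a2), and (a4) together with (a3)$'$ prevent loss of mass.

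The Euler-Lagrange equation, with Lagrange multiplier $\mu^\varepsilon$ enforcing $\int\zeta\,d\nu=\kappa$, reads
$$
\partial_s J\bigl(r,\varepsilon^2\zeta^\varepsilon\bigr) = \Bigl(\mathcal{K}\zeta^\varepsilon - \tfrac{Wr^2}{2}\log\tfrac{1}{\varepsilon} - \mu^\varepsilon\Bigr)_+\ \ \text{a.e.~in }\Pi.
$$
Putting $\psi^\varepsilon := \mathcal{K}\zeta^\varepsilon - \tfrac{Wr^2}{2}\log(1/\varepsilon) - \mu^\varepsilon$ and inverting $\partial_sJ(r,\cdot)$ by $i(r,\cdot)$ (allowed by (a1)--(a2)), one recovers $\varepsilon^2\zeta^\varepsilon = i(r,\psi^\varepsilon)$, i.e.~the Bragg-Hawthorne equation \eqref{1-10} with $f=HH'$ and $g=-B'$. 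Standard elliptic regularity upgrades $\psi^\varepsilon$ to $W^{2,p}_{\mathrm{loc}}$; the pointwise bounds in (ii) come from iterating on (a4) in the spirit of Brezis-Merle. Steiner symmetrization in $z$ preserves $\mathcal{E}_\varepsilon$ via a rearrangement inequality for the Green's function of $\mathcal{L}$, while a moving-plane argument on $z>0$ yields the strict monotonicity claim in (iii).

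The asymptotic statements (iv)--(v) are the heart of the proof. Testing $\mathcal{E}_\varepsilon$ against an explicit trial vorticity concentrated in an $\varepsilon$-disc centered at $(r,0)$, and using the Kelvin-Hicks expansion
$$
\mathcal{K}\zeta(r,z) \sim \frac{rr'}{4\pi}\log\frac{1}{|(r,z)-(r',z')|}\quad\text{near the diagonal,}
$$
one derives $\mathcal{E}_\varepsilon(\zeta)\simeq A(r)\kappa^2\log(1/\varepsilon) - \tfrac12 W r^2\kappa\log(1/\varepsilon)+O(1)$ with $A(r)$ explicit. Balancing the induced propagation speed of the limit filament against the imposed traveling speed $W\log(1/\varepsilon)$ pins the concentration radius at $r_* = \kappa/(4\pi W)$; inserting the matching upper bound into the energy identity obtained by multiplying the Euler-Lagrange relation by $\zeta^\varepsilon$ and absorbing the $J$-term via (a3)$'$ then gives $\mu^\varepsilon = \tfrac{3\kappa^2}{32\pi^2 W}\log(1/\varepsilon)+O(1)$. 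A blow-up $\tilde\psi^\varepsilon(y)=\psi^\varepsilon(r_*+\varepsilon y_1,\varepsilon y_2)$ converges to a compactly supported planar profile solving $-\Delta\tilde\psi_0 = i(r_*,\tilde\psi_0)$ on $\mathbb{R}^2$, which delivers both diameter bounds: the upper bound $R_1\varepsilon$ from the finite support of the limit, and the lower bound $R_0\varepsilon$ from the pointwise control $\zeta^\varepsilon\le C/\varepsilon^2$ combined with $\int\zeta^\varepsilon d\nu=\kappa$. The velocity limits in (v) follow from the Biot-Savart expansion of $\mathcal{K}\zeta^\varepsilon$ at infinity and axis regularity as $r\to 0$.

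The main obstacle is twofold. First, I must verify that the artificial restriction $\mathrm{supp}\,\zeta\subset D_\Lambda$ is not active at the optimum, which requires an a priori estimate forcing $\psi^\varepsilon<0$ outside some $\varepsilon$-independent compact set (so that $\zeta^\varepsilon$ vanishes there by (a2)); this is driven by the growth-suppressing effect of $-\tfrac{Wr^2}{2}\log(1/\varepsilon)$ inside $\psi^\varepsilon$. Second, extracting the sharp coefficient $3\kappa^2/(32\pi^2 W)$ in $\mu^\varepsilon$ demands a careful tangential expansion of the Green's function of $\mathcal{L}$ along $\mathcal{C}_{r_*}$, together with quantitative control on the $O(1)$ remainder in the energy identity matching the precision claimed in (iv).
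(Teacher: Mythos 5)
Your overall strategy (maximize the penalized kinetic energy over a constrained vorticity class, read off the equation from the Euler--Lagrange relation, then do a concentration/blow-up analysis) is the same as the paper's, but there are concrete gaps at the two places where the paper has to work hardest. First, your admissible class has no pointwise cap on $\zeta$, and your existence argument (``$\mathcal{K}$ compact on the admissible class, the $J$-term convex and l.s.c.'') does not close: assumptions $(a1)$--$(a4)$ allow $i(r,t)$ to grow as fast as any sub\mbox{-}exponential, so $J(r,\cdot)$ may be only barely superlinear and the penalization does not control any $L^p$ norm with $p>1$; a maximizing sequence need not be bounded in your space. This is exactly why the paper maximizes over $\mathcal{A}_{\varepsilon,\Lambda}=\{0\le\zeta\le\Lambda/\varepsilon^2,\ \int\zeta\,d\nu\le\kappa,\ \mathrm{supp}\,\zeta\subseteq D\}$: existence is then immediate, but the Euler--Lagrange relation acquires a ``patch part'' $\{\zeta=\Lambda/\varepsilon^2\}$, and removing it is a genuine step (Lemmas \ref{le13}--\ref{le14}): one needs the a priori bound $\psi^{\varepsilon,\Lambda}\le C\log\Lambda+C$ (obtained from the Green's-function upper bound on $\mathrm{supp}\,\zeta^{\varepsilon,\Lambda}$ matched against the lower bound for $\mu^{\varepsilon,\Lambda}$) together with $(a4)$, which guarantees $\partial_sJ(r_*,\Lambda)$ outgrows $\log\Lambda$, to pick $\Lambda_0$ making the patch empty. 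Your clean relation $\partial_sJ(r,\varepsilon^2\zeta^\varepsilon)=\psi^\varepsilon_+$ is therefore asserted, not derived. Relatedly, the uniform bounds $\psi^\varepsilon\le C$ and $\zeta^\varepsilon\le C/\varepsilon^2$ in (ii) are not a Brezis--Merle iteration; in the paper they are a by-product of this same $\mu^\varepsilon$-versus-$\mathcal{K}\zeta^\varepsilon$ comparison, and they are needed \emph{before} the blow-up can be run.

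Second, your route to the diameter upper bound --- ``the blow-up limit has compact support, hence $\mathrm{diam}(\mathrm{supp}\,\zeta^\varepsilon)\le R_1\varepsilon$'' --- does not follow: local convergence of the rescaled profiles on compact sets is consistent with a vanishing fraction of the vorticity living at intermediate scales between $\varepsilon$ and $\varepsilon^{\gamma}$. The paper first gets $\mathrm{diam}\le 8r_*\varepsilon^{\gamma}$ for every $\gamma<1$ by a two-scale Green's-function decomposition, and only then upgrades to $O(\varepsilon)$ (Lemma \ref{le17}) using the sharp expansion $\mu^\varepsilon=\bigl(\tfrac{\kappa r_*}{2\pi}-\tfrac{Wr_*^2}{2}\bigr)\log\tfrac1\varepsilon+O(1)$ to force $\int_{\{\sigma\le R\varepsilon\}}\zeta^\varepsilon\,d\nu>\kappa/2$ for a fixed large $R$. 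Two smaller points: the near-diagonal expansion of the kernel is $K\sim\frac{\sqrt{rr'}}{2\pi}\log\frac1{|x-x'|}$, not $\frac{rr'}{4\pi}\log\frac1{|x-x'|}$ (Lemma \ref{le2}); and the paper imposes $\int\zeta\,d\nu\le\kappa$ rather than equality so that the multiplier $\mu^\varepsilon$ is nonnegative by construction, which is what makes the maximum-principle step $\psi^\varepsilon\le 0$ on $\Pi\setminus D$ (Lemma \ref{le15}), and hence the validity of the equation on all of $\Pi$, go through.
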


\begin{remark}

Now, let us give some remarks on the above result.

\begin{itemize}
  \item Kelvin and Hicks showed that if the vortex ring with circulation $\kappa$ has radius $r_*$ and its cross-section $\varepsilon$ is small, then the vortex ring moves at the velocity\,(see, e.g., \cite{Lamb, Wu})
\begin{equation*}
  \frac{\kappa}{4\pi r_*}\Big(\log \frac{8r_*}{\varepsilon}-\frac{1}{4}\Big).
\end{equation*}
One can see that our result is consistent with this Kelvin-Hicks formula.
  \item If $B'\equiv 0$, then by \eqref{1-8} we have
  \begin{equation*}
  \pmb{\omega}=\frac{H'(\psi^\varepsilon)}{\varepsilon}\mathbf{v}^\varepsilon.
  \end{equation*}
   Consequently, these vortex rings are  actually a family of Beltrami flows (see \cite{Abe, MB, Wu}). As a particular kind of solutions of the Euler equations,
   Beltrami flow has some special properties, and it is one of the most interesting research objects of fluid mechanics. We refer the interested readers to \cite{Abe} for some recent results in this direction. We also point out that the steady solutions from the above theorem are actually traveling wave solutions to the time-dependent Euler equations (cf.\cite{Abe}).
  \item The axisymmetric ideal magnetohydrodynamic equilibria with incompressible flows is governed by a generalized Grad-Shafranov equation which is of the form (see, e.g., \cite{Tas, Throu})
      \begin{equation*}
        \mathcal{L}\psi=g(\psi)+\frac{f(\psi)}{r^2}+r^2h(\psi)
      \end{equation*}
      for some given functions $f$, $g$ and $h$. Our approach can be applied to this problem in a straightforward way. Indeed, one can expect to extend our method to some more general semilinear elliptic equations.
\end{itemize}

 It is instructive to compare the known results with ours. Let $(g,f)=(-B', HH')$.

 First, let us discuss the non-swirling case. Note that, in this situation, $f\equiv0$.
\begin{itemize}
  \item  When $g(t)=t_+^p$, $p>1$, de Valeriola and Van Schaftingen have obtained similar result, cf. Theorem 1 in \cite{DV}. Several types of domains were considered in \cite{DV}. If there are walls or obstacles, the geometry of domain will affect the asymptotic localization of vortex rings. We remark that our method can also be applied to more general domains as in \cite{Cao2, DV}.
  \item  When $g(t)$ is the Heaviside function, some desingularization results were established in \cite{Cao2, FT}. In \cite{FT}, the velocity at infinity of the flow arises as a Lagrange multiplier and hence is left undetermined. Moreover, their method seems to rely in essential way on the connectness of the vortex core, which is hard to prove. This defect has been improved by Cao et al. \cite{Cao2}.   Our work can be viewed as a continuation of the recent work \cite{Cao2, Cao4}.
  \item  When $g(t)=t_+$,  Caffarelli and Friedman \cite{CF1} studied asymptotic behavior of this problem in a bounded planar domain. They constructed a
family of plasmas which were shown to converge to the part of the boundary of the domain, which is the farthest away from the $z$-axis. Applying our method, we may obtain similar results as in \cite{CF1}.
\end{itemize}

Second, we consider the swirling case. There have been very few results in this aspect.
\begin{itemize}
  \item Tadie's works \cite{Ta, Ta2} were based on the variational approach proposed in \cite{BF1}. For the variational solutions constructed in this way, the vortex-strength parameter will arise as a Lagrange multiplier and hence is left undetermined. Hence asymptotic behavior of
those solutions could not be studied precisely. Tadie studied the asymptotic behavior by letting the flux constant $\mu$ diverge. Moreover, in \cite{Ta2}, the domain was required to be away from the $z$-axis. This requirement avoids the singularity of the operator $\mathcal{L}$. However, it seems to rule out the situation of primary interest in the study of vortex rings.
  \item Turkington \cite{Tur89} constructed a two-parameter family of desingularized steady solutions $\psi^{\varepsilon,\alpha}$ for
\begin{equation*}
  f(t)=t_+, \ \ g^{\varepsilon,\alpha}(t)=\varepsilon \alpha \chi_{_{\{t>0\}}},
\end{equation*}
where $\varepsilon>0$ and $\alpha\ge 0$. Although $g^{\varepsilon,\alpha}$ here depends on the parameter $\varepsilon$, we note that our approach apply as well without any significant changes. Turkington has remarked in \cite{Tur89} for some possible generalizations of his results. However, the conditions therein seem to be quite involved. Our present work may be thought of as an extension of Turkington’s work. We greatly refined his results here. Our conditions differ from his and may contain a wider class of solutions. Moreover, the method we adopt here is different from his. Finally, as discussed in \cite{Tur89}, it is also interesting to investigate the asymptotic behavior of solutions with respect to two parameters. We leave these extensions to the reader.
\end{itemize}

\end{remark}

Comparing with known results, we provide a broader class of vortex rings. Our work actually dose shed a light on the study for this problem.

Having constructed the above vortex rings, we are interested in their stability/nonstability properties. Very recently, Choi \cite{Choi} studied nonlinear stability of Hill's spherical vortex in axi-symmetric perturbations.  We refer the reader to \cite{Choi, Lif, Pro0, Pro} and the references therein for some discussions in this direction. In addition, the (local) uniqueness of the solutions from the above theorem seems to be a difficult open problem. As far as we know, only some extremely special cases have been proved in this direction; see \cite{AF, AF2, Fra}. These topics is the aim of ongoing work.

The time evolution of vortex rings is also a matter of concern. An outstanding open question is the vortex filament conjecture, which states that one can find solutions of the Euler equations for which the vorticity remains close for a significant period of time to a given curve evolving by binormal curvature flow. We refer the reader to \cite{Bene, But, Dav2, Je} and references therein for some works on this problem.

The Bragg-Hawthorne equation also plays an important role in the study of vortex breakdown, see, e.g., \cite{BS, Lu2001, WR}). Our results may also enhance our understanding of axisymmetric vortex breakdown. Finally, we remark that there is a similar situation with similar results in the study of vortex pairs
for the two-dimensional Euler equation; see, e.g., \cite{ CLW, Cao1, Cao3, Dav1, Sm, Tur83}.

\section{Vortex Rings in $\mathbb{R}^3$ and Proof of Theorem \ref{thm1}}
In this section, we will provide the proof of Theorem \ref{thm1}. For the sake of clarity, we split the proof into several lemmas.

Throughout the sequel we shall use the following notations: Let $\Pi=\{(r,z)~|~r>0, z\in\mathbb{R}\}$ denote a meridional half-plane ($\theta$=constant); $B_\delta(y)$ denotes an open ball in $\mathbb{R}^2$ of center $y$ and radius $\delta>0$; Let $\chi_{_A}$ denote the characteristic function of $A\subseteq\mathbb{R}^2$; $h_+$ denotes the positive parts of $h$; Lebesgue measure on $\mathbb{R}^N$ is denoted $\textit{m}_N$, and is to be understood as the measure defining any $L^p$ space, $W^{1,p}$ space and $W^{2,p}$ space, except when stated otherwise; $\nu$ denotes the measure on $\Pi$ having density $r$ with respect to $\textit{m}_2$ and $|\cdot|$ denotes the $\nu$-measure.

Let $K(r,z,r',z')$ be the Green's function of $\mathcal{L}$ in $\Pi$, with respect to zero Dirichlet data and measure $\nu$ (see, e.g., \cite{BB, BF1, Ta}). Then
\begin{equation*}
   K(r,z,r',z')=\frac{rr'}{4\pi}\int_{-\pi}^{\pi}\frac{\cos\theta'd\theta'}{[(z-z')^2+r^2+r'^2-2rr'\cos\theta']^\frac{1}{2}}.
\end{equation*}
One can easily show that the operator $\mathcal{K}$ is an integral operator with kernel $K(r,z,r',z')$ for the case considered here. We shall use this Green's representation formula directly without further explanation.

To begin with, we need some estimates for the Green’s function $K$. From \cite{Ta}, we have
\begin{lemma}\label{le1}
 Let
\begin{equation}\label{2-1}
 \sigma=[(r-r')^2+(z-z')^2]^\frac{1}{2}/(4rr')^\frac{1}{2},
\end{equation}
then for all $\sigma > 0$
\begin{equation}\label{Tadiewrong}
  0<K(r,z,r',z')\leq\frac{(rr')^\frac{1}{2}}{4\pi}\sinh^{-1}(\frac{1}{\sigma}).
\end{equation}
\end{lemma}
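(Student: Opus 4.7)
The approach starts by simplifying the denominator in the integral defining $K$. Using $r^{2}+r'^{2}-2rr'\cos\theta' = (r-r')^{2}+4rr'\sin^{2}(\theta'/2)$, one rewrites
$$
(z-z')^{2}+r^{2}+r'^{2}-2rr'\cos\theta' \;=\; 4rr'\bigl[\sigma^{2}+\sin^{2}(\theta'/2)\bigr],
$$
so that
$$
K(r,z,r',z') \;=\; \frac{\sqrt{rr'}}{8\pi}\int_{-\pi}^{\pi}\frac{\cos\theta'\,d\theta'}{\sqrt{\sigma^{2}+\sin^{2}(\theta'/2)}}.
$$
This places $\sigma$ in the role of the intrinsic ``axisymmetric distance'' and reduces both claims to estimating a single angular integral.

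For the strict positivity $K>0$, I would appeal to the maximum principle for $\mathcal{L}$ on $\Pi$ with zero Dirichlet data on $\partial\Pi$ (equivalently, $-\Delta_{5}/r$ in the five-dimensional axisymmetric realization), which gives strict positivity of the Green's function in the interior. A purely computational alternative is to split the numerator using the identity $\cos\theta' = (1+2\sigma^{2})-2[\sigma^{2}+\sin^{2}(\theta'/2)]$; the first piece contributes a positive multiple of $\int d\theta'/\sqrt{\sigma^{2}+\sin^{2}(\theta'/2)}$, and a direct comparison shows the subtracted piece is strictly smaller.

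For the upper bound, after the change of variable $u=\theta'/2$ and the crude estimate $\cos\theta'\le 1$, the problem reduces to bounding
$$
\int_{-\pi/2}^{\pi/2}\frac{du}{\sqrt{\sigma^{2}+\sin^{2}u}}.
$$
Applying the elementary inequality $|\sin u|\ge 2|u|/\pi$ valid on $[-\pi/2,\pi/2]$ yields
$$
\int_{-\pi/2}^{\pi/2}\frac{du}{\sqrt{\sigma^{2}+\sin^{2}u}}\;\le\;\int_{-\pi/2}^{\pi/2}\frac{du}{\sqrt{\sigma^{2}+(2u/\pi)^{2}}},
$$
and the substitution $v=2u/\pi$ turns this into the explicit integral $\pi\int_{-1}^{1}dv/\sqrt{\sigma^{2}+v^{2}} = 2\pi\sinh^{-1}(1/\sigma)$, giving a bound of the required form $C\sqrt{rr'}\,\sinh^{-1}(1/\sigma)$.

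The main obstacle is the precise numerical constant. The naive computation above produces a constant on the order of $1/4$ rather than the $1/(4\pi)$ appearing in the stated bound, and indeed the equation label used in the source suggests that this version of Tadie's estimate should be read with some caution. To obtain the sharper constant one must either exploit the actual sign of $\cos\theta'$ — writing $\cos\theta' = 1-2\sin^{2}(\theta'/2)$ and tracking the cancellation between the two resulting integrals — or compare directly with the exact elliptic-integral representation. Fortunately, for the subsequent analysis only the qualitative behaviour $K = O\bigl(\sqrt{rr'}\,\log(1/\sigma)\bigr)$ near the diagonal is used, so a slightly larger constant in front of $\sinh^{-1}(1/\sigma)$ does not affect later arguments.
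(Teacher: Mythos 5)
The paper does not prove this lemma at all: it is quoted verbatim from Tadie \cite{Ta} and used as a black box, so there is no in-paper argument to compare yours against. Judged on its own, your reduction is correct: $r^2+r'^2-2rr'\cos\theta'=(r-r')^2+4rr'\sin^2(\theta'/2)$ does give $K=\frac{\sqrt{rr'}}{8\pi}\int_{-\pi}^{\pi}\cos\theta'\,[\sigma^2+\sin^2(\theta'/2)]^{-1/2}d\theta'$, and positivity is genuine, though your ``direct comparison'' for the computational route is not actually immediate (the crude bound $\sqrt{\sigma^2+\sin^2(\theta'/2)}\le(\sigma^2+1)[\sigma^2+\sin^2(\theta'/2)]^{-1/2}$ loses exactly the factor needed). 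The clean elementary argument is to pair $\theta'$ with $\pi-\theta'$: the integral equals $\int_0^{\pi/2}\cos\theta'\big([\sigma^2+\sin^2(\theta'/2)]^{-1/2}-[\sigma^2+\cos^2(\theta'/2)]^{-1/2}\big)d\theta'$, whose integrand is positive. The maximum-principle route you offer as primary is also fine. In the upper bound there is a small slip: the substitution $v=2u/\pi$ gives $\frac{\pi}{2}\int_{-1}^{1}dv/\sqrt{\sigma^2+v^2}=\pi\sinh^{-1}(1/\sigma)$, not $2\pi\sinh^{-1}(1/\sigma)$; this only improves your constant to $1/4$.

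The real point, which you correctly sensed but should state outright, is that the inequality as written is \emph{false}: no proof of the constant $\frac{1}{4\pi}$ exists. By the paper's own expansion in Lemma \ref{le2}, $K=\frac{\sqrt{rr'}}{2\pi}\log\frac{1}{\sigma}+O(\sqrt{rr'})$ as $\sigma\to0^+$, which eventually exceeds $\frac{\sqrt{rr'}}{4\pi}\sinh^{-1}(1/\sigma)\sim\frac{\sqrt{rr'}}{4\pi}\log\frac{1}{\sigma}$. The sharp elementary bound, and the one the authors actually use in Lemma \ref{le12}, has constant $\frac{1}{2\pi}$ and follows exactly along the line you gesture at: on $[0,\pi]$ one has $\cos\theta'\le\cos(\theta'/2)$, and the substitution $u=\sin(\theta'/2)$ evaluates the majorizing integral in closed form,
\begin{equation*}
\int_0^{\pi}\frac{\cos(\theta'/2)}{2\sqrt{\sigma^2+\sin^2(\theta'/2)}}\,d\theta'=\int_0^1\frac{du}{\sqrt{\sigma^2+u^2}}=\sinh^{-1}\Big(\frac{1}{\sigma}\Big),
\end{equation*}
whence $K\le\frac{\sqrt{rr'}}{2\pi}\sinh^{-1}(1/\sigma)$. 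This is consistent with the authors' own equation label and does not damage the paper: in Lemma \ref{le8} the bound enters only through a term $O(\sinh^{-1}(\varepsilon^{-\gamma})/\log\frac{1}{\varepsilon})=O(\gamma)$ which is sent to zero anyway, so the lost factor of $2$ is harmless. Your closing observation that only the qualitative $\sqrt{rr'}\log(1/\sigma)$ behaviour is needed downstream is exactly right; I would just replace ``should be read with some caution'' with the explicit counterexample and the corrected constant.
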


We have the following expansion, see also \cite{Bene}.
\begin{lemma}\label{le2}
  Let $\sigma$ be defined by \eqref{2-1}, then there exists a continuous function $l\in L^{\infty}(\Pi\times \Pi)$ such that
\begin{equation}\label{2-2}
   K(r,z,r',z')=\frac{\sqrt{rr'}}{2\pi}\log{\frac{1}{\sigma}}+\frac{\sqrt{rr'}}{2\pi}\log(1+\sqrt{\sigma^2+1})+l(r,z,r',z')\sqrt{rr'},\ \ \text{in}\ \Pi \times \Pi.
\end{equation}
\end{lemma}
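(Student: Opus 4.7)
The plan is to reduce the angular integral in the formula for $K$ from Lemma~\ref{le1} to an elementary integral via the half-angle substitution, and then isolate the logarithmic singularity explicitly. Using $1-\cos\theta'=2\sin^2(\theta'/2)$, the denominator in $K$ becomes
\[
(z-z')^2+r^2+r'^2-2rr'\cos\theta'=4rr'\bigl(\sigma^2+\sin^2(\theta'/2)\bigr).
\]
Setting $u=\theta'/2$ and exploiting evenness together with $\cos\theta'=1-2\sin^2 u$, the formula for $K$ reduces to
\[
K(r,z,r',z')=\frac{\sqrt{rr'}}{2\pi}\bigl[A(\sigma)-2B(\sigma)\bigr],\quad A(\sigma)=\int_0^{\pi/2}\frac{du}{\sqrt{\sigma^2+\sin^2 u}},\ B(\sigma)=\int_0^{\pi/2}\frac{\sin^2 u\,du}{\sqrt{\sigma^2+\sin^2 u}}.
\]

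The singularity lives entirely in $A$. With the substitution $s=\sin u$ and the algebraic identity $(1-s^2)^{-1/2}-1=s^2/[\sqrt{1-s^2}(1+\sqrt{1-s^2})]$,
\[
A(\sigma)=\int_0^1\frac{ds}{\sqrt{s^2+\sigma^2}\sqrt{1-s^2}}=\int_0^1\frac{ds}{\sqrt{s^2+\sigma^2}}+\int_0^1\frac{s^2\,ds}{\sqrt{s^2+\sigma^2}\,\sqrt{1-s^2}\,(1+\sqrt{1-s^2})}.
\]
The first integral evaluates exactly to $\sinh^{-1}(1/\sigma)$, and the elementary identity $\sinh^{-1}(1/\sigma)=\log(1/\sigma)+\log(1+\sqrt{1+\sigma^2})$ produces the two explicit logarithmic terms appearing in~\eqref{2-2}. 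Defining
\[
l(r,z,r',z'):=\frac{1}{2\pi}\bigl[A(\sigma)-\sinh^{-1}(1/\sigma)-2B(\sigma)\bigr],
\]
it remains to verify that $l$ is a continuous, bounded function on $\Pi\times\Pi$.

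For continuity and boundedness, note that $\sigma$ depends continuously on $(r,z,r',z')\in\Pi\times\Pi$. The remainder integrand in $A(\sigma)-\sinh^{-1}(1/\sigma)$ is dominated uniformly in $\sigma$ by $(1-s^2)^{-1/2}$ near $s=1$ (integrable) and by $s$ near $s=0$; for $B$ we use $\sin^2 u/\sqrt{\sigma^2+\sin^2 u}\le|\sin u|$, so $B(\sigma)\le\pi/2$. Dominated convergence then shows both expressions are continuous functions of $\sigma\in[0,+\infty)$; one also checks the limit as $\sigma\to 0^+$ (giving a finite value, since the remainder integral converges to $\log 2$) and as $\sigma\to+\infty$ (giving $0$, consistent with the identity $\log(1/\sigma)+\log(1+\sqrt{1+\sigma^2})=\sinh^{-1}(1/\sigma)\to 0$).

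The main obstacle is the clean algebraic splitting of $A(\sigma)$ isolating $\sinh^{-1}(1/\sigma)$: once that is in hand, separating out $\log(1/\sigma)+\log(1+\sqrt{1+\sigma^2})$ is automatic from the definition of $\sinh^{-1}$, and the boundedness of $l$ reduces to routine integral estimates. A secondary point worth checking is that the splitting chosen is such that neither remaining piece blows up at either end $\sigma\to 0^+$ or $\sigma\to+\infty$; the decomposition above was designed so that this happens without further work.
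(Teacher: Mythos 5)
Your proof is correct and follows essentially the same route as the paper: both reduce the kernel via the half-angle identity to an integral over $\sqrt{\sigma^2+\sin^2(\theta'/2)}$, peel off the piece that integrates exactly to $\sinh^{-1}(1/\sigma)=\log(1/\sigma)+\log(1+\sqrt{1+\sigma^2})$, and bound the remainder uniformly in $\sigma$. The only difference is cosmetic — the paper splits the numerator as $\cos\theta'=\cos(\theta'/2)+(\cos\theta'-\cos(\theta'/2))$ while you substitute $s=\sin u$ and split the Jacobian — and your verification of continuity and boundedness of $l$ is, if anything, more explicit than the paper's.
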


\begin{proof}
 We have
\begin{equation}\label{2-3}
\begin{split}
    K(r,z,r',z') & =\frac{rr'}{2\pi}\int_{0}^{\pi}\frac{\cos\theta'd\theta'}{[(z-z')^2+r^2+r'^2-2rr'\cos\theta']^\frac{1}{2}}\\
                 & =\frac{\sqrt{rr'}}{2\pi}\int_{0}^{\pi}\frac{\cos\theta'd\theta'}{\{[(z-z')^2+(r-r')^2]/(rr')+2(1-\cos\theta')\}^\frac{1}{2}}\\
                 & =\frac{\sqrt{rr'}}{2\pi}\int_{0}^{\pi}\frac{[\cos\theta'/2+(\cos\theta'-\cos\theta'/2)]d\theta'}{[4\sigma^2+4(\sin\theta'/2)^2]^\frac{1}{2}}.
\end{split}
\end{equation}
Notice that
\begin{equation}\label{2-4}
  \int_{0}^{\pi}\frac{\cos\theta'/2d\theta'}{[4\sigma^2+4(\sin\theta'/2)^2]^\frac{1}{2}}=\log(1+\sqrt{\sigma^2+1})+\log{\frac{1}{\sigma}},
\end{equation}
and
\begin{equation}\label{2-5}
  \int_{0}^{\pi}\frac{|\cos\theta'-\cos\theta'/2|d\theta'}{[4\sigma^2+4(\sin\theta'/2)^2]^\frac{1}{2}}\le \text{const.}<+\infty.
\end{equation}
From \eqref{2-3},\eqref{2-4} and \eqref{2-5}, $\eqref{2-2}$ clearly follows.
\end{proof}

\subsection{Variational problem} Our focus is on the asymptotics of solutions of
\begin{equation}\label{aim2}
  \mathcal{L}\psi^\varepsilon=\frac{1}{\varepsilon^2}\left(g(\psi^\varepsilon)+\frac{f(\psi^\varepsilon)}{r^2}\right)=\zeta^\varepsilon,\ \ \ \ \   \int_\Pi \zeta^\varepsilon d\nu=\kappa.
\end{equation}
Roughly speaking, there are two methods to study the problem of steady vortex rings, which are the stream-function method and the vorticity method. The stream-function method is to find a solution of \eqref{aim2} with the desired properties; see, e.g., \cite{Abe, AS, DV, BF1, Ni, Ta, Ta2, Yang2}. The vorticity method is to solve a variational problem for the potential vorticity $\zeta$; see, e.g., \cite{BB, Benj, Cao2, FT, Tur89}. In contrast with the stream-function method, the vorticity method has strong physical motivation. Mathematically, the vorticity method can be regarded as a dual variational principle; see \cite{Am, Ber, Cao3, ET, St} for example.

 In this paper, we consider this problem by using an improved vorticity method.

For fixed $W>0$ and $\kappa>0$, we consider the energy functional given by
\begin{equation*}
  \mathcal{E}_\varepsilon(\zeta)=\frac{1}{2}\int_\Pi{\zeta \mathcal{K}\zeta}d\nu-\frac{{W}}{2}\log{\frac{1}{\varepsilon}}\int_{\Pi}r^2\zeta d\nu-\frac{1}{\varepsilon^2}\int_\Pi J(r,\varepsilon^2\zeta)d\nu.
\end{equation*}

Let $$D:=\{(r,z)\in \Pi~|~\frac{r_*}{2}<r<2r_*,\  -1<z<1\}, \ \ \ \text{where}\   ~r_*:=\frac{\kappa}{4\pi W},$$
and
\begin{equation*}
\mathcal{A}_{\varepsilon,\Lambda}=\{\zeta\in L^\infty(\Pi)~|~ 0\le \zeta \le \frac{\Lambda}{\varepsilon^2}~ \text{a.e.}, \int_{\Pi}\zeta d\nu \le\kappa,~ supp(\zeta)\subseteq D \},
\end{equation*}
 where $\varepsilon>0$ is a small parameter, $g(0^+)=\lim_{t\to 0^+}g(t)$ and $\Lambda>\max\{1,g(0^+)\}$ is an appropriate constant (whose value will be determined later, see Lemma \ref{le14} below). We will seek maximizers of $\mathcal{E}_\varepsilon$ relative to $\mathcal{A}_{\varepsilon,\Lambda}$.

For any $\zeta\in \mathcal{A}_{\varepsilon, \Lambda}$, one can easily see that $\mathcal{K}\zeta \in W^{2,q}_{\text{loc}}(\Pi)$ for any $q>1$.

Let ${\zeta}^*$ be the Steiner symmetrization of $\zeta$ with respect to the line $z=0$ in $\Pi$ (see Appendix I of \cite{BF1}).

An absolute maximum for $\mathcal{E}_\varepsilon$ over $\mathcal{A}_{\varepsilon, \Lambda}$  can be easily found.

\begin{lemma}\label{le3}
There exists $\zeta=\zeta^{\varepsilon, \Lambda} \in \mathcal{A}_{\varepsilon,\Lambda}$ such that
\begin{equation*}
 \mathcal{E}_\varepsilon(\zeta^{\varepsilon, \Lambda})= \max_{\tilde{\zeta} \in \mathcal{A}_{\varepsilon,  \Lambda}}\mathcal{E}_\varepsilon(\tilde{\zeta})<+\infty.
\end{equation*}
Moreover, $\zeta^{\varepsilon,\Lambda}=(\zeta^{\varepsilon, \Lambda})^*$.
\end{lemma}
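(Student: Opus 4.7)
The plan is to apply the direct method of the calculus of variations to $\mathcal{E}_\varepsilon$ in the weak-$*$ topology on $L^\infty(\Pi)$. First I would observe that $\mathcal{A}_{\varepsilon,\Lambda}$ is nonempty (e.g.\ $\zeta\equiv 0$) and its members are uniformly bounded in $L^\infty$ with support in the fixed bounded set $D$, so $\mathcal{A}_{\varepsilon,\Lambda}$ is weak-$*$ sequentially compact by Banach--Alaoglu. Each of the three defining constraints (the pointwise bounds, the mass bound $\int_\Pi\zeta\,d\nu\le\kappa$, and the support condition) passes to weak-$*$ limits by testing against constants, $\chi_D$, and cut-offs supported away from $\overline D$, so $\mathcal{A}_{\varepsilon,\Lambda}$ is also weak-$*$ closed. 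Moreover $\mathcal{E}_\varepsilon$ is bounded above on $\mathcal{A}_{\varepsilon,\Lambda}$: the last two terms are non-positive since $J\ge 0$, $r^2\zeta\ge 0$, and $\log(1/\varepsilon)>0$ for small $\varepsilon$, while the quadratic term is uniformly bounded using the logarithmic estimate of Lemma \ref{le2} together with the $L^\infty$-bound and fixed compact support of admissible $\zeta$.

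Next I would pick a maximizing sequence $\{\zeta_n\}\subset\mathcal{A}_{\varepsilon,\Lambda}$ and, after passing to a subsequence, assume $\zeta_n\rightharpoonup^*\bar\zeta\in\mathcal{A}_{\varepsilon,\Lambda}$. The three terms of $\mathcal{E}_\varepsilon$ are handled separately. For the quadratic term, standard elliptic regularity for $\mathcal{K}$ yields $\mathcal{K}\zeta_n\in W^{2,q}_{\mathrm{loc}}(\Pi)$ with bounds uniform in $n$ on any bounded neighborhood of $\overline D$; Sobolev embedding and Arzel\`a--Ascoli then give $\mathcal{K}\zeta_n\to\mathcal{K}\bar\zeta$ uniformly on $\overline D$, and pairing this strong convergence with the weak-$*$ convergence of $\zeta_n$ shows $\int\zeta_n\mathcal{K}\zeta_n\,d\nu\to\int\bar\zeta\mathcal{K}\bar\zeta\,d\nu$. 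The linear term $\int r^2\zeta\,d\nu$ converges because $r^2\chi_D\in L^1(\Pi,dm_2)$. Finally, $I(r,\cdot)$ is convex in its second argument by $(a2)$, so its Legendre transform $J(r,\cdot)$ is convex, which makes $\zeta\mapsto\int_\Pi J(r,\varepsilon^2\zeta)\,d\nu$ a convex integral functional and hence weakly lower semicontinuous by the classical Tonelli--Mazur theorem; thus $-\int_\Pi J(r,\varepsilon^2\zeta)\,d\nu$ is weakly upper semicontinuous. Combining these three facts gives $\mathcal{E}_\varepsilon(\bar\zeta)\ge\limsup_n\mathcal{E}_\varepsilon(\zeta_n)$, so $\bar\zeta$ attains the supremum, and finiteness of the supremum follows from the a priori upper bound established above.

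For the Steiner-symmetry conclusion, I would exploit the fact that $D$ is symmetric about the line $z=0$, so Steiner symmetrization with respect to this line maps $\mathcal{A}_{\varepsilon,\Lambda}$ into itself (the pointwise bounds, mass, and support are preserved). Since $r^2$ and $J(r,\cdot)$ depend only on $r$ and the symmetrization preserves the distribution of $\bar\zeta(r,\cdot)$ at each fixed $r$, the second and third integrals of $\mathcal{E}_\varepsilon$ are invariant. By Lemma \ref{le2}, $K(r,z,r',z')$ depends on $(z,z')$ only through $|z-z'|$ and is strictly decreasing in that quantity, so a slice-wise Riesz rearrangement inequality (applied in $z,z'$ with outer measure $r\,dr\cdot r'\,dr'$) gives $\int\bar\zeta\mathcal{K}\bar\zeta\,d\nu\le\int\bar\zeta^*\mathcal{K}\bar\zeta^*\,d\nu$. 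Hence $\mathcal{E}_\varepsilon(\bar\zeta^*)\ge\mathcal{E}_\varepsilon(\bar\zeta)$, and setting $\zeta^{\varepsilon,\Lambda}:=\bar\zeta^*$ furnishes a maximizer satisfying $\zeta^{\varepsilon,\Lambda}=(\zeta^{\varepsilon,\Lambda})^*$. The main technical point I anticipate is the uniform convergence $\mathcal{K}\zeta_n\to\mathcal{K}\bar\zeta$ on $\overline D$; the logarithmic singularity of $K$ must not be allowed to obstruct equicontinuity, but this is controlled by the $W^{2,q}$-regularity of $\mathcal{K}\zeta_n$ on neighborhoods of $\overline D$. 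Everything else is a routine application of convexity-based lower semicontinuity and a slice-wise Riesz rearrangement.
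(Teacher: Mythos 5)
Your proposal is correct and follows essentially the same route as the paper: the direct method (weak compactness of $\mathcal{A}_{\varepsilon,\Lambda}$, continuity of the quadratic term via compactness of $\mathcal{K}$, convexity-based upper semicontinuity of the remaining terms) combined with Steiner symmetrization in $z$. The only cosmetic difference is that the paper symmetrizes the maximizing sequence and passes to the limit, citing \cite{BB} for the continuity of $\int\zeta\mathcal{K}\zeta\,d\nu$ and for the symmetrization step, whereas you symmetrize the limiting maximizer afterwards and argue the quadratic-term continuity and the Riesz rearrangement step by hand; both orderings rest on the same facts (in particular the monotonicity of $K(r,z,r',z')$ in $|z-z'|$, which is classical and is the content of Appendix I of \cite{BF1} rather than an immediate consequence of Lemma \ref{le2}).
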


\begin{proof}
We may take a sequence $\{\zeta_{k}\}\subset \mathcal{A}_{_{\varepsilon,\Lambda}}$ such that as $j\to +\infty$
\begin{equation*}
  \begin{split}
        \mathcal{E}_\varepsilon(\zeta_{k}) & \to \sup\{\mathcal{E}_\varepsilon(\tilde{\zeta})~|~\tilde{\zeta}\in \mathcal{A}_{_{\varepsilon,\Lambda}}\}, \\
        \zeta_{k} & \to \zeta\in L^{12/7}({\Pi,\nu})~~\text{weakly}.
  \end{split}
\end{equation*}
It is easy to verify that $\zeta\in \mathcal{A}_{_{\varepsilon,\Lambda}}$. Using the standard arguments (see \cite{BB}), we may assume that $\zeta_{k}=(\zeta_{k})^*$, and hence $\zeta=\zeta^*$. By Lemma 2.12 of \cite{BB}, we first have
\begin{equation*}
      \lim_{k\to +\infty}\int_\Pi{\zeta_{k} \mathcal{K}\zeta_{k}}d\nu = \int_\Pi{\zeta \mathcal{K}\zeta}d\nu,\ \text{as}\ k\to +\infty.
\end{equation*}
On the other hand, we have the lower semicontinuity of the rest of terms, namely,
\begin{equation*}
  \begin{split}
    \liminf_{k\to +\infty} \int_{\Pi}r^2\zeta_k d\nu  & \ge  \int_{\Pi}r^2\zeta d\nu, \\
    \liminf_{k\to +\infty}\int_\Pi J(r,\varepsilon^2\zeta_k)d\nu   & \ge \int_\Pi J(r,\varepsilon^2\zeta)d\nu.
  \end{split}
\end{equation*}
Consequently, we conclude that $ \mathcal{E}_\varepsilon(\zeta)=\lim_{k\to +\infty} \mathcal{E}_\varepsilon(\zeta_k)=\sup \mathcal{E}_\varepsilon$, with $\zeta\in \mathcal{A}_{_{\varepsilon, \Lambda}}$, which completes the proof.
\end{proof}

The following lemma gives the profile of $\zeta^{\varepsilon, \Lambda}$, which is useful for the rest of the analysis.
\begin{lemma}\label{le4}
Let $\zeta^{\varepsilon, \Lambda}$ be a maximizer as in Lemma \ref{le3}, then there exists a Lagrange multiplier $\mu^{\varepsilon, \Lambda} \ge 0$ such that
\begin{equation}\label{2-6}
\zeta^{\varepsilon, \Lambda}=\frac{1}{ \varepsilon^2}i\left(r,\psi^{\varepsilon, \Lambda}\right){\chi}_{_{A_{\varepsilon, \Lambda}}}+\frac{\Lambda}{\varepsilon^2}{\chi}_{_{B_{\varepsilon, \Lambda}}} \ \ a.e.\  \text{in}\  D,
\end{equation}
where
\begin{equation}\label{2-7}
 \psi^{\varepsilon, \Lambda}=\mathcal{K}\zeta^{\varepsilon, \Lambda}-\frac{Wr^2}{2}\log{\frac{1}{\varepsilon}}-\mu^{\varepsilon,\Lambda},
\end{equation}
and
\begin{equation*}
\begin{split}
   A_{\varepsilon,\Lambda}:= &  \big{\{}(r,z)\in \Pi \mid 0<\psi^{\varepsilon, \Lambda}<\partial_sJ(r,\Lambda)\big{\}},   \\
      B_{\varepsilon, \Lambda}:= &  \big{\{}(r,z)\in \Pi \mid \psi^{\varepsilon, \Lambda}\ge \partial_sJ(r,\Lambda)\big{\}}.
\end{split}
\end{equation*}
 Furthermore, whenever $\mathcal{E}_\varepsilon(\zeta^{\varepsilon, \Lambda})>0$ and $\mu^{\varepsilon, \Lambda}>0$ there holds $\int_D\zeta^{\varepsilon, \Lambda} d\nu=\kappa$.
\end{lemma}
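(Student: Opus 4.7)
The plan is the standard first-variation argument for a constrained convex maximization, adapted to the non-smooth dual term $\int J(r,\varepsilon^2\zeta)\,d\nu$. First I would exploit convexity of $\mathcal{A}_{\varepsilon,\Lambda}$: for any test $\eta \in \mathcal{A}_{\varepsilon,\Lambda}$ the affine path $\zeta_s := (1-s)\zeta^{\varepsilon,\Lambda} + s\eta$ stays admissible for $s\in[0,1]$, so maximality forces $\tfrac{d}{ds}\mathcal{E}_\varepsilon(\zeta_s)|_{s=0^+}\le 0$. Convexity of $J(r,\cdot)$ with single-valued one-sided derivative $\partial_s J(r,\cdot)=i^{-1}(r,\cdot)$ on $[0,\infty)$ (from $(a2)$) then yields the variational inequality
\begin{equation*}
\int_\Pi F\,(\eta-\zeta^{\varepsilon,\Lambda})\,d\nu \le 0 \quad \text{for all }\eta\in\mathcal{A}_{\varepsilon,\Lambda}, \qquad F := \mathcal{K}\zeta^{\varepsilon,\Lambda} - \tfrac{Wr^2}{2}\log\tfrac{1}{\varepsilon} - \partial_s J(r,\varepsilon^2\zeta^{\varepsilon,\Lambda}).
\end{equation*}

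Since $F$ depends only on $\zeta^{\varepsilon,\Lambda}$, this says $\zeta^{\varepsilon,\Lambda}$ maximizes the \emph{linear} functional $\eta\mapsto\int F\eta\,d\nu$ over the convex set $\mathcal{A}_{\varepsilon,\Lambda}$. I would then invoke a bathtub / Neyman--Pearson argument: there exists a Lagrange multiplier $\mu^{\varepsilon,\Lambda}\ge 0$ associated to the constraint $\int \zeta\,d\nu \le\kappa$ such that $\zeta^{\varepsilon,\Lambda} = \Lambda/\varepsilon^2$ on $\{F>\mu^{\varepsilon,\Lambda}\}$, $\zeta^{\varepsilon,\Lambda}=0$ on $\{F<\mu^{\varepsilon,\Lambda}\}$, and $F=\mu^{\varepsilon,\Lambda}$ a.e.\ on $\{0<\zeta^{\varepsilon,\Lambda}<\Lambda/\varepsilon^2\}$, together with the complementary slackness identity $\mu^{\varepsilon,\Lambda}(\kappa-\int\zeta^{\varepsilon,\Lambda}\,d\nu)=0$. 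The sign $\mu^{\varepsilon,\Lambda}\ge0$ comes from testing against the decreasing variation $\zeta\mapsto(1-s)\zeta$, which stays in $\mathcal{A}_{\varepsilon,\Lambda}$.

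Defining $\psi^{\varepsilon,\Lambda}$ by \eqref{2-7}, the identity $F=\mu^{\varepsilon,\Lambda}$ becomes $\psi^{\varepsilon,\Lambda}=\partial_s J(r,\varepsilon^2\zeta^{\varepsilon,\Lambda})$. I would invert this pointwise using the Fenchel duality from $(a3)'$ (that $\partial_s J(r,\cdot)$ and $i(r,\cdot)$ are mutually inverse monotone graphs on $[0,\infty)$), obtaining $\varepsilon^2\zeta^{\varepsilon,\Lambda}=i(r,\psi^{\varepsilon,\Lambda})$ on $A_{\varepsilon,\Lambda}=\{0<\psi^{\varepsilon,\Lambda}<\partial_s J(r,\Lambda)\}$; the saturated regime $\psi^{\varepsilon,\Lambda}\ge\partial_s J(r,\Lambda)$ is precisely $B_{\varepsilon,\Lambda}$, and $\psi^{\varepsilon,\Lambda}\le\partial_s J(r,0)=0$ on the zero set. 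This delivers \eqref{2-6}. The final assertion $\int\zeta^{\varepsilon,\Lambda}\,d\nu=\kappa$ whenever $\mathcal{E}_\varepsilon>0$ and $\mu^{\varepsilon,\Lambda}>0$ is then immediate from complementary slackness, with the hypothesis $\mathcal{E}_\varepsilon>0$ only excluding the degenerate maximizer $\zeta^{\varepsilon,\Lambda}\equiv 0$. The main technical obstacle is the inversion step: because $g$ is only in $C(\mathbb{R}\setminus\{0\})$, the graph $i(r,\cdot)$ may have a jump at $t=0$, so the identification ``$\varepsilon^2\zeta=i(r,\psi)$'' must be made through the subdifferential/Legendre correspondence $(a3)'$ rather than by naive inversion.
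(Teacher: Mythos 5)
Your overall skeleton --- first variation along the affine path, reduction to maximizing the linear functional $\eta\mapsto\int F\eta\,d\nu$ over $\mathcal{A}_{\varepsilon,\Lambda}$, the bathtub principle with multiplier $\mu^{\varepsilon,\Lambda}\ge 0$ and complementary slackness, and then inversion of $\psi^{\varepsilon,\Lambda}=\partial_sJ(r,\varepsilon^2\zeta^{\varepsilon,\Lambda})$ on the intermediate set --- is exactly the paper's route. But there is a genuine gap at the point you yourself flag as ``the main technical obstacle,'' and your proposed fix does not close it. Because $i(r,\cdot)$ jumps from $0$ to $g(0^+)$ at $t=0$, the conjugate satisfies $\partial_sJ(r,s)=0$ for all $s\in[0,g(0^+)]$. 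Hence on the set $\{0<\varepsilon^2\zeta^{\varepsilon,\Lambda}\le g(0^+)\}$ the stationarity condition only tells you $\psi^{\varepsilon,\Lambda}=0$, and the Legendre/subdifferential correspondence from $(a3)'$ returns the \emph{multivalued} inverse $\varepsilon^2\zeta^{\varepsilon,\Lambda}\in[0,g(0^+)]$ there --- it cannot single out the value $i(r,0)=0$ that formula \eqref{2-6} asserts (note $A_{\varepsilon,\Lambda}$ requires $\psi^{\varepsilon,\Lambda}>0$ strictly, so \eqref{2-6} claims $\zeta^{\varepsilon,\Lambda}=0$ wherever $\psi^{\varepsilon,\Lambda}=0$). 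No amount of convex duality resolves this ambiguity.

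The missing ingredient is a PDE argument: one must show $m_2\bigl(\{0<\zeta^{\varepsilon,\Lambda}\le g(0^+)\varepsilon^{-2}\}\bigr)=0$. The paper does this by observing that this set is contained in the level set $\{\mathcal{K}\zeta^{\varepsilon,\Lambda}-\tfrac{Wr^2}{2}\log\tfrac{1}{\varepsilon}=\mu^{\varepsilon,\Lambda}\}$ of a $W^{2,p}_{\mathrm{loc}}$ function; since second derivatives of a Sobolev function vanish a.e.\ on any of its level sets and $\mathcal{L}(r^2)=0$, one gets $\zeta^{\varepsilon,\Lambda}=\mathcal{L}\bigl(\mathcal{K}\zeta^{\varepsilon,\Lambda}-\tfrac{Wr^2}{2}\log\tfrac{1}{\varepsilon}\bigr)=0$ a.e.\ on that set, contradicting $\zeta^{\varepsilon,\Lambda}>0$ there unless the set is null. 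Only after this step does the single-valued inversion ($t=\partial_sJ(r,s)$ iff $s=i(r,t)$ for $t>0$, $s>g(0^+)$) yield \eqref{2-6}. You should add this argument; the rest of your proposal, including the treatment of the final assertion via the bathtub lemma, is sound.
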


\begin{proof}
Note that we may assume $\zeta^{\varepsilon,\Lambda} \not\equiv 0$, otherwise the conclusion is obtained by letting $\mu^{\varepsilon, \Lambda}=0$.
We consider a family of variations of $\zeta^{\varepsilon, \Lambda}$
\begin{equation*}
  \zeta_{(t)}=\zeta^{\varepsilon, \Lambda}+t(\tilde{\zeta}-\zeta^{\varepsilon, \Lambda}),\ \ \ t\in[0,1],
\end{equation*}
defined for arbitrary $\tilde{\zeta}\in \mathcal{A}_{\varepsilon, \Lambda}$. Since $\zeta^{\varepsilon, \Lambda}$ is a maximizer, we have
\begin{equation*}
  \begin{split}
     0 & \ge \frac{d}{dt}\mathcal{E}_\varepsilon(\zeta_{(t)})|_{t=0^+} \\
       & =\int_{D}(\tilde{\zeta}-\zeta^{\varepsilon, \Lambda})\left[\mathcal{K}\zeta^{\varepsilon, \Lambda}-\frac{Wr^2}{2} \log{\frac{1}{\varepsilon}}-\partial_sJ(r,\varepsilon^2\zeta^{\varepsilon, \Lambda}) \right]d\nu.
  \end{split}
\end{equation*}
This implies that for any  $\tilde{\zeta}\in \mathcal{A}_{\varepsilon,\Lambda}$, there holds
\begin{equation*}
\begin{split}
    \int_{D}\zeta^{\varepsilon,\Lambda} &\left[\mathcal{K}\zeta^{\varepsilon, \Lambda}-\frac{Wr^2}{2} \log{\frac{1}{\varepsilon}}-\partial_sJ(r,\varepsilon^2\zeta^{\varepsilon, \Lambda})\right]d\nu  \\
     & \ \ \ \ \ \ \ \ \ \ \ \ \ \  \ge \int_{D}\tilde{\zeta}  \left[\mathcal{K}\zeta^{\varepsilon, \Lambda}-\frac{Wr^2}{2} \log{\frac{1}{\varepsilon}}-\partial_sJ(r,\varepsilon^2\zeta^{\varepsilon, \Lambda})\right]d\nu.
\end{split}
\end{equation*}
By an adaptation of the bathtub principle (see Lemma \ref{bathtub}), we obtain that for any point in $D$, it holds
\begin{align}\label{2-8}
\begin{cases}
    ~\mathcal{K}\zeta^{\varepsilon, \Lambda}-\frac{Wr^2}{2} \log{\frac{1}{\varepsilon}}- \mu^{\varepsilon, \Lambda} \ge &\partial_sJ(r,\varepsilon^2\zeta^{\varepsilon, \Lambda})\ \ \ \ \  \mbox{whenever}\  \zeta^{\varepsilon, \Lambda}=\frac{\Lambda}{\varepsilon^2}, \\
    ~\mathcal{K}\zeta^{\varepsilon, \Lambda}-\frac{Wr^2}{2} \log{\frac{1}{\varepsilon}}- \mu^{\varepsilon, \Lambda}= & \partial_sJ(r,\varepsilon^2\zeta^{\varepsilon, \Lambda})\ \ \ \ \  \mbox{whenever}\  0<\zeta^{\varepsilon, \Lambda}<\frac{\Lambda}{\varepsilon^2}, \\
    ~\mathcal{K}\zeta^{\varepsilon, \Lambda}-\frac{Wr^2}{2} \log{\frac{1}{\varepsilon}}- \mu^{\varepsilon, \Lambda}\le  & \partial_sJ(r,\varepsilon^2\zeta^{\varepsilon, \Lambda}) \ \ \ \ \  \mbox{whenever}\  \zeta^{\varepsilon, \Lambda}=0,
\end{cases}
\end{align}
for some $\mu^{\varepsilon, \Lambda} \ge 0$. It follows that $\{0<\zeta^{\varepsilon, \Lambda}\le g(0^+)\varepsilon^{-2}\}\subseteq\{\mathcal{K}\zeta^{\varepsilon, \Lambda}-\frac{Wr^2}{2} \log{\frac{1}{\varepsilon}}=\mu^{\varepsilon, \Lambda}\}$. Since $\mathcal{L}(\mathcal{K}\zeta^{\varepsilon, \Lambda}-\frac{Wr^2}{2} \log{\frac{1}{\varepsilon}})=\zeta^{\varepsilon, \Lambda}$ almost everywhere in $D$, we conclude that $m_2(\{0<\zeta^{\varepsilon, \Lambda}\le g(0^+)\varepsilon^{-2}\})=0$. Recall that for $t>0$ and $s>g(0^+)$, there holds
\begin{equation*}
  t= \partial_sJ(r,s)\ \ \ \text{if and only if}\ \ \ s= i(r,t).
\end{equation*}
Now the stated form $\eqref{2-6}$ follows from \eqref{2-8} immediately. Moreover, when $\mathcal{E}_\varepsilon(\zeta^{\varepsilon, \Lambda})>0$ and $\mu^{\varepsilon, \Lambda}>0$, by Lemma \ref{bathtub} we have $\int_D\zeta^{\varepsilon,\Lambda} d\nu=\kappa$. The proof is thus complete.
\end{proof}

Note that our goal is to obtain
 \begin{equation*}
   \zeta^{\varepsilon, \Lambda}=\frac{1}{ \varepsilon^2}i(r,\psi^{\varepsilon, \Lambda}),\ \ \ \ \ a.e.\  \text{in}\  \Pi,
 \end{equation*}
which follows that $\psi^{\varepsilon, \Lambda}$ satisfies the equation \eqref{aim2}. To this end, we first need to eliminate the patch part in \eqref{2-6}. This can be done by choosing $\Lambda$ carefully, see Lemma \ref{le14} below. And then we also need to show that \eqref{2-6} actually holds in the whole plane $\Pi$, not just in the domain $D$. This can be achieved by the maximum principle, see Lemma \ref{le15} below.

\subsection{Asymptotic behavior}

Our focus next is the asymptotic behavior of $\zeta^{\varepsilon, \Lambda}$ when $\varepsilon \to 0^+$. In the sequel we shall denote $C,C_1,C_2, ..., $ for positive constants independent of $\varepsilon$ and $\Lambda$. For the sake of distinction, we will denote $C^*,C_1^*,C_2^*, ..., $  for positive constants which may depend on $\Lambda$, but not on $\varepsilon$. We shall use $o_\varepsilon(\Lambda)$ to denote the number which satisfies $o_\varepsilon(\Lambda)\to0$ as $\varepsilon \to 0^+$ for each fixed $\Lambda$.

The following lemma gives a lower bound of the energy.
\begin{lemma}\label{le5}
  For any  $a\in(r_*/2,2r_*)$, there exists $C>0$ such that
\begin{equation*}
  \mathcal{E}_\varepsilon(\zeta^{\varepsilon, \Lambda})\ge \left(\frac{a\kappa^2}{4\pi}-\frac{\kappa Wa^2}{2}\right)\log{\frac{1}{\varepsilon}}-C.
\end{equation*}
\end{lemma}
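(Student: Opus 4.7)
The plan is to construct an explicit admissible test function concentrated near the point $y_a := (a,0)$ and exploit the maximality property in Lemma~\ref{le3}. Take the uniform patch
\begin{equation*}
\tilde{\zeta}_\varepsilon(r,z) = \frac{\kappa}{\pi a\varepsilon^2}\,\chi_{B_\varepsilon(y_a)}(r,z).
\end{equation*}
A direct computation using $d\nu = r\,dr\,dz$ and polar coordinates centered at $y_a$ gives $\int_\Pi \tilde\zeta_\varepsilon\, d\nu = \kappa$ exactly. Since $a \in (r_*/2, 2r_*)$, for every sufficiently small $\varepsilon$ the disc $B_\varepsilon(y_a)$ lies inside $D$, and $\tilde\zeta_\varepsilon \le \Lambda/\varepsilon^2$ provided $\Lambda \ge \kappa/(\pi a)$, which is compatible with the later choice of $\Lambda$. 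Thus $\tilde\zeta_\varepsilon \in \mathcal{A}_{\varepsilon,\Lambda}$, and Lemma~\ref{le3} yields $\mathcal{E}_\varepsilon(\zeta^{\varepsilon,\Lambda}) \ge \mathcal{E}_\varepsilon(\tilde\zeta_\varepsilon)$. It then suffices to estimate each of the three pieces of $\mathcal{E}_\varepsilon(\tilde\zeta_\varepsilon)$.

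By Lemma~\ref{le2}, on $B_\varepsilon(y_a) \times B_\varepsilon(y_a)$ one has $\sqrt{rr'} = a + O(\varepsilon)$ and $\sigma = (1+O(\varepsilon/a))|x-x'|/(2a)$; since $\sigma \to 0$ on the support, the piece $\log(1+\sqrt{\sigma^2+1})$ tends to $\log 2$ and $l\sqrt{rr'}$ is bounded, so
\begin{equation*}
K(x,x') = \frac{a}{2\pi}\log\frac{2a}{|x-x'|} + O(1) \quad \text{uniformly on the support}.
\end{equation*}
Multiplying by $rr' = a^2 + O(\varepsilon)$, and applying the classical two-disc identity
\begin{equation*}
\iint_{B_\varepsilon(y_a)\times B_\varepsilon(y_a)}\log\frac{1}{|x-x'|}\,dx\,dx' = \pi^2\varepsilon^4\log\frac{1}{\varepsilon} + O(\varepsilon^4),
\end{equation*}
one gets $\frac{1}{2}\int_\Pi\tilde\zeta_\varepsilon\,\mathcal{K}\tilde\zeta_\varepsilon\, d\nu = \frac{a\kappa^2}{4\pi}\log(1/\varepsilon) + O(1)$. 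For the momentum term, integration in polar coordinates gives $\int_\Pi r^2\tilde\zeta_\varepsilon\, d\nu = \kappa a^2 + O(\varepsilon^2)$, whence $-\tfrac{W}{2}\log(1/\varepsilon)\int_\Pi r^2\tilde\zeta_\varepsilon\, d\nu = -\tfrac{W a^2 \kappa}{2}\log(1/\varepsilon) + O(1)$. For the entropy term, $\varepsilon^2\tilde\zeta_\varepsilon = \kappa/(\pi a)$ is a bounded constant on the support and $r$ is confined to $[r_*/2, 2r_*]$, so $J(r, \varepsilon^2\tilde\zeta_\varepsilon)$ is uniformly bounded and hence $\varepsilon^{-2}\int_\Pi J(r, \varepsilon^2\tilde\zeta_\varepsilon)\, d\nu = O(1)$. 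Summing the three contributions yields the stated lower bound.

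The main obstacle is the clean extraction of the $\log(1/\varepsilon)$ factor in the self-energy: one has to verify that the sub-leading pieces coming from Lemma~\ref{le2} (the $\log(1+\sqrt{\sigma^2+1})$ and $l\sqrt{rr'}$ remainder, together with the $O(\varepsilon)$ correction when replacing $(rr')^{3/2}$ by $a^3$) contribute only $O(1)$ once integrated as a double integral against $\tilde\zeta_\varepsilon(x)\tilde\zeta_\varepsilon(x')$, so that the whole $\log(1/\varepsilon)$ factor is produced purely by the standard two-disc logarithmic integral. The momentum and entropy estimates are elementary.
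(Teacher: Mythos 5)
Your proof is correct and takes essentially the same route as the paper: the paper also tests $\mathcal{E}_\varepsilon$ against a uniform patch of $\nu$-mass $\kappa$ centered at $(a,0)$ (there of height $\varepsilon^{-2}$ and radius $\varepsilon\sqrt{\kappa/(a\pi)}$ rather than your height $\kappa/(\pi a\varepsilon^2)$ and radius $\varepsilon$) and extracts the leading $\log(1/\varepsilon)$ terms from Lemmas~\ref{le1} and~\ref{le2} exactly as you do, bounding the $J$-term by $O(1)$. The only caveat is that your normalization requires $\Lambda\ge\kappa/(\pi a)$ for admissibility in $\mathcal{A}_{\varepsilon,\Lambda}$, whereas the lemma is later invoked for every fixed $\Lambda>\max\{1,g(0^+)\}$; this is harmless and is avoided by the paper's scaling, which keeps the density at $\varepsilon^{-2}$ and adjusts the radius instead.
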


\begin{proof}
The key is to choose a suitable test function. Let $\tilde{\zeta} \in \mathcal{A}_{\varepsilon,\Lambda}$ be the characteristic function of the ball centered at $(a,0)$ with radius ${{\varepsilon}{\sqrt{\kappa /a\pi}}}$. By Lemmas $\ref{le1}$ and $\ref{le2}$, we obtain
 \begin{equation*}
\begin{split}
    \mathcal{E}_\varepsilon(\tilde{\zeta})&= \frac{1}{2}\iint_{D\times D}\tilde{\zeta}(r,z)K(r,z,r',z')\tilde{\zeta}(r',z')r'rdr'dz'drdz-{\frac{W}{2} \log{\frac{1}{\varepsilon}}}\int_{D}r^2\tilde{\zeta}d\nu\\
                    &\ \ \ \ \ -\frac{1}{\varepsilon^2}\int_D J(r, \varepsilon^2\tilde{\zeta})d\nu\\
                    &\ge \frac{a+O(\varepsilon)}{4\pi}\iint_{D\times D}\log{\frac{1}{[(r-r')^2+(z-z')^2]^{1/2}}}\tilde{\zeta}(r,z)\tilde{\zeta}(r',z')r'rdr'dz'drdz\\
                    &\ \ \ \ \ -\frac{\kappa W[a+O(\varepsilon)]^2}{2}\log{\frac{1}{\varepsilon}}-C\\
                    &\ge \left(\frac{a\kappa^2}{4\pi}-\frac{\kappa W a^2}{2}\right)\log{\frac{1}{\varepsilon}}-C.
\end{split}
\end{equation*}
Since $\zeta^{\varepsilon, \Lambda}$ is a maximizer, we have $\mathcal{E}_\varepsilon(\zeta^{\varepsilon, \Lambda})\ge \mathcal{E}_\varepsilon(\tilde{\zeta})$ and the  proof is complete.
\end{proof}

We now turn to estimate the Lagrange multiplier $\mu^{\varepsilon, \Lambda}$.

\begin{lemma}\label{le6}
There holds
  \begin{equation*}
  \begin{split}
      \mu^{\varepsilon,\Lambda} & \ge \frac{2\mathcal{E}_\varepsilon(\zeta^{\varepsilon, \Lambda})}{\kappa} +\frac{W}{2\kappa}\log\frac{1}{\varepsilon}\int_Dr^2 \zeta^{\varepsilon, \Lambda}  d\nu-|2\delta_0-1|\kappa^{-1}\int_D\partial_sJ(r,\Lambda)\zeta^{\varepsilon,\Lambda}d\nu \\
       &\ \ \ \ \  -(C+o_\varepsilon(\Lambda))\left(1+\left(\int_D \partial_sJ(r,\Lambda)\zeta^{\varepsilon,\Lambda}d\nu\right)^\frac{1}{2}\right) .
  \end{split}
  \end{equation*}
\end{lemma}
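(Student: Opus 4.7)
The proof strategy is to derive an exact identity for $\mu^{\varepsilon,\Lambda}\kappa$ from the Euler--Lagrange representation of Lemma~\ref{le4}, and then upper bound the error terms. Writing $\zeta,\psi,\mu$ for $\zeta^{\varepsilon,\Lambda},\psi^{\varepsilon,\Lambda},\mu^{\varepsilon,\Lambda}$, I would multiply $\mathcal{K}\zeta = \psi+\mu+\frac{Wr^2}{2}\log\frac{1}{\varepsilon}$ (valid on $\mathrm{supp}\,\zeta \subseteq D$) by $\zeta$, integrate, and substitute into the definition of $\mathcal{E}_\varepsilon$. Using $\int_D \zeta\,d\nu = \kappa$, which holds whenever $\mu>0$ by Lemma~\ref{le4} (the case $\mu=0$ being elementary for small $\varepsilon$ via Lemma~\ref{le5}), one obtains the identity
\begin{equation*}
\mu\kappa = 2\mathcal{E}_\varepsilon(\zeta) + \frac{W}{2}\log\frac{1}{\varepsilon}\int_D r^2\zeta\,d\nu + \frac{2}{\varepsilon^2}\int_D J(r,\varepsilon^2\zeta)\,d\nu - \int_D \zeta\psi\,d\nu.
\end{equation*}
The lemma thus reduces to an upper bound on $\int_D\zeta\psi\,d\nu - \frac{2}{\varepsilon^2}\int_D J\,d\nu$.

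Next I would split the support of $\zeta$ into the non-patch set $S := \{0<\varepsilon^2\zeta<\Lambda\}$ and the patch $P := \{\varepsilon^2\zeta=\Lambda\}$. On $S$, the Euler--Lagrange identity $\psi=\partial_s J(r,\varepsilon^2\zeta)$ combined with the Legendre relation $\varepsilon^2\zeta\psi = I(r,\psi)+J(r,\varepsilon^2\zeta)$ reduces $\zeta\psi - 2J/\varepsilon^2$ to $\frac{1}{\varepsilon^2}(I(r,\psi)-J(r,\varepsilon^2\zeta))$. Applying $(a3)$ to $I$ and $(a3)'$ to $J$ (both using $i(r,\psi)=\varepsilon^2\zeta$ on $S$) produces the pointwise estimate $\zeta\psi - 2J/\varepsilon^2 \le (2\delta_0-1)\zeta\psi + 2\delta_1\zeta$ on $S$; then the monotonicity $\psi\le\partial_s J(r,\Lambda)$ on $S$ together with a separate handling of the two signs of $2\delta_0-1$ yields $\zeta\psi - 2J/\varepsilon^2 \le |2\delta_0-1|\zeta\partial_s J(r,\Lambda) + 2\delta_1\zeta$. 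On $P$, setting $w := \psi - \partial_s J(r,\Lambda) \ge 0$ and applying the same Legendre/$(a3)$ computation at the boundary value $s=\Lambda$, $t=\partial_s J(r,\Lambda)$ bounds the ``matched'' piece $\zeta\partial_s J(r,\Lambda) - 2J(r,\Lambda)/\varepsilon^2$ analogously, leaving only the patch remainder $\int_P \zeta w\,d\nu = (\Lambda/\varepsilon^2)\int_P w\,d\nu$. Integrating,
\begin{equation*}
\int_D \zeta\psi\,d\nu - \frac{2}{\varepsilon^2}\int_D J\,d\nu \le |2\delta_0-1|\int_D \zeta\partial_s J(r,\Lambda)\,d\nu + 4\delta_1\kappa + \int_P \zeta w\,d\nu.
\end{equation*}

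For the patch remainder I would exploit the identity $\|\psi_+\|_\mathcal{H}^2 = \int_D\zeta\psi\,d\nu$, obtained by integration by parts on $\mathcal{L}\psi=\zeta$ restricted to $\{\psi>0\}$, which is compactly contained in $\Pi$ because $\psi(r,z)\to-\infty$ away from a bounded region thanks to the $-\frac{Wr^2}{2}\log\frac{1}{\varepsilon}$ term and the decay of $\mathcal{K}\zeta$. Combining this with the Sobolev embedding $\mathcal{H}\hookrightarrow L^q(D,\nu)$ for a suitable $q>2$, H\"older's inequality $\int_P w\,d\nu \le \|\psi_+\|_{L^q(D,\nu)}|P|^{1-1/q}$, and the area bound $|P|\le \varepsilon^2\kappa/\Lambda$ (from $\zeta=\Lambda/\varepsilon^2$ on $P$ and $\int_P\zeta\,d\nu\le\kappa$), a careful balancing of the exponent $q$ against the a priori growth of $\|\psi_+\|_\mathcal{H}$ yields
\begin{equation*}
\int_P \zeta w\,d\nu \le (C+o_\varepsilon(\Lambda))\kappa\left(1+\Big(\int_D \partial_s J(r,\Lambda)\zeta\,d\nu\Big)^{1/2}\right).
\end{equation*}
Substituting into the identity and dividing by $\kappa$ gives the desired lower bound on $\mu^{\varepsilon,\Lambda}$. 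The main obstacle is precisely this patch-remainder estimate: the large pre-factor $\Lambda/\varepsilon^2$ must be absorbed by the smallness $|P|\lesssim \varepsilon^2/\Lambda$ via Sobolev/H\"older, and the Sobolev exponent must be chosen so that the growth of $\|\psi_+\|_\mathcal{H}$ (no worse than $\sqrt{\log(1/\varepsilon)}$, by the other terms of the identity and Lemma~\ref{le5}) is compatible with the required $o_\varepsilon(\Lambda)$ behaviour.
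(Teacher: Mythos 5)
Your overall architecture matches the paper's: isolate $\mu\kappa$ from the definition of $\mathcal{E}_\varepsilon$ using the Euler--Lagrange relations of Lemma \ref{le4}, control the $J$-term via $(a3)$/$(a3)'$ and the monotonicity $\partial_sJ(r,\varepsilon^2\zeta)\le\partial_sJ(r,\Lambda)$, and reduce everything to an estimate for the patch contribution $\frac{\Lambda}{\varepsilon^2}\int_P(\psi-\partial_sJ(r,\Lambda))_+\,d\nu$ combined with the energy identity $\|\psi_+\|_{\mathcal{H}}^2=\int_D\zeta\psi\,d\nu$ and a quadratic self-improvement. (The paper does not split $\mathrm{supp}\,\zeta$ into $S$ and $P$ for the $J$-term --- it applies $(a3)'$ globally and absorbs the sign into $|2\delta_0-1|$ at the end --- but your Legendre-duality bookkeeping is equivalent, and your use of $\mu\int\zeta\le\mu\kappa$ versus the exact identity is immaterial.)

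The genuine gap is in the patch-remainder estimate, which you yourself flag as the main obstacle. Your proposed chain --- H\"older $\int_Pw\,d\nu\le\|\psi_+\|_{L^q}|P|^{1-1/q}$ with a subcritical Sobolev embedding $\mathcal{H}\hookrightarrow L^q$, $q>2$, plus $|P|\le C\varepsilon^2\kappa/\Lambda$ --- leaves the factor $\frac{\Lambda}{\varepsilon^2}|P|^{1-1/q}\ge c\,(\Lambda/\varepsilon^2)^{1/q}$, which diverges as $\varepsilon\to0^+$ for every fixed $q<\infty$; taking $q=\infty$ is unavailable since $\mathcal{H}$ (an $H^1$-type space in two variables on $D$) does not embed in $L^\infty$. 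Letting $q\to\infty$ with $\varepsilon$ and invoking the Moser--Trudinger growth of the embedding constant still costs at least a factor $\sqrt{\log(1/\varepsilon)}$, and that loss is fatal downstream: Lemmas \ref{le13}--\ref{le14} need the right-hand side of the present lemma to be bounded \emph{uniformly in $\varepsilon$} (for fixed $\Lambda$) in order to choose $\Lambda_0$ once and kill the patch, and Lemma \ref{le16} needs the $O(1)$ error in $\mu^\varepsilon$. The paper's mechanism is different and essential: set $U=(\psi-\partial_sJ(r,\Lambda))_+$, which is supported in the patch, and use the \emph{critical} two-dimensional embedding $W^{1,1}(\mathbb{R}^2)\hookrightarrow L^2(\mathbb{R}^2)$, i.e.\ $\|U\|_{L^2}\le C(\|\nabla U\|_{L^1}+\|U\|_{L^1})$, followed by Cauchy--Schwarz on the patch, $\|\nabla U\|_{L^1}\le|P|^{1/2}\|\nabla\psi/r\|_{L^2}$. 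This produces $|P|^{1/2}\cdot|P|^{1/2}=|P|$, so the prefactor becomes $\frac{\Lambda}{\varepsilon^2}|P|\le C\kappa$ with no loss, giving $\int_P\zeta U\,d\nu\le C\|\psi_+\|_{\mathcal{H}}+o_\varepsilon(\Lambda)(\dots)$ and closing the quadratic inequality. Without this endpoint argument (or an equivalent scale-invariant substitute) your proof does not close.
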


\begin{proof}
For convenience, let us abbreviate $(\zeta^{\varepsilon,\Lambda},\psi^{\varepsilon,\Lambda}, \mu^{\varepsilon, \Lambda})$ to $(\zeta, \psi,\mu)$ here. Recall that from $(a3)'$
\begin{equation*}
   J(r,s)\ge (1-\delta_0)\partial_sJ(r,s)s-\delta_1s, \ \ \ \forall~t>0, \ \forall~0<r\le 2r_*.
\end{equation*}
 By \eqref{2-8} and $(a3)'$, we then have
\begin{equation*}
  \begin{split}
     2\mathcal{E}_\varepsilon(\zeta) &=\int_D{\zeta \mathcal{K}\zeta}d\nu-{{W}}\log{\frac{1}{\varepsilon}}\int_{D}r^2\zeta d\nu-\frac{2}{\varepsilon^2}\int_D J(r,\varepsilon^2\zeta)d\nu\\
       &= \int_D{\zeta \left(\mathcal{K}\zeta-\frac{W}{2}r^2\log{\frac{1}{\varepsilon}}-\mu\right)}d\nu-\frac{2}{\varepsilon^2}\int_D J(r,\varepsilon^2\zeta)d\nu\\
       &\ \ \ \ \ \ \ \ \ \ \ \ -{\frac{W}{2}\log{\frac{1}{\varepsilon}}}\int_{D}r^2 \zeta d\nu+\mu\int_D\zeta d\nu \\
       & \le \int_D \zeta \psi d\nu-2(1-\delta_0)\int_D\partial_sJ(r,\varepsilon^2\zeta)\zeta d\nu+2\delta_1\int_D\zeta d\nu     \\
       &\ \ \ \ \ \ \ \ \ \ \ \ -{\frac{W}{2}\log{\frac{1}{\varepsilon}}}\int_{D}r^2 \zeta d\nu+\mu\int_D\zeta d\nu \\
  \end{split}
\end{equation*}
Thus we get
\begin{equation}\label{2-10}
  2\mathcal{E}_\varepsilon(\zeta)\le \int_D \zeta \psi d\nu-2(1-\delta_0)\int_D\partial_sJ(r,\varepsilon^2\zeta)\zeta^{\varepsilon,\Lambda}d\nu-{\frac{W}{2}\log{\frac{1}{\varepsilon}}}\int_{D}r^2 \zeta d\nu +\mu\kappa+2\delta_1\kappa.
\end{equation}
If we take $\psi_+ \in \mathcal{H}$ as a test function, we then obtain
\begin{equation}\label{2-11}
  \int_\Pi \frac{|\nabla \psi_+|^2}{r^2} d\nu =\int_D \zeta \psi d\nu,
\end{equation}
Let $U:=\left(\psi-\partial_sJ(r,\Lambda)\right)_+$.
Recalling \eqref{2-8}, by H\"older's inequality and Sobolev's inequality, we have
\begin{equation}\label{2-12}
\begin{split}
 \int_D \zeta \psi d\nu & = \int_D \zeta \left(\psi-\partial_sJ(r,\Lambda)\right)_+ d\nu+\int_D\zeta \partial_sJ(r,\varepsilon^2\zeta)d\nu \\
     & \le  \frac{\Lambda}{\varepsilon^2}|\{\zeta=\Lambda \varepsilon^{-2}\}|^\frac{1}{2} \left(\int_D U^2 d\nu\right)^\frac{1}{2}+\int_D\zeta \partial_sJ(r,\varepsilon^2\zeta)d\nu \\
     & \le  \frac{C\Lambda}{\varepsilon^2}|\{\zeta=\Lambda \varepsilon^{-2}\}|^\frac{1}{2} \left(\int_D U^2 drdz\right)^\frac{1}{2}+\int_D\zeta \partial_sJ(r,\varepsilon^2\zeta)d\nu\\
     & \le  \frac{C\Lambda}{\varepsilon^2}|\{\zeta=\Lambda \varepsilon^{-2}\}|^\frac{1}{2} \left(\int_D |\nabla U|drdz+\int_D |U|drdz\right)+\int_D\zeta \partial_sJ(r,\varepsilon^2\zeta)d\nu\\
     & \le  \frac{C\Lambda}{\varepsilon^2}|\{\zeta=\Lambda \varepsilon^{-2}\}|^\frac{1}{2} \left(\int_{\{\zeta=\Lambda\varepsilon^{-2}\}}|\nabla \psi|drdz+\int_{\{\zeta=\Lambda\varepsilon^{-2}\}}|\nabla \partial_sJ(r,\Lambda)|drdz\right)\\
     &  \ \ \ \ \ \ \ \  \ \ \ \ \ \ \ \ \ \ +\frac{C\Lambda}{\varepsilon^2}|\{\zeta=\Lambda \varepsilon^{-2}\}|^\frac{1}{2}\int_{\{\zeta=\Lambda\varepsilon^{-2}\}} \psi drdz +\int_D\zeta \partial_sJ(r,\varepsilon^2\zeta)d\nu\\
     & \le  \frac{C\Lambda}{\varepsilon^2}|\{\zeta=\Lambda \varepsilon^{-2}\}| \left(\int_D \frac{|\nabla \psi|^2}{r^2} d\nu\right)^\frac{1}{2}+o_\varepsilon(\Lambda)\int_D \zeta \psi d\nu+o_\varepsilon(\Lambda)+\int_D\zeta \partial_sJ(r,\varepsilon^2\zeta)d\nu\\
     & \le C \left(\int_D \frac{|\nabla \psi_+|^2}{r^2} d\nu\right)^\frac{1}{2}+o_\varepsilon(\Lambda)\int_D \zeta \psi d\nu+o_\varepsilon(\Lambda)+\int_D\zeta \partial_sJ(r,\varepsilon^2\zeta)d\nu.
\end{split}
\end{equation}
Combining \eqref{2-11} and \eqref{2-12}, we get
\begin{equation*}
  \int_\Pi \frac{|\nabla \psi_+|^2}{r^2} d\nu\le C \left(\int_D \frac{|\nabla \psi_+|^2}{r^2} d\nu\right)^\frac{1}{2}+o_\varepsilon(\Lambda)\int_\Pi \frac{|\nabla \psi_+|^2}{r^2} d\nu+o_\varepsilon(\Lambda)+\int_D\zeta \partial_sJ(r,\varepsilon^2\zeta)d\nu
\end{equation*}
which implies
\begin{equation}\label{2-13}
  \int_D \zeta \psi d\nu\le (C+o_\varepsilon(\Lambda))\left(1+\left(\int_D\zeta \partial_sJ(r,\varepsilon^2\zeta)d\nu\right)^\frac{1}{2}\right)+\int_D\zeta \partial_sJ(r,\varepsilon^2\zeta)d\nu.
\end{equation}
Combining \eqref{2-10} and \eqref{2-13}, we then have
\begin{equation*}
\begin{split}
    2\mathcal{E}_\varepsilon(\zeta)\le  (C+o_\varepsilon(\Lambda))\left(1+\left(\int_D\zeta \partial_sJ(r,\varepsilon^2\zeta)d\nu\right)^\frac{1}{2}\right) &+(2\delta_0-1)\int_D\partial_sJ(r,\varepsilon^2\zeta)\zeta d\nu \\
      -{\frac{W}{2}\log{\frac{1}{\varepsilon}}}\int_{D}r^2 \zeta d\nu& +\mu\kappa+2\delta_1\kappa.
\end{split}
\end{equation*}
 Thus
\begin{equation*}
  \begin{split}
    2\mathcal{E}_\varepsilon(\zeta)\le  (C+o_\varepsilon(\Lambda))\left(1+\left(\int_D\zeta \partial_sJ(r,\Lambda)d\nu\right)^\frac{1}{2}\right) &+|2\delta_0-1|\int_D\partial_sJ(r,\Lambda)\zeta d\nu \\
      -{\frac{W}{2}\log{\frac{1}{\varepsilon}}}\int_{D}r^2 \zeta d\nu& +\mu\kappa+2\delta_1\kappa
\end{split}
\end{equation*}
which clearly implies the desired result.
The proof is thus complete.
\end{proof}

Combining Lemmas \ref{le5} and \ref{le6}, we immediately get the following estimate.
\begin{lemma}\label{le7}
For any  $a\in(r_*/2,2r_*)$, we have
 \begin{equation*}
  \begin{split}
      \mu^{\varepsilon,\Lambda} & \ge \left(\frac{a\kappa}{2\pi}-{Wa^2}\right)\log{\frac{1}{\varepsilon}} +\frac{W}{2\kappa}\log\frac{1}{\varepsilon}\int_Dr^2 \zeta^{\varepsilon, \Lambda}  d\nu-|2\delta_0-1|\kappa^{-1}\int_D\partial_sJ(r,\Lambda)\zeta^{\varepsilon,\Lambda}d\nu \\
       &\ \ \ \ \  -(C+o_\varepsilon(\Lambda))\left(1+\left(\int_D \partial_sJ(r,\Lambda)\zeta^{\varepsilon,\Lambda}d\nu\right)^\frac{1}{2}\right) .
  \end{split}
  \end{equation*}
\end{lemma}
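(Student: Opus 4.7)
The statement of Lemma \ref{le7} is essentially a mechanical combination of the previous two lemmas, so my plan is simply to substitute the energy lower bound from Lemma \ref{le5} into the Lagrange-multiplier estimate of Lemma \ref{le6}. The heavy lifting, namely the dual-variational estimate using $(a3)'$, the test-function choice $\psi_+$ in the Hilbert space $\mathcal{H}$, and the Sobolev/H\"older control of the patch part $\{\zeta^{\varepsilon,\Lambda}=\Lambda\varepsilon^{-2}\}$, has already been carried out in the proof of Lemma \ref{le6}; what remains is to track constants carefully.

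The concrete steps are as follows. First I take the bound from Lemma \ref{le6},
\begin{equation*}
\mu^{\varepsilon,\Lambda}\ge \frac{2\mathcal{E}_\varepsilon(\zeta^{\varepsilon,\Lambda})}{\kappa}+\frac{W}{2\kappa}\log\frac{1}{\varepsilon}\int_D r^2\zeta^{\varepsilon,\Lambda}\,d\nu-|2\delta_0-1|\kappa^{-1}\!\!\int_D\partial_sJ(r,\Lambda)\zeta^{\varepsilon,\Lambda}d\nu-(C+o_\varepsilon(\Lambda))\Bigl(1+\bigl(\textstyle\int_D\partial_sJ(r,\Lambda)\zeta^{\varepsilon,\Lambda}d\nu\bigr)^{1/2}\Bigr),
\end{equation*}
and isolate the first term $2\mathcal{E}_\varepsilon(\zeta^{\varepsilon,\Lambda})/\kappa$. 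Then I insert the lower bound of Lemma \ref{le5}, valid for any $a\in(r_*/2,2r_*)$, which gives
\begin{equation*}
\frac{2\mathcal{E}_\varepsilon(\zeta^{\varepsilon,\Lambda})}{\kappa}\ge \left(\frac{a\kappa}{2\pi}-Wa^2\right)\log\frac{1}{\varepsilon}-\frac{2C}{\kappa}.
\end{equation*}
The residual constant $-2C/\kappa$ is absorbed into the $-(C+o_\varepsilon(\Lambda))(1+\cdots)$ term (possibly enlarging $C$), yielding exactly the inequality in the statement.

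There is no serious obstacle; the only point requiring a bit of care is that the dependence of various constants on $\Lambda$ must be respected according to the convention introduced above Lemma \ref{le5} (constants $C$ denoting $\Lambda$-independent quantities, and $o_\varepsilon(\Lambda)$ denoting quantities vanishing as $\varepsilon\to 0^+$ for each fixed $\Lambda$). Since the constant coming from Lemma \ref{le5} is genuinely independent of both $\varepsilon$ and $\Lambda$, and since the test function chosen there is $\Lambda$-independent, the absorption is legitimate and the combined estimate indeed holds with the same convention.
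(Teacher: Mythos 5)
Your proposal is correct and coincides with the paper's own argument: the paper states Lemma \ref{le7} with the one-line justification ``Combining Lemmas \ref{le5} and \ref{le6}, we immediately get the following estimate,'' which is precisely the substitution of the energy lower bound of Lemma \ref{le5} into the multiplier estimate of Lemma \ref{le6}, with the residual constant $2C/\kappa$ absorbed into the $(C+o_\varepsilon(\Lambda))$ term. Your attention to the $\Lambda$-independence of the constant from Lemma \ref{le5} is appropriate but raises no issue, exactly as you note.
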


It is clear to see that $\partial_sJ(r,\Lambda)\le \partial_sJ(2r_*,\Lambda)$ for $r\le 2r_*$. Hence $\int_D \partial_sJ(r,\Lambda)\zeta^{\varepsilon,\Lambda}d\nu\le C^*$. As a consequence of Lemmas \ref{le4}, \ref{le5} and \ref{le7}, we have
\begin{corollary}
  For each fixed $\Lambda$, there holds $\int_D\zeta^{\varepsilon, \Lambda} d\nu=\kappa$ when $\varepsilon$ is sufficiently small.
\end{corollary}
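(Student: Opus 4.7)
The plan is to invoke the last assertion of Lemma \ref{le4}, which guarantees $\int_D \zeta^{\varepsilon,\Lambda} d\nu = \kappa$ as soon as both $\mathcal{E}_\varepsilon(\zeta^{\varepsilon,\Lambda}) > 0$ and $\mu^{\varepsilon,\Lambda} > 0$. For $\Lambda$ fixed, it therefore suffices to verify these two positivity conditions for all sufficiently small $\varepsilon$, and the whole argument amounts to picking a good value of $a\in(r_*/2,2r_*)$ in the preceding two lemmas.

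For the energy, I would specialize Lemma \ref{le5} with $a=r_*=\kappa/(4\pi W)$. A direct substitution gives
\[
\frac{a\kappa^{2}}{4\pi}-\frac{\kappa Wa^{2}}{2}\bigg|_{a=r_*}=\frac{\kappa^{3}}{16\pi^{2}W}-\frac{\kappa^{3}}{32\pi^{2}W}=\frac{\kappa^{3}}{32\pi^{2}W}>0,
\]
so $\mathcal{E}_\varepsilon(\zeta^{\varepsilon,\Lambda})\to+\infty$ as $\varepsilon\to 0^{+}$, and in particular $\mathcal{E}_\varepsilon(\zeta^{\varepsilon,\Lambda})>0$ for $\varepsilon$ small.

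For the Lagrange multiplier, I would apply Lemma \ref{le7} with the same choice $a=r_*$. The leading coefficient is
\[
\frac{a\kappa}{2\pi}-Wa^{2}\bigg|_{a=r_*}=\frac{\kappa^{2}}{8\pi^{2}W}-\frac{\kappa^{2}}{16\pi^{2}W}=\frac{\kappa^{2}}{16\pi^{2}W}>0.
\]
The term $\tfrac{W}{2\kappa}\log(1/\varepsilon)\int_D r^{2}\zeta^{\varepsilon,\Lambda}d\nu$ is non-negative and can be discarded from below. For the remaining terms, the observation preceding the corollary gives $\int_D\partial_sJ(r,\Lambda)\zeta^{\varepsilon,\Lambda}d\nu\le C^{*}$, where $C^{*}$ depends on $\Lambda$ but not on $\varepsilon$, so both $-|2\delta_0-1|\kappa^{-1}\int_D\partial_sJ(r,\Lambda)\zeta^{\varepsilon,\Lambda}d\nu$ and $-(C+o_\varepsilon(\Lambda))(1+(\int_D\partial_sJ(r,\Lambda)\zeta^{\varepsilon,\Lambda}d\nu)^{1/2})$ are bounded below by a constant depending only on $\Lambda$. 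Consequently
\[
\mu^{\varepsilon,\Lambda}\ge \frac{\kappa^{2}}{16\pi^{2}W}\log\frac{1}{\varepsilon}-C^{*},
\]
which is strictly positive for all $\varepsilon$ smaller than a threshold depending on $\Lambda$.

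There is no real obstacle: the argument is a routine combination of the quantitative estimates already available, and the only delicate point is that the constants arising from the $\partial_sJ(r,\Lambda)$ terms in Lemma \ref{le7} carry a $\Lambda$-dependence, which is precisely why the corollary asserts the equality for each fixed $\Lambda$ once $\varepsilon$ is small enough. A uniform-in-$\Lambda$ statement would require a more delicate choice of $\Lambda$, which is deferred to the subsequent analysis (e.g.\ Lemma \ref{le14}).
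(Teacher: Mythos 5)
Your proposal is correct and follows exactly the route the paper intends: the corollary is stated as a consequence of Lemmas \ref{le4}, \ref{le5} and \ref{le7} together with the bound $\int_D\partial_sJ(r,\Lambda)\zeta^{\varepsilon,\Lambda}d\nu\le C^*$, and you have simply filled in the arithmetic (choosing $a=r_*$, which is admissible since $r_*\in(r_*/2,2r_*)$) showing both $\mathcal{E}_\varepsilon(\zeta^{\varepsilon,\Lambda})>0$ and $\mu^{\varepsilon,\Lambda}>0$ for small $\varepsilon$, so that the last assertion of Lemma \ref{le4} applies. Your closing remark about the $\Lambda$-dependence of the threshold is also consistent with the paper's ``for each fixed $\Lambda$'' phrasing.
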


We now introduce the function $\Gamma$ as follows
\begin{equation*}
  \Gamma(t)=\frac{\kappa t}{2\pi}-Wt^2,\ t\in[0,+\infty).
\end{equation*}
Recall that $r_*=\frac{\kappa}{4\pi W}$. It is easy to check that $\Gamma(r_*)=\max_{t\in[0,+\infty)}\Gamma(t)$.

Next, we show that in order to maximize the energy, those vortices constructed above
must be concentrated as $\varepsilon$ tends to zero. We reach our goal by several steps as follows.

Let $\Theta^{\varepsilon, \Lambda}_-:=\inf\{r\mid(r,0)\in {supp}(\zeta^{\varepsilon, \Lambda})\}$,\ $\Theta^{\varepsilon, \Lambda}_+:=\sup\{r\mid(r,0)\in {supp}(\zeta^{\varepsilon, \Lambda})\}$.
\begin{lemma}\label{le8}
  For each fixed $\Lambda$, $\lim_{\varepsilon\to 0^+}\Theta^{\varepsilon, \Lambda}_-=r_*$.
\end{lemma}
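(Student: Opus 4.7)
I would prove $\Theta^{\varepsilon, \Lambda}_- \to r_*$ in four steps: an upper bound on $\mathcal{E}_\varepsilon(\zeta^{\varepsilon, \Lambda})$ giving concentration of the first and second radial moments near $r_*$; a sharp lower bound for $\mu^{\varepsilon, \Lambda}$; an exact boundary identity at the point $(\Theta^{\varepsilon, \Lambda}_-, 0)$; and a refined upper bound on $\mathcal{K}\zeta^{\varepsilon, \Lambda}$ at that point, producing a polynomial inequality that pins $\Theta^{\varepsilon, \Lambda}_-$ to $r_*$. Throughout, constants written $C^*$ may depend on $\Lambda$ but not on $\varepsilon$.

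\emph{Moment concentration.} Set $\bar a_\varepsilon := \kappa^{-1}\int_D r\,\zeta^{\varepsilon, \Lambda}\,d\nu$. Using Lemma \ref{le2} to replace $K$ by its leading logarithmic part plus bounded remainders, applying $\sqrt{rr'} \le (r+r')/2$ followed by $r \leftrightarrow r'$ symmetry, and invoking the rearrangement estimate
\[
\int_D \log\frac{1}{|x-x'|}\,\zeta^{\varepsilon, \Lambda}(x')\,d\nu' \le \kappa \log\frac{1}{\varepsilon} + C^*
\]
(a consequence of $\zeta^{\varepsilon, \Lambda} \le \Lambda/\varepsilon^2$ and $\int \zeta^{\varepsilon, \Lambda}\,d\nu = \kappa$), I obtain $\int \zeta^{\varepsilon, \Lambda}\,\mathcal{K}\zeta^{\varepsilon, \Lambda}\,d\nu \le \frac{\kappa^2 \bar a_\varepsilon}{2\pi}\log\frac{1}{\varepsilon} + C^*$. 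Combined with $\int r^2\zeta^{\varepsilon, \Lambda}\,d\nu \ge \bar a_\varepsilon^{\,2}\kappa$ (Cauchy--Schwarz) and the nonnegativity of the $J$ term,
\[
\mathcal{E}_\varepsilon(\zeta^{\varepsilon, \Lambda}) \le \frac{\kappa}{2}\Gamma(\bar a_\varepsilon)\log\frac{1}{\varepsilon} + C^*.
\]
Lemma \ref{le5} at $a = r_*$ and the identity $\Gamma(t) = W r_*^2 - W(t-r_*)^2$ then force $(\bar a_\varepsilon - r_*)^2 \le o(1)$, hence $\bar a_\varepsilon \to r_*$ and, reinserting into the above chain, $\int_D r^2 \zeta^{\varepsilon, \Lambda}\,d\nu \to r_*^2 \kappa$; in particular $\int_D (r - r_*)^2 \zeta^{\varepsilon, \Lambda}\,d\nu \to 0$.

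\emph{Boundary identity and lower bound for $\mu$.} Inserting $\kappa^{-1}\int r^2\zeta^{\varepsilon, \Lambda}\,d\nu \to r_*^2$ into Lemma \ref{le7} at $a = r_*$, and using $\Gamma(r_*) = W r_*^2$, yields $\mu^{\varepsilon, \Lambda} \ge \tfrac{3}{2} W r_*^2 \log\frac{1}{\varepsilon} - C^*$. Since $\zeta^{\varepsilon, \Lambda} \in L^\infty$ the potential $\psi^{\varepsilon, \Lambda}$ is continuous; the Steiner symmetry of $\zeta^{\varepsilon, \Lambda}$ in $z$ places $(\Theta^{\varepsilon, \Lambda}_-, 0)$ on the boundary of $\mathrm{supp}(\zeta^{\varepsilon, \Lambda})$; and the profile identity \eqref{2-8} forces $\psi^{\varepsilon, \Lambda} = 0$ there. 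Therefore
\[
\mathcal{K}\zeta^{\varepsilon, \Lambda}(\Theta^{\varepsilon, \Lambda}_-, 0) = \tfrac{W}{2}(\Theta^{\varepsilon, \Lambda}_-)^2 \log\frac{1}{\varepsilon} + \mu^{\varepsilon, \Lambda}.
\]

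\emph{Sharp upper bound on $\mathcal{K}\zeta$ and conclusion.} The decisive step is an upper bound for $\mathcal{K}\zeta^{\varepsilon, \Lambda}(\Theta^{\varepsilon, \Lambda}_-, 0)$ with the precise coefficient $\sqrt{\Theta^{\varepsilon, \Lambda}_- \, r_*}/(2\pi)$. Starting again from Lemma \ref{le2}, I write $\sqrt{r'} = \sqrt{r_*} + (\sqrt{r'} - \sqrt{r_*})$ inside the integrand and control the error by Cauchy--Schwarz, using $|\sqrt{r'}-\sqrt{r_*}|^2 \le (r'-r_*)^2/(2r_*)$ (valid since $r' \ge r_*/2$ on $D$) together with the $\log^2$-rearrangement bound $\int_D \log^2(1/|x_0-x'|)\,\zeta^{\varepsilon, \Lambda}\,d\nu' \le \kappa\log^2(1/\varepsilon) + C^*\log(1/\varepsilon)$. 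By the moment concentration of the first step the error is $o(\log(1/\varepsilon))$, and
\[
\mathcal{K}\zeta^{\varepsilon, \Lambda}(\Theta^{\varepsilon, \Lambda}_-, 0) \le \frac{\sqrt{\Theta^{\varepsilon, \Lambda}_- \, r_*}\,\kappa}{2\pi}\log\frac{1}{\varepsilon} + o\!\left(\log\frac{1}{\varepsilon}\right).
\]
Combining with the boundary identity, dividing by $\log(1/\varepsilon)$, using $\kappa = 4\pi W r_*$, and setting $t_\varepsilon := \sqrt{\Theta^{\varepsilon, \Lambda}_-/r_*}$, one arrives at $t_\varepsilon^4 - 4 t_\varepsilon + 3 \le o(1)$. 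Since $t^4 - 4t + 3 = (t-1)^2(t^2+2t+3) \ge 2(t-1)^2$ for all $t \in \mathbb{R}$, we deduce $t_\varepsilon \to 1$, i.e., $\Theta^{\varepsilon, \Lambda}_- \to r_*$. The principal obstacle is precisely this sharp coefficient: a weaker estimate such as $\sqrt{r'} \le \sqrt{2 r_*}$ leaves the resulting quartic strictly negative at $t = 1$ and does not pin the limit.
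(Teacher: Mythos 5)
Your proof is correct, but it follows a genuinely different route from the paper's. Both arguments start from the same observation that $\mathcal{K}\zeta^{\varepsilon,\Lambda}(\Theta^{\varepsilon,\Lambda}_-,0)\ge \tfrac{W}{2}(\Theta^{\varepsilon,\Lambda}_-)^2\log\tfrac1\varepsilon+\mu^{\varepsilon,\Lambda}$ and from the lower bound for $\mu^{\varepsilon,\Lambda}$ in Lemma \ref{le7}, but they diverge in how the Green's potential is estimated from above. The paper splits the integral into a far field $\{\sigma>\varepsilon^\gamma\}$, controlled by $\tfrac{\kappa r_*}{2\pi}\sinh^{-1}(\varepsilon^{-\gamma})\sim\tfrac{\kappa r_*\gamma}{2\pi}\log\tfrac1\varepsilon$, and a near field controlled by $\tfrac{\kappa r_\varepsilon}{2\pi}\log\tfrac1\varepsilon$; together with the trivial bound $\int r^2\zeta^{\varepsilon,\Lambda}\,d\nu\ge\kappa(\Theta^{\varepsilon,\Lambda}_-)^2$ this yields \eqref{2-16}, and the conclusion follows by letting $a\to r_*$, $\gamma\to 0$ and using the strict concavity of $\Gamma$. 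You instead first establish the moment concentration $\int_D(r-r_*)^2\zeta^{\varepsilon,\Lambda}\,d\nu\to0$ — which in the paper appears only \emph{after} this lemma, as Lemma \ref{le9} — directly from the energy sandwich between Lemma \ref{le5} and your upper bound $\mathcal{E}_\varepsilon\le\tfrac{\kappa}{2}\Gamma(\bar a_\varepsilon)\log\tfrac1\varepsilon+C^*$, and then feed it through Cauchy--Schwarz to obtain the sharp coefficient $\sqrt{\Theta^{\varepsilon,\Lambda}_- r_*}$ in a single global logarithmic estimate, with no $\gamma$-splitting; the endgame is the explicit factorization $t^4-4t+3=(t-1)^2(t^2+2t+3)$ rather than concavity of $\Gamma$. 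Your logical order is consistent (nothing you use depends on the statement being proved), and you correctly identify that the sharp coefficient is indispensable. The trade-off: the paper's inequality \eqref{2-16} is reused essentially verbatim to localize the support (Lemmas \ref{le11} and \ref{le12}), so the $\gamma$-splitting earns its keep later, whereas your route delivers Lemma \ref{le9} for free but would still need a near/far decomposition of that kind for the diameter estimates.

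Two small imprecisions, neither damaging. First, after substituting $\int_Dr^2\zeta^{\varepsilon,\Lambda}\,d\nu=\kappa r_*^2+o(1)$ into Lemma \ref{le7}, the error in your bound $\mu^{\varepsilon,\Lambda}\ge\tfrac32Wr_*^2\log\tfrac1\varepsilon-C^*$ is really $o(\log\tfrac1\varepsilon)$ rather than $O(1)$; this is harmless since you divide by $\log\tfrac1\varepsilon$ at the end, but it should be recorded as such. Second, the replacement $\sqrt{rr'}\le(r+r')/2$ is only monotone where $\log\tfrac1{|x-x'|}\ge0$; on the complementary region the kernel is bounded on $D$, so the discrepancy is $O(1)$, and a sentence to that effect is needed.
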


\begin{proof}
 Let $(r_\varepsilon,z_\varepsilon)\in {supp}(\zeta^{\varepsilon, \Lambda})$.  Then we have
 \begin{equation*}
   \mathcal{K}\zeta^{\varepsilon, \Lambda}(r_\varepsilon,z_\varepsilon)-\frac{W(r_\varepsilon)^2}{2}\log{\frac{1}{\varepsilon}}\ge \mu^{\varepsilon, \Lambda}.
 \end{equation*}
 Let $\sigma(r_\varepsilon,z_\varepsilon,r',z')$ be defined by $\eqref{2-1}$ and $\gamma\in(0,1)$.
 By the Green’s representation, we have
\begin{equation*}
\begin{split}
   \mathcal{K}\zeta^{\varepsilon, \Lambda}(r_\varepsilon,z_\varepsilon)&= \int_{D}K(r_\varepsilon,z_\varepsilon,r',z')\zeta^{\varepsilon, \Lambda}(r',z')r'dr'dz'\\
                       &=\left(\int_{D\cap\{\sigma>\varepsilon^\gamma\}}+\int_{D\cap\{\sigma \le \varepsilon^\gamma\}}\right)K(r_\varepsilon,z_\varepsilon,r',z')\zeta^{\varepsilon, \Lambda}(r',z') r'dr'dz'\\
                       &:=I_1+I_2.
\end{split}
\end{equation*}
For the first term $I_1$, we can use $\eqref{Tadiewrong}$ to obtain
\begin{equation}\label{2-14}
\begin{split}
   I_1 &=\int_{D\cap\{\sigma>\varepsilon^\gamma\}}K(r_\varepsilon,z_\varepsilon,r',z')\zeta^{\varepsilon, \Lambda}(r',z') r'dr'dz'\\
       &\le \frac{(r_\varepsilon)^{\frac{1}{2}}}{4\pi} \sinh^{-1}(\frac{1}{\varepsilon^\gamma})\int_{D\cap\{\sigma>\varepsilon^\gamma\}}\zeta^{\varepsilon, \Lambda}(r',z')r'^{{3}/{2}}dr'dz'\\
       &\le \frac{r_*}{2\pi} \sinh^{-1}(\frac{1}{\varepsilon^\gamma})\int_{D\cap\{\sigma>\varepsilon^\gamma\}}\zeta^{\varepsilon, \Lambda}(r',z')r'dr'dz'\\
       &\le \frac{\kappa r_*}{2\pi} \sinh^{-1}(\frac{1}{\varepsilon^\gamma}).
\end{split}
\end{equation}
On the other hand, notice that $D\cap\{\sigma \le \varepsilon^\gamma\} \subseteq B_{4r_*\varepsilon^\gamma}\big((r_\varepsilon,z_\varepsilon)\big)$. In view of Lemma $\ref{le2}$, we get
\begin{equation}\label{2-15}
\begin{split}
   I_2 &=\int_{D\cap\{\sigma \le \varepsilon^\gamma\}}K(r_\varepsilon,z_\varepsilon,r',z')\zeta^{\varepsilon, \Lambda}(r',z')r'dr'dz'\\
       &\le \frac{(r_\varepsilon)^2+C^*\varepsilon^\gamma}{2\pi}\int_{D\cap\{\sigma \le \varepsilon^\gamma\}}\log [(r_\varepsilon-r')^2+(z_\varepsilon-z')^2]^{-\frac{1}{2}}\zeta^{\varepsilon, \Lambda}(r',z')dr'dz'+C^*\\
       &\le \frac{(r_\varepsilon)^2+C^*\varepsilon^\gamma}{2\pi}\log\frac{1}{\varepsilon} \int_{D\cap\{\sigma \le \varepsilon^\gamma\}}\zeta^{\varepsilon, \Lambda}(r',z')dr'dz'+C^*\\
       &\le \frac{(r_\varepsilon)^2}{2\pi}\log\frac{1}{\varepsilon} \int_{D\cap\{\sigma \le \varepsilon^\gamma\}}\zeta^{\varepsilon, \Lambda}(r',z')dr'dz'+C^*\\
       &\le \frac{r_\varepsilon}{2\pi}\log\frac{1}{\varepsilon} \int_{B_{4r_*\varepsilon^\gamma}\big((r_\varepsilon,z_\varepsilon)\big)}\zeta^{\varepsilon, \Lambda} d\nu+C^*,
\end{split}
\end{equation}
where we have used an easy rearrangement inequality. Therefore, by Lemma $\ref{le7}$, we conclude that for any $a\in(r_*/2,2r_*)$
\begin{equation*}
\begin{split}
   \frac{r_\varepsilon}{2\pi}\log\frac{1}{\varepsilon} \int_{B_{4r_*\varepsilon^\gamma}\big((r_\varepsilon,z_\varepsilon)\big)}\zeta^{\varepsilon, \Lambda} d\nu&+\frac{\kappa r_*}{2\pi} \sinh^{-1}(\frac{1}{\varepsilon^\gamma})-\frac{W(r_\varepsilon)^2}{2}\log{\frac{1}{\varepsilon}} \\
     & \ge \left(\frac{a\kappa}{2\pi}-{Wa^2}\right)\log{\frac{1}{\varepsilon}}+\frac{W}{2\kappa}\log{\frac{1}{\varepsilon}}\int_Dr^2\zeta^{\varepsilon, \Lambda} d\nu-C^*.
\end{split}
\end{equation*}
Divide both sides of the above inequality by $\log\frac{1}{\varepsilon}$, we obtain
\begin{equation}\label{2-16}
\begin{split}
   \Gamma(r_\varepsilon)\ge \frac{r_\varepsilon}{2\pi}\int_{B_{4r_*\varepsilon^\gamma}\big((r_\varepsilon,z_\varepsilon)\big)}\zeta^{\varepsilon, \Lambda} d\nu-W(r_\varepsilon)^2 \ge &~ \Gamma(a)+\frac{W}{2\kappa}\left(\int_Dr^2\zeta^{\varepsilon, \Lambda} d\nu-\kappa(r_\varepsilon)^2\right)   \\
     &-\frac{\kappa r_*}{2\pi} \sinh^{-1}(\frac{1}{\varepsilon^\gamma})/\log\frac{1}{\varepsilon}-C^*/\log\frac{1}{\varepsilon}.
\end{split}
\end{equation}
Notice that
\begin{equation*}
  \int_Dr^2\zeta^{\varepsilon, \Lambda}d\nu \ge \kappa(\Theta^{\varepsilon, \Lambda}_-)^2.
\end{equation*}
Taking $(r_\varepsilon,z_\varepsilon)=(\Theta^{\varepsilon, \Lambda}_-,0)$ and letting $\varepsilon$ tend to $0^+$, we deduce from $\eqref{2-16}$ that
\begin{equation}\label{2-17}
  \liminf_{\varepsilon\to 0^+}\Gamma(\Theta^{\varepsilon, \Lambda}_-)\ge \Gamma(a)-\kappa \gamma r_*/(2\pi).
\end{equation}
Hence we get the desired result by letting $a\to r_*$ and $\gamma \to 0$.
\end{proof}

\begin{lemma} \label{le9}
For each fixed $\Lambda$, we have $\int_Dr^2\zeta^{\varepsilon, \Lambda}d\nu\to \kappa r_*^2$ as $\varepsilon \to 0^+$.
\end{lemma}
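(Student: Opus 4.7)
The plan is to combine the lower bound furnished by the Steiner symmetry with an upper bound extracted from the key inequality \eqref{2-16} that was already established inside the proof of Lemma~\ref{le8}. No new machinery is needed; the result is really a corollary of Lemma~\ref{le8} together with the intermediate estimate produced on the way to it.

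\textbf{Lower bound.} Since $\zeta^{\varepsilon,\Lambda}=(\zeta^{\varepsilon,\Lambda})^*$, Steiner symmetry in $z$ implies that for every $r$ with $\zeta^{\varepsilon,\Lambda}(r,\cdot)\not\equiv 0$ one has $\zeta^{\varepsilon,\Lambda}(r,0)>0$, hence the radial support of $\zeta^{\varepsilon,\Lambda}$ is contained in $[\Theta^{\varepsilon,\Lambda}_-,2r_*)$. Using the corollary after Lemma~\ref{le7} ($\int_D\zeta^{\varepsilon,\Lambda}\,d\nu=\kappa$ for small $\varepsilon$), I would conclude
\[
\int_D r^2\zeta^{\varepsilon,\Lambda}\,d\nu\ \ge\ (\Theta^{\varepsilon,\Lambda}_-)^2\,\kappa\ \longrightarrow\ \kappa r_*^2
\]
by Lemma~\ref{le8}.

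\textbf{Upper bound.} Here I would revisit \eqref{2-16}: for any $(r_\varepsilon,z_\varepsilon)\in\mathrm{supp}(\zeta^{\varepsilon,\Lambda})$, any $a\in(r_*/2,2r_*)$, and any $\gamma\in(0,1)$, there holds
\[
\Gamma(r_\varepsilon)\ \ge\ \Gamma(a)+\tfrac{W}{2\kappa}\Bigl(\textstyle\int_D r^2\zeta^{\varepsilon,\Lambda}\,d\nu-\kappa r_\varepsilon^2\Bigr)-\tfrac{\kappa r_*}{2\pi}\tfrac{\sinh^{-1}(\varepsilon^{-\gamma})}{\log(1/\varepsilon)}-\tfrac{C^*}{\log(1/\varepsilon)}.
\]
Choose $a=r_*$ and take $(r_\varepsilon,z_\varepsilon)=(\Theta^{\varepsilon,\Lambda}_-,0)$, which belongs to the (closed) support. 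Since $r_*$ is the global maximizer of $\Gamma$ on $[0,\infty)$, we have $\Gamma(r_\varepsilon)\le\Gamma(r_*)$, so the displayed inequality reduces to
\[
\int_D r^2\zeta^{\varepsilon,\Lambda}\,d\nu\ \le\ \kappa(\Theta^{\varepsilon,\Lambda}_-)^2+\tfrac{\kappa^2 r_*}{\pi W}\tfrac{\sinh^{-1}(\varepsilon^{-\gamma})}{\log(1/\varepsilon)}+o_\varepsilon(1).
\]
Using $\sinh^{-1}(\varepsilon^{-\gamma})/\log(1/\varepsilon)\to\gamma$ as $\varepsilon\to0^+$, together with $\Theta^{\varepsilon,\Lambda}_-\to r_*$ (Lemma~\ref{le8}), we obtain
\[
\limsup_{\varepsilon\to 0^+}\int_D r^2\zeta^{\varepsilon,\Lambda}\,d\nu\ \le\ \kappa r_*^2+\tfrac{\kappa^2 r_*\gamma}{\pi W}.
\]
Letting $\gamma\to 0^+$ produces the matching upper bound, and together with the lower bound this yields the claim.

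There is no serious obstacle in this argument, since both building blocks are already in place; the only care needed is to verify that Steiner symmetry really places the entire support of $\zeta^{\varepsilon,\Lambda}$ in $\{r\ge\Theta^{\varepsilon,\Lambda}_-\}$ (so that the lower bound is legitimate) and to reuse \eqref{2-16} with the optimal choice $a=r_*$ (rather than an arbitrary $a$ approaching $r_*$, as in Lemma~\ref{le8}), which is what makes the $o_\varepsilon$ error terms dominate and forces the concentration of the second moment at $r_*$.
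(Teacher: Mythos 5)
Your proof is correct and follows essentially the same route as the paper: the lower bound $\int_D r^2\zeta^{\varepsilon,\Lambda}\,d\nu\ge\kappa(\Theta^{\varepsilon,\Lambda}_-)^2$ combined with the upper bound extracted from \eqref{2-16} at $(\Theta^{\varepsilon,\Lambda}_-,0)$, then $\gamma\to 0$ and Lemma~\ref{le8}. The only cosmetic difference is that you fix $a=r_*$ at the outset (legitimate, since $r_*\in(r_*/2,2r_*)$) whereas the paper keeps $a$ general and lets $a\to r_*$ afterwards.
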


\begin{proof}
From $\eqref{2-16}$, we know that for any $\gamma\in(0,1)$,
\begin{equation*}
  0\le \liminf_{\varepsilon\to 0^+}\left[\int_Dr^2\zeta^{\varepsilon, \Lambda} d\nu-\kappa(\Theta^{\varepsilon, \Lambda}_-)^2\right]\le \limsup_{\varepsilon\to 0^+}\left[\int_Dr^2\zeta^{\varepsilon, \Lambda} d\nu-\kappa(\Theta^{\varepsilon, \Lambda}_-)^2\right]\le \frac{\kappa^2 r_*\gamma}{\pi W}.
\end{equation*}
Thus, by letting $\gamma\to 0$, we get
\begin{equation*}
  \lim_{\varepsilon\to 0^+}\int_Dr^2\zeta^{\varepsilon, \Lambda}d\nu= \kappa r_*^2.
\end{equation*}
\end{proof}

From Lemmas $\ref{le8}$ and $\ref{le9}$, we immediately get the following result.
\begin{lemma}\label{le10}
For any $\eta>0$, there holds
\begin{equation*}
  \lim_{\varepsilon\to 0^+}\int_{D\cap\{r\ge r_*+\eta\}}\zeta^{\varepsilon, \Lambda} d\nu=0.
\end{equation*}
\end{lemma}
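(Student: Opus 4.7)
\textbf{Proof proposal for Lemma \ref{le10}.} The plan is to combine the two preceding facts --- the convergence $\Theta_-^{\varepsilon,\Lambda}\to r_*$ from Lemma \ref{le8} and the second--moment identity $\int_D r^2\zeta^{\varepsilon,\Lambda}d\nu\to \kappa r_*^2$ from Lemma \ref{le9} --- with the total mass identity $\int_D\zeta^{\varepsilon,\Lambda}d\nu=\kappa$ (Corollary after Lemma \ref{le7}), in order to force the mass of $\zeta^{\varepsilon,\Lambda}$ outside the set $\{r<r_*+\eta\}$ to vanish. The key geometric input comes from the Steiner symmetrization property $\zeta^{\varepsilon,\Lambda}=(\zeta^{\varepsilon,\Lambda})^*$ with respect to $z=0$.

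First I would exploit the symmetrization to locate the support radially. Since $z\mapsto \zeta^{\varepsilon,\Lambda}(r,z)$ is symmetric--decreasing in $z$ for each fixed $r$, the presence of any $z$ with $\zeta^{\varepsilon,\Lambda}(r,z)>0$ forces $\zeta^{\varepsilon,\Lambda}(r,0)>0$. Therefore
\begin{equation*}
\mathrm{supp}(\zeta^{\varepsilon,\Lambda})\subseteq\{(r,z)\in\Pi\mid r\geq \Theta_-^{\varepsilon,\Lambda}\},
\end{equation*}
so the pointwise bound $r\geq \Theta_-^{\varepsilon,\Lambda}$ holds almost everywhere on the support.

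Next I would split the second moment. Writing $M_\varepsilon:=\int_{D\cap\{r\geq r_*+\eta\}}\zeta^{\varepsilon,\Lambda}d\nu$ and using $\int_D\zeta^{\varepsilon,\Lambda}d\nu=\kappa$,
\begin{equation*}
\int_D r^2\zeta^{\varepsilon,\Lambda}d\nu
= \int_{D\cap\{r<r_*+\eta\}} r^2\zeta^{\varepsilon,\Lambda}d\nu + \int_{D\cap\{r\geq r_*+\eta\}} r^2\zeta^{\varepsilon,\Lambda}d\nu
\geq (\Theta_-^{\varepsilon,\Lambda})^2(\kappa-M_\varepsilon)+(r_*+\eta)^2 M_\varepsilon,
\end{equation*}
which rearranges into
\begin{equation*}
\bigl[(r_*+\eta)^2-(\Theta_-^{\varepsilon,\Lambda})^2\bigr]M_\varepsilon
\leq \int_D r^2\zeta^{\varepsilon,\Lambda}d\nu - \kappa\,(\Theta_-^{\varepsilon,\Lambda})^2.
\end{equation*}

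Finally I would pass to the limit $\varepsilon\to 0^+$. By Lemma \ref{le8}, $(\Theta_-^{\varepsilon,\Lambda})^2\to r_*^2$, so the right--hand side tends to $\kappa r_*^2-\kappa r_*^2=0$ by Lemma \ref{le9}, while the coefficient on the left converges to the strictly positive constant $(r_*+\eta)^2-r_*^2>0$. Since $M_\varepsilon\geq 0$, this forces $M_\varepsilon\to 0$, which is precisely the claim. I do not anticipate a genuine obstacle here: essentially all the work has been done in Lemmas \ref{le8} and \ref{le9}, and the only subtlety is ensuring that the symmetrization translates $\Theta_-^{\varepsilon,\Lambda}$ (originally defined on the axis $z=0$) into a global radial lower bound on the support, which the Steiner rearrangement in $z$ provides for free.
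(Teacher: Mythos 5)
Your argument is correct and is precisely the deduction the paper has in mind: the authors state that Lemma \ref{le10} follows immediately from Lemmas \ref{le8} and \ref{le9}, and your second-moment splitting, together with the support bound $r\ge\Theta_-^{\varepsilon,\Lambda}$ coming from the Steiner symmetry (a fact the paper itself already uses in the proof of Lemma \ref{le8} via $\int_D r^2\zeta^{\varepsilon,\Lambda}d\nu\ge\kappa(\Theta_-^{\varepsilon,\Lambda})^2$), is exactly the omitted computation. No gaps.
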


\begin{lemma}\label{le11}
For each fixed $\Lambda$, $\lim_{\varepsilon\to 0^+}\Theta^{\varepsilon, \Lambda}_+=r_*$.
\end{lemma}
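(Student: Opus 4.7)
The plan is to argue by contradiction and adapt the Green's function splitting argument of Lemma \ref{le8}, exploiting the new mass-concentration information provided by Lemma \ref{le10}. Set $r_\varepsilon := \Theta^{\varepsilon,\Lambda}_+$. Since $r_\varepsilon \ge \Theta^{\varepsilon,\Lambda}_-$ and $\Theta^{\varepsilon,\Lambda}_- \to r_*$ by Lemma \ref{le8}, we automatically have $\liminf_{\varepsilon\to 0^+}r_\varepsilon \ge r_*$. The task is therefore to prove the matching upper bound $\limsup_{\varepsilon\to 0^+}r_\varepsilon \le r_*$. Suppose to the contrary that there exist $\eta>0$ and a sequence $\varepsilon_n\to 0^+$ with $r_n := \Theta^{\varepsilon_n,\Lambda}_+ \ge r_* + \eta$. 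Since $(r_n,0)\in \mathrm{supp}(\zeta^{\varepsilon_n,\Lambda})$, we have
\begin{equation*}
\mathcal{K}\zeta^{\varepsilon_n,\Lambda}(r_n,0) - \frac{W r_n^2}{2}\log\frac{1}{\varepsilon_n} \ge \mu^{\varepsilon_n,\Lambda}.
\end{equation*}

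The key step is to estimate $\mathcal{K}\zeta^{\varepsilon_n,\Lambda}(r_n,0)$ by the same decomposition $I_1 + I_2$ as in Lemma \ref{le8}, where $I_1$ and $I_2$ are the integrals over $\{\sigma > \varepsilon_n^\gamma\}$ and $\{\sigma \le \varepsilon_n^\gamma\}$ respectively, with $\gamma \in (0,1)$. The bound $I_1 \le \tfrac{\kappa r_*\gamma}{2\pi}\log(1/\varepsilon_n) + C$ follows exactly as in \eqref{2-14}. The improvement occurs in $I_2$: since the ball $B_{4r_*\varepsilon_n^\gamma}((r_n,0))$ is contained in $\{r \ge r_* + \eta/2\}$ for $n$ large, Lemma \ref{le10} gives
\begin{equation*}
\int_{B_{4r_*\varepsilon_n^\gamma}((r_n,0))}\zeta^{\varepsilon_n,\Lambda}\,d\nu \;\longrightarrow\; 0 \quad \text{as } n\to\infty,
\end{equation*}
so from \eqref{2-15} we obtain $I_2 \le o_{\varepsilon_n}(1)\log(1/\varepsilon_n) + C^*$. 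This is precisely what was \emph{not} available at $(\Theta^{\varepsilon,\Lambda}_-,0)$, and it is the crucial improvement that drives the argument.

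Plugging these bounds together with the lower bound on $\mu^{\varepsilon_n,\Lambda}$ from Lemma \ref{le7}, dividing through by $\log(1/\varepsilon_n)$, and using Lemma \ref{le9} to handle the $\int_D r^2\zeta^{\varepsilon_n,\Lambda}\,d\nu$ term, I expect to pass to the limit (choosing $a = r_*$) and arrive at
\begin{equation*}
\frac{\kappa r_*\gamma}{2\pi} - \frac{W (r^+)^2}{2} \;\ge\; \frac{\kappa r_*}{2\pi} - \frac{W r_*^2}{2},
\end{equation*}
where $r^+ := \lim_n r_n \ge r_* + \eta$. Letting $\gamma \to 0$ yields $-\tfrac{\kappa r_*}{2\pi} \ge \tfrac{W}{2}((r^+)^2 - r_*^2) > 0$, a contradiction. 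Hence $\limsup r_\varepsilon \le r_*$ and the lemma follows.

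The main obstacle I anticipate is purely bookkeeping: keeping track of which error terms are $o_{\varepsilon_n}(1)$, which are $C^*/\log(1/\varepsilon_n)$, and which contribute at leading order, so that after division by $\log(1/\varepsilon_n)$ the inequality is sharp enough to yield a strict contradiction once $\gamma\to 0$ and $r^+ > r_*$. The conceptual novelty over Lemma \ref{le8} is slight --- only Lemma \ref{le10} is new in the input --- but the fact that at $(\Theta^{\varepsilon,\Lambda}_+,0)$ one can no longer use $\int_D r^2\zeta^{\varepsilon,\Lambda}\,d\nu \ge \kappa r_\varepsilon^2$ means the argument must instead pull mass-localization information directly through the nonlocal term $I_2$.
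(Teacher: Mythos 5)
Your proposal is correct and takes essentially the same route as the paper: the paper applies \eqref{2-16} at the point $(\Theta^{\varepsilon,\Lambda}_+,0)$, uses Lemmas \ref{le8} and \ref{le9} to deduce the unconditional lower bound $\liminf_{\varepsilon\to 0^+}\int_{B_{4r_*\varepsilon^\gamma}((\Theta^{\varepsilon,\Lambda}_+,0))}\zeta^{\varepsilon,\Lambda}\,d\nu\ge \kappa(1-\gamma)/2>0$, and then concludes by Lemma \ref{le10} --- which is precisely your contradiction argument written in contrapositive form, with the same $I_1+I_2$ splitting and the same inputs. The bookkeeping you flag does work out as you expect, so there is no gap.
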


\begin{proof}
From $\eqref{2-16}$, we obtain
\begin{equation*}
\begin{split}
   \frac{r_*}{\pi}\liminf_{\varepsilon\to 0^+}\int_{B_{4r_*\varepsilon^\gamma}\big((\Theta^{\varepsilon, \Lambda}_+,0)\big)}\zeta^{\varepsilon,\Lambda} d\nu  & \ge \Gamma(r_*)+\frac{W}{2}\liminf_{\varepsilon\to 0^+}(\Theta^{\varepsilon, \Lambda}_+)^2+\frac{Wr_*^2}{2}-\frac{\kappa r_*\gamma}{2\pi} \\
     & \ge \frac{\kappa r_*}{2\pi}-\frac{\kappa r_*\gamma}{2\pi}.
\end{split}
\end{equation*}
Hence
\begin{equation*}
  \liminf_{\varepsilon\to 0^+}\int_{B_{4r_*\varepsilon^\gamma}\big((\Theta^{\varepsilon, \Lambda}_+,0)\big)}\zeta^{\varepsilon, \Lambda} d\nu\ge\kappa\left(\frac{ 1}{2}-\frac{\gamma}{2}\right).
\end{equation*}
Now the desired result clearly follows from Lemma $\ref{le10}$.
\end{proof}

\begin{lemma}\label{le12}
Let $\Lambda$ be fixed. Then for any number $\gamma\in(0, 1)$, there holds $$diam\big({supp}(\zeta^{\varepsilon,\Lambda})\big) \le 8r_* \varepsilon^{\gamma}$$ provided $\varepsilon$ is small enough. Consequently,
\begin{equation*}
  \begin{split}
     \lim_{\varepsilon\to 0^+}dist_{\mathcal{C}_{r_{_*}}}\left({supp}(\zeta^{\varepsilon,\Lambda})\right) & =0, \\
      dist\left( {supp}(\zeta^{\varepsilon,\Lambda}), \partial D \right) & >0.
  \end{split}
\end{equation*}
\end{lemma}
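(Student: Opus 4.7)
The plan is to argue by contradiction. Suppose that for some arbitrarily small $\varepsilon$ one has $\text{diam}\bigl(\text{supp}(\zeta^{\varepsilon,\Lambda})\bigr) > 8 r_* \varepsilon^\gamma$. Then there exist two points $p_1, p_2 \in \text{supp}(\zeta^{\varepsilon,\Lambda})$ with $|p_1 - p_2| > 8 r_* \varepsilon^\gamma$, so that the balls $B_i := B_{4 r_* \varepsilon^\gamma}(p_i)$ are disjoint. The objective is to show that each $B_i$ captures essentially the whole mass $\kappa$, which will contradict $\int_\Pi \zeta^{\varepsilon,\Lambda}\, d\nu = \kappa$.

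The heart of the matter is a refinement of the concentration estimate from the proof of Lemma \ref{le8}. For a support point $p = (r_\varepsilon, z_\varepsilon)$, set $m(p) := \int_{B_{4 r_* \varepsilon^\gamma}(p)} \zeta^{\varepsilon,\Lambda}\, d\nu$. In \eqref{2-14} the integral on the far set $\{\sigma > \varepsilon^\gamma\}$ was bounded crudely by the total mass $\kappa$; I would instead use the sharper mass partition
\[
\int_{D \cap \{\sigma > \varepsilon^\gamma\}} r'\, \zeta^{\varepsilon,\Lambda}\, dr'dz' \;\le\; \kappa - m(p).
\]
Substituting this improved bound into the derivation of \eqref{2-16}, and exploiting (i) $\sinh^{-1}(1/\varepsilon^\gamma)/\log(1/\varepsilon) \to \gamma$, (ii) $r_\varepsilon \to r_*$ at every support point (Lemmas \ref{le8} and \ref{le11}), (iii) $\int_D r^2\zeta^{\varepsilon,\Lambda}\, d\nu \to \kappa r_*^2$ (Lemma \ref{le9}), and (iv) the identity $2 W r_*^2 = \kappa r_*/(2\pi)$ coming from $r_* = \kappa/(4\pi W)$, one is led to the limiting inequality
\[
\kappa \;\le\; \gamma\kappa + (1-\gamma)\, m(p) + o(1).
\]
Since $1-\gamma > 0$, this yields
\[
m(p) \;\ge\; \kappa - o(1) \qquad\text{for any fixed } \gamma \in (0,1),
\]
a substantial improvement over the $\kappa(1-\gamma)$ concentration used directly in Lemma \ref{le8}. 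Applying this at both $p_1$ and $p_2$ and invoking the disjointness of $B_1, B_2$,
\[
2\kappa - o(1) \;\le\; m(p_1) + m(p_2) \;\le\; \int_\Pi \zeta^{\varepsilon,\Lambda}\, d\nu = \kappa,
\]
which is impossible for $\varepsilon$ small. Hence $\text{diam}\bigl(\text{supp}(\zeta^{\varepsilon,\Lambda})\bigr) \le 8 r_* \varepsilon^\gamma$ for all sufficiently small $\varepsilon$.

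For the consequences, any support point $(r,z)$ satisfies $|r - r_*| \le \max\bigl(|\Theta^{\varepsilon,\Lambda}_+ - r_*|,\, |\Theta^{\varepsilon,\Lambda}_- - r_*|\bigr) \to 0$ by Lemmas \ref{le8} and \ref{le11}, while the Steiner symmetry in $z$ combined with the diameter bound yields $|z| \le \tfrac12\,\text{diam}\bigl(\text{supp}(\zeta^{\varepsilon,\Lambda})\bigr) \le 4 r_* \varepsilon^\gamma \to 0$; translating back to $\mathbb{R}^3$ gives $\text{dist}_{\mathcal{C}_{r_*}}(\text{supp}(\zeta^{\varepsilon,\Lambda})) \to 0$. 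Moreover, $(r_*,0)$ lies strictly inside $D$ and the support is eventually contained in an arbitrarily small neighborhood of $(r_*,0)$, so $\text{dist}(\text{supp}(\zeta^{\varepsilon,\Lambda}), \partial D) > 0$ holds automatically. The main obstacle is the refined concentration $m(p) \ge \kappa - o(1)$: one must carefully retain the $m(p)$-dependence inside the far integral $I_1$ when propagating \eqref{2-16}, which is exactly what upgrades the naive argument from the restricted range $\gamma \in (0,1/2)$ (what Lemma \ref{le8} would give directly) to the full range $\gamma \in (0,1)$.
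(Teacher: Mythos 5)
Your argument is correct in substance but follows a genuinely different route from the paper's. The paper proceeds in two stages: it first reuses \eqref{2-16} verbatim to get $\liminf_{\varepsilon\to0^+}\int_{B_{4r_*\varepsilon^\gamma}}\zeta^{\varepsilon,\Lambda}\,d\nu\ge\kappa(1-\gamma)$, which yields the ``more than half the mass in every ball'' criterion only for $\gamma<1/2$ and hence a preliminary diameter bound; it then uses the resulting localization ($r',r_\varepsilon\to r_*$ rather than merely $r',r_\varepsilon\le 2r_*$) to improve the constant in the far-field estimate \eqref{2-14} by a factor of $2$, upgrading the concentration to $\kappa(1-\gamma/2)>\kappa/2$ for all $\gamma\in(0,1)$. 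You instead keep track of the near/far mass split in a single pass: replacing the crude bound $\int_{\{\sigma>\varepsilon^\gamma\}}\zeta r'\le\kappa$ by $\kappa-m(p)$ turns the limiting inequality into $\gamma\kappa+(1-\gamma)m(p)\ge\kappa-o(1)$, whence $m(p)\ge\kappa-o(1)$ for every fixed $\gamma\in(0,1)$ with no bootstrap, and the two-disjoint-balls contradiction finishes the proof. Your route is cleaner, avoids the factor-of-two bookkeeping, and delivers a strictly stronger concentration statement; the paper's route has the advantage of recycling \eqref{2-16} without modification.

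One imprecision should be repaired. With your definition $m(p)=\int_{B_{4r_*\varepsilon^\gamma}(p)}\zeta^{\varepsilon,\Lambda}\,d\nu$, the displayed inequality $\int_{D\cap\{\sigma>\varepsilon^\gamma\}}r'\zeta^{\varepsilon,\Lambda}\,dr'dz'\le\kappa-m(p)$ goes the wrong way: since $D\cap\{\sigma\le\varepsilon^\gamma\}\subseteq B_{4r_*\varepsilon^\gamma}(p)$ (the inclusion may be strict, as $\sigma\le\varepsilon^\gamma$ only forces $|p-p'|\le 2(rr')^{1/2}\varepsilon^\gamma$), the exact identity gives $\int_{\{\sigma>\varepsilon^\gamma\}}r'\zeta\,dr'dz'=\kappa-\int_{\{\sigma\le\varepsilon^\gamma\}}r'\zeta\,dr'dz'\ge\kappa-m(p)$. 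The fix is to set $m(p):=\int_{D\cap\{\sigma\le\varepsilon^\gamma\}}\zeta^{\varepsilon,\Lambda}\,d\nu$ throughout, for which the partition identity and the near-field bound $I_2\le\frac{r_\varepsilon}{2\pi}\log\frac{1}{\varepsilon}\,m(p)+C^*$ both hold exactly as you use them; since $\{\sigma\le\varepsilon^\gamma\}\subseteq B_{4r_*\varepsilon^\gamma}(p)$, the disjointness of the two balls still forces $m(p_1)+m(p_2)\le\kappa$, and the contradiction is unaffected. Your treatment of the two consequences (radial localization from Lemmas \ref{le8} and \ref{le11} together with Steiner symmetry in $z$, then interiority of $(r_*,0)$ in $D$) is fine.
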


\begin{proof}
  Let us use the same notation as in the proof of Lemma $\ref{le8}$. Recalling that $\int_{D}\zeta^{\varepsilon,\Lambda} d\nu=\kappa, $ so it suffices to prove that
  \begin{equation*}
  \int_{B_{4r_*\varepsilon^\gamma}\left((r_\varepsilon,z_\varepsilon)\right)}\zeta^{\varepsilon,\Lambda} d\nu>\kappa/2, \ \ \forall\, (r_\varepsilon,z_\varepsilon)\in {supp}(\zeta^{\varepsilon,\Lambda}).
  \end{equation*}
  Firstly, from Lemma $\ref{le11}$ we know that $r_\varepsilon\to r_*$  as $\varepsilon \to 0^+$.
  By $\eqref{2-16}$ we get
\begin{equation}\label{2-17}
  \liminf_{\varepsilon\to 0^+}\int_{B_{4r_*\varepsilon^\gamma}\big((r_\varepsilon,z_\varepsilon)\big)}\zeta^{\varepsilon,\Lambda} d\nu \ge \kappa\left(1-\gamma\right),
\end{equation}
which implies the desired result for $0<\gamma<1/2$  . It follows that $diam\big({supp}(\zeta^{\varepsilon,\Lambda})\big)\le C/\log\frac{1}{\varepsilon}$ provided $\varepsilon$ is small enough. With this in hand, we can improve $\eqref{2-14}$ as follows
\begin{equation*}
\begin{split}
   I_1 &=\int_{D\cap\{\sigma>\varepsilon^\gamma\}}K(r_\varepsilon,z_\varepsilon,r',z')\zeta^{\varepsilon,\Lambda}(r',z') r'dr'dz'\\
       &\le \frac{(r_\varepsilon)^{\frac{1}{2}}}{2\pi} \sinh^{-1}(\frac{1}{\varepsilon^\gamma})\int_{D\cap\{\sigma>\varepsilon^\gamma\}}\zeta^{\varepsilon,\Lambda}(r',z')r'^{\frac{3}{2}}dr'dz'\\
       &\le \frac{\kappa r_\varepsilon}{2\pi} \sinh^{-1}(\frac{1}{\varepsilon^\gamma})+C.
\end{split}
\end{equation*}
We can now repeat the proof and sharpen $\eqref{2-17}$ as follows
\begin{equation*}
  \liminf_{\varepsilon\to 0^+}\int_{B_{4r_*\varepsilon^\gamma}\left((r_\varepsilon,z_\varepsilon)\right)}\zeta^{\varepsilon,\Lambda}d\nu \ge \kappa\left(1-\frac{\gamma}{2}\right)>\kappa/2,
\end{equation*}
which implies the desired result and completes the proof.
\end{proof}

The following lemma shows that  $\psi^{\varepsilon,\Lambda}$ has a priori upper bound with respect to $\Lambda$.
\begin{lemma}\label{le13}
  Let $\Lambda$ be fixed. Then for all sufficiently small $\varepsilon$, we have
  \begin{equation*}
\begin{split}
     \psi^{\varepsilon, \Lambda}(r,z) \le  &\ |2\delta_0-1|\kappa^{-1}\int_D\partial_sJ(r,\Lambda)\zeta^{\varepsilon,\Lambda}d\nu \\
     & +(C+o_\varepsilon(\Lambda))\left(1+\left(\int_D \partial_sJ(r,\Lambda)\zeta^{\varepsilon,\Lambda}d\nu\right)^\frac{1}{2}\right)+C\log \Lambda,\ \ \ \ a.e.\  \text{in}\  D.
\end{split}
\end{equation*}
\end{lemma}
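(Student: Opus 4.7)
I start from the identity $\psi^{\varepsilon,\Lambda}(r,z)=\mathcal{K}\zeta^{\varepsilon,\Lambda}(r,z)-\tfrac{Wr^{2}}{2}\log\tfrac{1}{\varepsilon}-\mu^{\varepsilon,\Lambda}$ of Lemma~\ref{le4} and combine it with the lower bound for $\mu^{\varepsilon,\Lambda}$ provided by Lemma~\ref{le7}, taking the optimal value $a=r_{*}$ (which maximises $\tfrac{a\kappa}{2\pi}-Wa^{2}$ to the value $Wr_{*}^{2}$). This substitution immediately reproduces the two ``$\partial_{s}J$''-terms on the right hand side of the claim and reduces the problem to establishing
\[
\mathcal{K}\zeta^{\varepsilon,\Lambda}(r,z)-\Bigl(\tfrac{Wr^{2}}{2}+Wr_{*}^{2}\Bigr)\log\tfrac{1}{\varepsilon}-\tfrac{W}{2\kappa}\log\tfrac{1}{\varepsilon}\int_{D}r^{2}\zeta^{\varepsilon,\Lambda}\,d\nu\;\le\;C\log\Lambda+C.
\]

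The second step is a pointwise upper bound on $\mathcal{K}\zeta^{\varepsilon,\Lambda}(r,z)$. Writing $K=\tfrac{\sqrt{rr'}}{2\pi}\log(1/\sigma)+\mathrm{bounded}$ from Lemma~\ref{le2}, the bounded remainder contributes at most $O(\kappa)$, and the singular part is handled by a Hardy--Littlewood type rearrangement in the measure $d\nu$. Under the constraints $0\le\zeta^{\varepsilon,\Lambda}\le\Lambda/\varepsilon^{2}$ and $\int\zeta^{\varepsilon,\Lambda}d\nu\le\kappa$, the extremal configuration concentrates the whole $\nu$-mass $\kappa$ on a ball centred at $(r,z)$ of radius $\rho_{*}$ of order $\varepsilon\Lambda^{-1/2}$. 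Combined with the concentration of $\zeta^{\varepsilon,\Lambda}$ near $(r_{*},0)$ already established in Lemmas~\ref{le8}, \ref{le11}, \ref{le12} (which pins the effective factor $\sqrt{r'}$ on the support to $\sqrt{r_{*}}(1+o_{\varepsilon}(1))$), an elementary computation yields
\[
\mathcal{K}\zeta^{\varepsilon,\Lambda}(r,z)\;\le\;\tfrac{\sqrt{rr_{*}}\,\kappa}{2\pi}\log\tfrac{1}{\varepsilon}+\tfrac{\sqrt{rr_{*}}\,\kappa}{4\pi}\log\Lambda+C,
\]
the $\log\Lambda$ factor arising because $\log(1/\rho_{*})=\log(1/\varepsilon)+\tfrac{1}{2}\log\Lambda+O(1)$.

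Plugging this bound into the displayed target inequality and using $\kappa=4\pi Wr_{*}$ together with Lemma~\ref{le9} in the form $\int_{D}r^{2}\zeta^{\varepsilon,\Lambda}\,d\nu=\kappa r_{*}^{2}+o_{\varepsilon}(\Lambda)$, the overall coefficient of $\log(1/\varepsilon)$ collapses to $W\,f(r)+o_{\varepsilon}(\Lambda)$, where $f(r):=2\sqrt{rr_{*}}\,r_{*}-\tfrac{1}{2}(r^{2}+3r_{*}^{2})$. Applying AM--GM to the four numbers $r^{2},r_{*}^{2},r_{*}^{2},r_{*}^{2}$ gives $r^{2}+3r_{*}^{2}\ge 4\sqrt{rr_{*}}\,r_{*}$ with equality iff $r=r_{*}$, so $f(r)\le 0$ on $D$. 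Hence the leading $\log(1/\varepsilon)$ contribution is non-positive and only the residual $C\log\Lambda+C$ survives, which is exactly the claimed estimate.

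\textbf{Main obstacle.} The delicate point is the knife-edge radius $r=r_{*}$, where $f(r_{*})=0$ and the cancellation is only marginal: controlling the residual $o_{\varepsilon}(\Lambda)\log(1/\varepsilon)$ then requires a quantitative rate in Lemma~\ref{le9} (better than $o(1/\log(1/\varepsilon))$). If such a uniform rate is not readily available, I would instead split $D$ into a near-regime $|(r,z)-(r_{*},0)|\le\varepsilon^{\gamma/2}$ and a far-regime: on the near-regime the rearrangement argument above (with its intrinsic $\log\Lambda$ factor) does the job, while on the far-regime the cruder estimate $\log(1/\sigma)\le\gamma\log(1/\varepsilon)+C$ already used in Lemma~\ref{le8} suffices since the corresponding $f$-coefficient is then strictly negative.
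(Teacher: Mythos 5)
Your overall architecture (start from $\psi^{\varepsilon,\Lambda}=\mathcal{K}\zeta^{\varepsilon,\Lambda}-\tfrac{Wr^2}{2}\log\tfrac{1}{\varepsilon}-\mu^{\varepsilon,\Lambda}$, lower-bound $\mu^{\varepsilon,\Lambda}$ via Lemma \ref{le7}, upper-bound $\mathcal{K}\zeta^{\varepsilon,\Lambda}$ by a rearrangement with the resulting $\tfrac12\log\Lambda$ from $\rho_*\sim\varepsilon\Lambda^{-1/2}$) is the same as the paper's, but the gap you flag yourself in the ``main obstacle'' paragraph is a genuine one, and your proposed repair does not close it. Fixing $a=r_*$ in Lemma \ref{le7} and invoking Lemma \ref{le9} in the form $\int_D r^2\zeta^{\varepsilon,\Lambda}\,d\nu=\kappa r_*^2+o(1)$ leaves a residual $o(1)\cdot\log\tfrac{1}{\varepsilon}$, and Lemmas \ref{le8}, \ref{le9}, \ref{le11} give no rate whatsoever for $\Theta^{\varepsilon,\Lambda}_{\pm}\to r_*$ or for $\int_D r^2\zeta^{\varepsilon,\Lambda}d\nu\to\kappa r_*^2$. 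Your near/far splitting only exploits the strict negativity of $f(r)$ away from $r_*$; in the near regime the problematic term is not $f(r)\log\tfrac1\varepsilon$ (which is indeed $O(\varepsilon^{\gamma}\log\tfrac1\varepsilon)$ there, since $f$ vanishes to second order at $r_*$) but precisely the rate-free error $\tfrac{W}{2\kappa}\bigl(\kappa r_*^2-\int_D r^2\zeta^{\varepsilon,\Lambda}d\nu\bigr)\log\tfrac1\varepsilon$, equivalently $\tfrac{W}{2}(r_*^2-(\Theta^{\varepsilon,\Lambda}_-)^2)\log\tfrac1\varepsilon$, which can be positive and unbounded. So the split changes nothing where it matters.

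The paper's proof avoids needing any rate by making the $\varepsilon$-dependent leading terms cancel identically rather than asymptotically: it works only for $(r,z)\in\operatorname{supp}(\zeta^{\varepsilon,\Lambda})$, bounds $\mathcal{K}\zeta^{\varepsilon,\Lambda}\le\bigl(\tfrac{\kappa\Theta^{\varepsilon,\Lambda}_+}{2\pi}\bigr)\log\tfrac1\varepsilon+C\log\Lambda+C$ and $\tfrac{Wr^2}{2}\log\tfrac1\varepsilon\ge\tfrac{W(\Theta^{\varepsilon,\Lambda}_-)^2}{2}\log\tfrac1\varepsilon$, and then chooses $a=\Theta^{\varepsilon,\Lambda}_+$ (not $a=r_*$) together with $\int_D r^2\zeta^{\varepsilon,\Lambda}d\nu\ge\kappa(\Theta^{\varepsilon,\Lambda}_-)^2$ in Lemmas \ref{le6}--\ref{le7}. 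The mismatch then reduces to $\tfrac{W}{2}\bigl((\Theta^{\varepsilon,\Lambda}_+)^2-(\Theta^{\varepsilon,\Lambda}_-)^2\bigr)\log\tfrac1\varepsilon$, which \emph{is} quantitatively controlled, namely by $O(\varepsilon^{1/2}\log\tfrac1\varepsilon)$ via the diameter bound of Lemma \ref{le12} (the one asymptotic statement in this part of the paper that does carry a rate). I recommend you restructure your argument around this $\varepsilon$-dependent choice of $a$; your rearrangement bound for $\mathcal{K}\zeta^{\varepsilon,\Lambda}$ and the bookkeeping of the $\log\Lambda$ terms can then be kept as they are.
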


\begin{proof}
  For any $(r,z)\in supp\,(\zeta^{\varepsilon,\Lambda})$, by Lemmas \ref{le1}, \ref{le2}, \ref{le11} and \ref{le12}, we have
\begin{equation*}
  \begin{split}
      \psi^{\varepsilon,\Lambda}(r,z)& =\mathcal{K}\zeta^{\varepsilon,\Lambda}(r,z)-\frac{Wr^2}{2}\log{\frac{1}{\varepsilon}}-\mu^{\varepsilon,\Lambda} \\
       & = \int_D K(r,z,r',z')\zeta^{\varepsilon,\Lambda}r'dr'dz'-\frac{Wr^2}{2}\log{\frac{1}{\varepsilon}}-\mu^{\varepsilon,\Lambda} \\
       & \le \frac{\Theta^{\varepsilon, \Lambda}_++O(\varepsilon^\frac{1}{2})}{2\pi}\int_D\log{\big[(r-r')^2+(z-z')^2 \big]^{-\frac{1}{2}}}\zeta^{\varepsilon,\Lambda}(r', z')r'dr'dz'\\
       & \ \ \ \ \ \ \ \ \ \ \ \ \ \ \ \ \ \ \ \  \ \ -\frac{W(\Theta^{\varepsilon, \Lambda}_-)^2}{2}\log{\frac{1}{\varepsilon}}-\mu^{\varepsilon,\Lambda}+C \\
       & \le \frac{(\Theta^{\varepsilon, \Lambda}_+)^2+O(\varepsilon^\frac{1}{2})}{2\pi}\int_D\log{\big[(r-r')^2+(z-z')^2 \big]^{-\frac{1}{2}}}\zeta^{\varepsilon,\Lambda}(r', z')dr'dz' \\
          & \ \ \ \ \ \ \ \ \ \ \ \ \ \ \ \ \ \ \ \  \ \ -\frac{W(\Theta^{\varepsilon, \Lambda}_-)^2}{2}\log{\frac{1}{\varepsilon}}-\mu^{\varepsilon,\Lambda}+C \\
       & \le \frac{(\Theta^{\varepsilon, \Lambda}_+)^2+O(\varepsilon^\frac{1}{2})}{2\pi}\log\frac{\sqrt{\Lambda}}{\varepsilon}\int_D\zeta^{\varepsilon,\Lambda}(r', z')dr'dz'-\frac{W(\Theta^{\varepsilon, \Lambda}_+)^2}{2}\log{\frac{1}{\varepsilon}}-\mu^{\varepsilon,\Lambda}+C \\
       & \le \left(\frac{\kappa \Theta^{\varepsilon, \Lambda}_+}{2\pi}-\frac{W(\Theta^{\varepsilon, \Lambda}_+)^2}{2}\right)\log{\frac{1}{\varepsilon}}-\mu^{\varepsilon,\Lambda}+C(\log \Lambda+1), \\
  \end{split}
\end{equation*}
where the positive number $C$ does not depend on $\varepsilon$ and $\Lambda$. On the other hand, by Lemma \ref{le6} we have
\begin{equation*}
  \begin{split}
      \mu^{\varepsilon,\Lambda} & \ge \left(\frac{\kappa \Theta^{\varepsilon, \Lambda}_+}{2\pi}-\frac{W(\Theta^{\varepsilon, \Lambda}_+)^2}{2}\right)\log{\frac{1}{\varepsilon}}-|2\delta_0-1|\kappa^{-1}\int_D\partial_sJ(r,\Lambda)\zeta^{\varepsilon,\Lambda}d\nu \\
       &\ \ \ \ \  -(C+o_\varepsilon(\Lambda))\left(1+\left(\int_D \partial_sJ(r,\Lambda)\zeta^{\varepsilon,\Lambda}d\nu\right)^\frac{1}{2}\right) .
  \end{split}
  \end{equation*}
Thus we have
\begin{equation*}
\begin{split}
     \psi^{\varepsilon, \Lambda}(r,z) \le  &\ |2\delta_0-1|\kappa^{-1}\int_D\partial_sJ(r,\Lambda)\zeta^{\varepsilon,\Lambda}d\nu \\
     & +(C+o_\varepsilon(\Lambda))\left(1+\left(\int_D \partial_sJ(r,\Lambda)\zeta^{\varepsilon,\Lambda}d\nu\right)^\frac{1}{2}\right)+C\log \Lambda,
\end{split}
\end{equation*}
 and the proof is thus complete.
\end{proof}

Now we can choose a suitable $\Lambda$ such that the patch part in \eqref{2-6} will vanish when $\varepsilon$ is suffciently small.
\begin{lemma}\label{le14}
   There exists $\Lambda_0>\max\{1,g(0^+)\}$ such that
 \begin{equation*}
   \zeta^{\varepsilon, \Lambda_0}=\frac{1}{ \varepsilon^2}i(r,\psi^{\varepsilon, \Lambda_0}),\ \ \ \ \ a.e.\  \text{in}\  D,
 \end{equation*}
 provided $\varepsilon>0$ is sufficiently small.
\end{lemma}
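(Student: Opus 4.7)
The plan is to show that, for a suitably chosen $\Lambda_0$, the ``patch'' region $B_{\varepsilon,\Lambda_0}$ of Lemma~\ref{le4} is empty once $\varepsilon$ is small, so the decomposition collapses to the single branch $\zeta^{\varepsilon,\Lambda_0}=\varepsilon^{-2}i(r,\psi^{\varepsilon,\Lambda_0})$. I would argue by contradiction: suppose $(r_0,z_0)\in B_{\varepsilon,\Lambda_0}$, i.e.\ $\psi^{\varepsilon,\Lambda_0}(r_0,z_0)\ge\partial_sJ(r_0,\Lambda_0)$. Then $\zeta^{\varepsilon,\Lambda_0}(r_0,z_0)=\Lambda_0/\varepsilon^2>0$, so $(r_0,z_0)\in \mathrm{supp}(\zeta^{\varepsilon,\Lambda_0})$, which by Lemma~\ref{le12} lies in an arbitrarily small neighborhood of $(r_*,0)$ for $\varepsilon$ small. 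This localization is what turns the coarse global estimate of Lemma~\ref{le13} into a sharp pointwise comparison.

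The quantitative driver is the strict inequality $|2\delta_0-1|<1$ coupled with the growth condition $(a4)$. Inverting the bound $i(r,t)\le C_\tau e^{\tau t}$ at $r=r_*$ for each $\tau>0$ yields $\partial_sJ(r_*,\Lambda)\ge \tau^{-1}\log\Lambda - C$ for large $\Lambda$, hence $\log\Lambda/\partial_sJ(r_*,\Lambda)\to0$; combined with $\partial_sJ(r_*,\Lambda)\to\infty$ this also gives $\sqrt{\partial_sJ(r_*,\Lambda)}=o(\partial_sJ(r_*,\Lambda))$. First I would fix $\eta>0$ small enough that $|2\delta_0-1|(1+\eta)+3\eta<1-\eta$. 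Next, $\Lambda_0>\max\{1,g(0^+)\}$ would be chosen so large that both $C\log\Lambda_0$ and $C\bigl(1+\sqrt{(1+\eta)\kappa\,\partial_sJ(r_*,\Lambda_0)}\bigr)$ are bounded by $\eta\,\partial_sJ(r_*,\Lambda_0)$. Joint continuity of $\partial_sJ(r,s)$ in $r$ (inherited from $i(r,t)=g(t)+f(t)/r^2$ via graph inversion) then provides $\delta>0$ with $|\partial_sJ(r,\Lambda_0)-\partial_sJ(r_*,\Lambda_0)|\le\eta\,\partial_sJ(r_*,\Lambda_0)$ whenever $|r-r_*|<\delta$.

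Now the pieces assemble. For $\varepsilon$ small, Lemma~\ref{le12} places $\mathrm{supp}(\zeta^{\varepsilon,\Lambda_0})\subseteq\{|r-r_*|<\delta\}$, so using $\int_D\zeta^{\varepsilon,\Lambda_0}d\nu=\kappa$ one gets $\int_D\partial_sJ(r,\Lambda_0)\zeta^{\varepsilon,\Lambda_0}d\nu\le(1+\eta)\kappa\,\partial_sJ(r_*,\Lambda_0)$. Substituting into Lemma~\ref{le13} and absorbing the subleading terms through the choice of $\Lambda_0$ and the fact that $o_\varepsilon(\Lambda_0)\to 0$ for $\Lambda_0$ fixed, I would obtain, for $\varepsilon$ sufficiently small,
\begin{equation*}
\psi^{\varepsilon,\Lambda_0}(r,z)\le\bigl(|2\delta_0-1|(1+\eta)+3\eta\bigr)\partial_sJ(r_*,\Lambda_0)<(1-\eta)\partial_sJ(r_*,\Lambda_0)\quad\text{a.e.\ in }D.
\end{equation*}
On the other hand, at the point $(r_0,z_0)\in B_{\varepsilon,\Lambda_0}\subseteq\{|r-r_*|<\delta\}$ one would have $\partial_sJ(r_0,\Lambda_0)\ge(1-\eta)\partial_sJ(r_*,\Lambda_0)$, contradicting $\psi^{\varepsilon,\Lambda_0}(r_0,z_0)\ge\partial_sJ(r_0,\Lambda_0)$. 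Thus $B_{\varepsilon,\Lambda_0}=\emptyset$, and Lemma~\ref{le4} together with the convention that $i(r,t)=0$ for $t\le0$ from $(a2)$ yields the claimed identity on all of $D$.

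The delicate point I expect to wrestle with is the careful tracking of how the two small parameters $\Lambda_0^{-1}$ and $\varepsilon$ interact: the $o_\varepsilon(\Lambda_0)$ notation hides a $\Lambda_0$-dependence that must be tamed by first freezing $\Lambda_0$ and only then sending $\varepsilon\to 0^+$. A secondary subtlety is that the key factor $(1+\eta)$ multiplying $|2\delta_0-1|$ comes from controlling $\int_D\partial_sJ(r,\Lambda_0)\zeta^{\varepsilon,\Lambda_0}d\nu$ via support concentration rather than the crude bound $\kappa\,\partial_sJ(2r_*,\Lambda_0)$; this avoids having to control the potentially unbounded ratio $\partial_sJ(2r_*,\Lambda_0)/\partial_sJ(r_*,\Lambda_0)$ and makes essential use of Lemma~\ref{le12}.
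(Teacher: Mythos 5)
Your argument is correct and follows essentially the same route as the paper: compare the lower bound $\psi^{\varepsilon,\Lambda}\ge\partial_sJ(r,\Lambda)$ on the patch set with the global upper bound of Lemma \ref{le13}, use the concentration of the support from Lemma \ref{le12} to replace $\partial_sJ(r,\Lambda)$ by $\partial_sJ(r_*,\Lambda)$ up to small errors, and invoke $(a4)$ together with $|2\delta_0-1|<1$ to choose $\Lambda_0$ so that the two bounds are incompatible. Your $\eta$-bookkeeping is in fact slightly more careful than the paper's displayed condition \eqref{2-20}, where the term $+C\log\Lambda_0$ should read $-C\log\Lambda_0$ for the comparison to close.
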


\begin{proof}
  Notice that on the patch part $\{\zeta^{\varepsilon, \Lambda}={\Lambda\varepsilon^{-2}}\}$, we have
\begin{equation}\label{2-18}
  \psi^{\varepsilon, \Lambda}(r,z)\ge \partial_s J(r,\Lambda)= \partial_s J(r_*,\Lambda)+o_\varepsilon(\Lambda),
\end{equation}
On the other hand, by Lemmas \ref{le12} and \ref{le13}, we have
\begin{equation}\label{2-19}
\begin{split}
   \psi^{\varepsilon,\Lambda}& \le |2\delta_0-1|\kappa^{-1}\int_D\partial_sJ(r,\Lambda)\zeta^{\varepsilon,\Lambda}d\nu\\
    & \ \ \ \ \ \ +(C+o_\varepsilon(\Lambda))\left(1+\left(\int_D \partial_sJ(r,\Lambda)\zeta^{\varepsilon,\Lambda}d\nu\right)^\frac{1}{2}\right)+C\log \Lambda \\
     & \le |2\delta_0-1|\partial_s J(r_*,\Lambda) +(C+o_\varepsilon(\Lambda))\left(1+\left(\partial_sJ(r_*,\Lambda)\right)^\frac{1}{2}\right)+C\log\Lambda      \ \              \ \ \ \ \ a.e.\  \text{in}\  D.
\end{split}
\end{equation}
By assumption $(a4)$, we see that for all $\tau>0$
\begin{equation*}
  \lim_{s\to+\infty}\left(\partial_s J(r_*,s)-\tau\log s \right)=+\infty.
\end{equation*}
Hence we can choose $\Lambda_0>\max\{1,g(0^+)\}$ such that
\begin{equation}\label{2-20}
  (1-|2\delta_0-1|)\partial_s J(r_*,\Lambda_0)-(C+1)\left(1+\left(\partial_sJ(r_*,\Lambda_0)\right)^\frac{1}{2}\right)+C\log\Lambda_0>1.
\end{equation}
From \eqref{2-18},\eqref{2-19} and \eqref{2-20}, we conclude that $m_2\left( \{\zeta^{\varepsilon, \Lambda_0}={\Lambda_0\varepsilon^{-2}}\}\right)=0$ when $\varepsilon$ is sufficiently small, which completes the proof.
\end{proof}

In fact, we have
\begin{lemma}\label{le15}
For all sufficiently small $\varepsilon$, we have
  \begin{equation*}
   \zeta^{\varepsilon, \Lambda_0}=\frac{1}{ \varepsilon^2}i(r,\psi^{\varepsilon, \Lambda_0}),\ \ \ \ \ a.e.\  \text{in}\  \Pi,
 \end{equation*}
\end{lemma}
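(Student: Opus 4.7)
The plan is to combine Lemma \ref{le14} with a maximum principle on $\Pi \setminus \overline{D}$. By Lemma \ref{le14}, the identity $\zeta^{\varepsilon,\Lambda_0} = \varepsilon^{-2} i(r,\psi^{\varepsilon,\Lambda_0})$ already holds a.e.\ in $D$ for small $\varepsilon$. Since $\zeta^{\varepsilon,\Lambda_0}$ is supported in $D$ by the definition of $\mathcal{A}_{\varepsilon,\Lambda_0}$, the left-hand side vanishes identically on $\Pi \setminus D$; by assumption $(a2)$, the right-hand side vanishes precisely where $\psi^{\varepsilon,\Lambda_0} \le 0$. The whole task therefore reduces to proving $\psi^{\varepsilon,\Lambda_0} \le 0$ on $\Pi \setminus D$. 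As a first step, Lemma \ref{le12} places $\mathrm{supp}(\zeta^{\varepsilon,\Lambda_0})$ at positive distance from $\partial D$, so $\zeta^{\varepsilon,\Lambda_0} \equiv 0$ on an open neighborhood of $\partial D$ inside $D$; Lemma \ref{le14} then gives $i(r,\psi^{\varepsilon,\Lambda_0}) = 0$ there, and $(a2)$ forces $\psi^{\varepsilon,\Lambda_0} \le 0$ a.e.\ in that neighborhood. Continuity of $\psi^{\varepsilon,\Lambda_0} \in W^{2,p}_{\mathrm{loc}}(\Pi)$ extends this inequality to $\partial D \cap \Pi$.

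On $\Omega := \Pi \setminus \overline{D}$, the identities $\mathcal{L}(r^2/2) = 0$ and $\mathcal{L}\mathcal{K}\zeta^{\varepsilon,\Lambda_0} = \zeta^{\varepsilon,\Lambda_0} \equiv 0$ yield $\mathcal{L}\psi^{\varepsilon,\Lambda_0} = 0$. Writing $r^2\mathcal{L} = -\Delta + r^{-1}\partial_r$ displays $\mathcal{L}$ as uniformly elliptic with bounded coefficients on $\{r \ge \delta\}$ for each $\delta > 0$, so the classical weak maximum principle applies on bounded subdomains of $\Omega$ separated from the axis. I would truncate to $\Omega_{R,\delta} := \Omega \cap \{\delta < r < R,\ |z| < R\}$ and verify $\psi^{\varepsilon,\Lambda_0} \le 0$ on each boundary component: on $\partial D \cap \Pi$ by the previous step; on $\{r=\delta\}$ because the near-axis bound $K(r,z,r',z') \lesssim r$ (extracted from Lemma \ref{le1} via $\sinh^{-1}(1/\sigma) \sim 1/\sigma$ for $\sigma \to \infty$) gives $\mathcal{K}\zeta^{\varepsilon,\Lambda_0}(\delta,\cdot) = O(\delta)$, which is outweighed for small $\delta$ by the strict positivity of $\mu^{\varepsilon,\Lambda_0}$ (indeed $\mu^{\varepsilon,\Lambda_0} \to +\infty$ as $\varepsilon \to 0^+$ from Lemma \ref{le7} taking $a = r_*$); on $\{r=R\}$ because the term $-(Wr^2/2)\log(1/\varepsilon)$ beats the at-most-$O(\sqrt{R})$ growth of $\mathcal{K}\zeta^{\varepsilon,\Lambda_0}$ from Lemma \ref{le2}; and on $\{|z|=R\}$ because $K(r,z,r',z') \to 0$ as $|z-z'| \to \infty$ uniformly for $r$ in compact intervals, leaving $\psi^{\varepsilon,\Lambda_0} \le -\mu^{\varepsilon,\Lambda_0} + o(1) < 0$. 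The maximum principle then yields $\psi^{\varepsilon,\Lambda_0} \le 0$ in $\Omega_{R,\delta}$; sending $R \to \infty$ followed by $\delta \to 0^+$ delivers the inequality on all of $\Omega$, and together with the boundary estimate concludes the proof.

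The main obstacle is the degeneracy of $\mathcal{L}$ at the axis $\{r=0\}$, where standard maximum-principle machinery breaks down; this is circumvented by the explicit decay $K(r,z,r',z') \lesssim r$ near the axis, which combined with $\mu^{\varepsilon,\Lambda_0} > 0$ supplies a strict negative barrier on $\{r=\delta\}$ for all sufficiently small $\delta$. The behavior at spatial infinity is comparatively easy: the quadratic and additive pieces $-(Wr^2/2)\log(1/\varepsilon) - \mu^{\varepsilon,\Lambda_0}$ in the definition of $\psi^{\varepsilon,\Lambda_0}$ overwhelm the modest Green-function growth controlled by Lemma \ref{le2}.
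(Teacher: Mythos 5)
Your proposal is correct and follows essentially the same route as the paper: reduce via Lemma \ref{le14} and assumption $(a2)$ to showing $\psi^{\varepsilon,\Lambda_0}\le 0$ on $\Pi\setminus D$, observe $\mathcal{L}\psi^{\varepsilon,\Lambda_0}=0$ there with nonpositive boundary data and nonpositive behavior at infinity, and conclude by the maximum principle. You simply supply the details the paper leaves implicit (the barrier near the axis from $K\lesssim r$, the growth control at infinity, and the sign of $\psi^{\varepsilon,\Lambda_0}$ on $\partial D$ via Lemma \ref{le12}), and your conclusion $\psi^{\varepsilon,\Lambda_0}\le 0$ is the correct form of what the paper writes as ``$=0$''.
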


\begin{proof}
By Lemma \ref{le14}, it suffices to show that $\psi^{\varepsilon, \Lambda_0}=0$ a.e. on $\Pi\backslash D$. By Lemma \ref{le12}, we have
 \begin{equation*}
  \begin{split}
      \mathcal{L}\psi^{\varepsilon, \Lambda_0}&=0\ \ \text{in}\  \Pi\backslash D, \\
      \psi^{\varepsilon, \Lambda_0}&\le 0\ \ \text{on}\  \partial \left( \Pi\backslash D\right), \\
     \psi^{\varepsilon, \Lambda_0} &\le 0 \ \ \text{at}\  \infty .
  \end{split}
\end{equation*}
By the maximum principle, we conclude that $\psi^{\varepsilon, \Lambda_0}=0$ a.e. on $\Pi\backslash D$. The proof is therefore complete.
\end{proof}

In the rest of this section, we shall abbreviate $(\zeta^{\varepsilon, \Lambda_0},\psi^{\varepsilon, \Lambda_0}, \mu^{\varepsilon, \Lambda_0} , \Theta^{\varepsilon, \Lambda}_\pm)$ to $(\zeta^\varepsilon, \psi^\varepsilon, \mu^\varepsilon, \Theta^\varepsilon_\pm)$.

We now sharpen Lemmas \ref{le5} and \ref{le7} as follows.

\begin{lemma}\label{le16}
As $\varepsilon\to 0^+$, we have
\begin{align}
\label{2-21} \mathcal{E}_\varepsilon(\zeta^\varepsilon) & =\left(\frac{\kappa^2r_*}{4\pi}-\frac{\kappa Wr_*^2}{2}\right)\log{\frac{1}{\varepsilon}}+O(1), \\
\label{2-22}  \mu^\varepsilon & =\left(\frac{\kappa r_*}{2\pi}-\frac{Wr_*^2}{2}\right)\log{\frac{1}{\varepsilon}}+O(1).
\end{align}
\end{lemma}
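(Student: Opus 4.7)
The plan is to prove the two asymptotics in sequence: first the energy estimate \eqref{2-21}, then the Lagrange-multiplier estimate \eqref{2-22} via an identity that ties them together.

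\emph{Step 1 (energy estimate \eqref{2-21}).} The lower bound is immediate from Lemma~\ref{le5} with the choice $a=r_*$. For the matching upper bound, I would estimate each of the three terms of $\mathcal{E}_\varepsilon(\zeta^\varepsilon)$ separately. By the concentration results (Lemmas \ref{le8}--\ref{le12}), the support of $\zeta^\varepsilon$ lies in a ball $B_{C\varepsilon^\gamma}\!\bigl((\bar r_\varepsilon,0)\bigr)$ for any $\gamma\in(0,1)$, where $\bar r_\varepsilon$ is the mass center (and $\bar r_\varepsilon\to r_*$). Inserting the Green's function expansion of Lemma~\ref{le2} and applying a bathtub rearrangement inequality to the weighted logarithmic kernel with $\tilde\zeta^\varepsilon:=r\zeta^\varepsilon$ (which satisfies $\tilde\zeta^\varepsilon\le 2r_*\Lambda_0/\varepsilon^2$ and $\int\tilde\zeta^\varepsilon\,drdz=\kappa$), one gets $\tfrac{1}{2}\int\zeta^\varepsilon\mathcal{K}\zeta^\varepsilon d\nu\le \tfrac{\kappa^2\bar r_\varepsilon}{4\pi}\log\tfrac{1}{\varepsilon}+O(1)$. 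Combined with $\int r^2\zeta^\varepsilon d\nu=\bar r_\varepsilon^{\,2}\kappa+O(\varepsilon^{2\gamma})$ (from Lemma~\ref{le12}) and discarding the non-negative $\varepsilon^{-2}\!\int J\,d\nu$ term, this gives $\mathcal{E}_\varepsilon(\zeta^\varepsilon)\le\tfrac{\kappa\Gamma(\bar r_\varepsilon)}{2}\log\tfrac{1}{\varepsilon}+O(1)\le\tfrac{\kappa\Gamma(r_*)}{2}\log\tfrac{1}{\varepsilon}+O(1)$, which together with Lemma~\ref{le5} yields \eqref{2-21}.

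\emph{Step 2 (identity).} Using Lemma~\ref{le15} ($\psi^\varepsilon=\partial_s J(r,\varepsilon^2\zeta^\varepsilon)$ on the support), assumption $(a3)'$, and the uniform bound $\psi^\varepsilon\le C$ (Lemma~\ref{le13} with $\Lambda_0$ fixed), one shows that both $\varepsilon^{-2}\!\int J(r,\varepsilon^2\zeta^\varepsilon)d\nu$ and $\int\zeta^\varepsilon\psi^\varepsilon\,d\nu$ are $O(1)$. Pairing the representation $\psi^\varepsilon=\mathcal{K}\zeta^\varepsilon-\tfrac{Wr^2}{2}\log\tfrac{1}{\varepsilon}-\mu^\varepsilon$ with $\zeta^\varepsilon$ and rearranging then produces the identity
\[
\mu^\varepsilon\kappa \;=\; 2\mathcal{E}_\varepsilon(\zeta^\varepsilon) \;+\; \frac{W}{2}\log\frac{1}{\varepsilon}\int r^2\zeta^\varepsilon\,d\nu \;+\; O(1).
\]

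\emph{Step 3 (from identity to \eqref{2-22}).} Substituting \eqref{2-21} into the identity reduces \eqref{2-22} to the sharp moment estimate $\int r^2\zeta^\varepsilon d\nu=\kappa r_*^2+O\!\bigl(1/\log(1/\varepsilon)\bigr)$. This is obtained by matching Lemma~\ref{le7} at $a=r_*$, namely $\mu^\varepsilon\ge Wr_*^2\log\tfrac{1}{\varepsilon}+\tfrac{W}{2\kappa}\log\tfrac{1}{\varepsilon}\int r^2\zeta^\varepsilon d\nu-O(1)$, with an upper bound on $\mu^\varepsilon$ coming from evaluating $\psi^\varepsilon(\bar r_\varepsilon,0)\ge 0$ and controlling $\mathcal{K}\zeta^\varepsilon(\bar r_\varepsilon,0)\le \tfrac{\bar r_\varepsilon\kappa}{2\pi}\log\tfrac{1}{\varepsilon}+O(1)$ by the same bathtub rearrangement as in Step~1. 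The two bounds together pin down $\int r^2\zeta^\varepsilon d\nu$ to within $O(1/\log(1/\varepsilon))$ of $\kappa r_*^2$, and plugging this back into the identity yields \eqref{2-22}.

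\emph{Main obstacle.} The delicate part is Step~3, i.e.\ converting the $o(1)$ rate from Lemma~\ref{le9} into the sharp $O(1/\log(1/\varepsilon))$ rate for the second moment. The difficulty is asymmetric in nature: $\mathcal{E}_\varepsilon$ depends on $\Gamma(\bar r_\varepsilon)$, which is flat (quadratic) at its maximum $r_*$, so the energy comparison alone only yields $|\bar r_\varepsilon-r_*|=O\!\bigl(1/\sqrt{\log(1/\varepsilon)}\bigr)$; however $\mu^\varepsilon$ depends on the linearly sensitive quantity $h(t)=\tfrac{t\kappa}{2\pi}-\tfrac{Wt^2}{2}$ at $r_*$, so a naive substitution would only produce $\mu^\varepsilon=C_1\log(1/\varepsilon)+O(\sqrt{\log(1/\varepsilon)})$. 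Recovering the sharp $O(1)$ error requires the coupled matching of Step~3 rather than treating the bounds in isolation.
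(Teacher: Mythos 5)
Your Steps 1 and 2 follow the paper's own route: the lower bound in \eqref{2-21} comes from Lemma~\ref{le5}, the upper bound from the expansion of Lemma~\ref{le2} together with a logarithmic-kernel rearrangement estimate (the paper invokes Lemma~4.2 of Turkington's work rather than a bathtub argument, but the content is the same), and \eqref{2-22} is then extracted from the identity $2\mathcal{E}_\varepsilon(\zeta^\varepsilon)=\int\zeta^\varepsilon\psi^\varepsilon d\nu-\tfrac{W}{2}\log\tfrac{1}{\varepsilon}\int r^2\zeta^\varepsilon d\nu-\tfrac{2}{\varepsilon^2}\int J\,d\nu+\kappa\mu^\varepsilon$, with the first and third terms controlled by the uniform bound on $\psi^\varepsilon$. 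Up to this point your proposal is correct and matches the paper.

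The gap is in Step 3, and your own ``main obstacle'' paragraph essentially contains the reason it does not close. Write $t=\bar r_\varepsilon-r_*$ and use that the support has diameter $O(\varepsilon^{1/2})$, so $\int r^2\zeta^\varepsilon d\nu=\kappa\bar r_\varepsilon^{\,2}+O(\varepsilon^{1/2})$. Your two bounds read, after dividing by $\log\tfrac{1}{\varepsilon}$,
\begin{equation*}
\frac{3Wr_*^2}{2}+Wr_*t+\frac{Wt^2}{2}-O\Bigl(\tfrac{1}{\log(1/\varepsilon)}\Bigr)\;\le\;\frac{\mu^\varepsilon}{\log\frac{1}{\varepsilon}}\;\le\;\frac{3Wr_*^2}{2}+Wr_*t-\frac{Wt^2}{2}+O\Bigl(\tfrac{1}{\log(1/\varepsilon)}\Bigr),
\end{equation*}
i.e.\ the first-order term $Wr_*t$ appears with the \emph{same} sign and coefficient on both sides, so the coupled matching cancels nothing linear: it only forces $Wt^2\le O(1/\log(1/\varepsilon))$, hence $|t|=O(1/\sqrt{\log(1/\varepsilon)})$, hence $\int r^2\zeta^\varepsilon d\nu=\kappa r_*^2+O(1/\sqrt{\log(1/\varepsilon)})$ and $\mu^\varepsilon=\tfrac{3Wr_*^2}{2}\log\tfrac{1}{\varepsilon}+O(\sqrt{\log(1/\varepsilon)})$, which is strictly weaker than \eqref{2-22}. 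The missing ingredient is the sharper localization $|\Theta^\varepsilon_\pm-r_*|=O(1/\log(1/\varepsilon))$, which cannot come from any comparison in which the position enters only through the degenerate quantity $\Gamma$; one needs a genuinely first-order argument, e.g.\ comparing $\mathcal{E}_\varepsilon(\zeta^\varepsilon)$ with the energy of the radial translate of $\zeta^\varepsilon$ recentred at $r_*$, where the $O(1)$ self-energy terms cancel up to $O(|t|)$ and the inequality $\tfrac{\kappa W}{2}t^2\log\tfrac{1}{\varepsilon}\le C|t|$ yields $|t|=O(1/\log(1/\varepsilon))$. To be fair, the paper's own proof of \eqref{2-22} is a single sentence (``from which \eqref{2-22} follows'') and suppresses exactly this point, so you have correctly identified where the difficulty lives; but the mechanism you propose to resolve it does not, as stated, deliver the $O(1)$ error.
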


\begin{proof}
We first prove $\eqref{2-21}$. According to Lemma $\ref{le12}$, there holds
\begin{equation*}
  supp\,(\zeta^\varepsilon)\subseteq B_{8r_*\varepsilon^{\frac{1}{2}}}\left((\Theta^{\varepsilon}_+,0)\right)
\end{equation*}
for all sufficiently small $\varepsilon$. Hence, by Lemmas $\ref{le1}$ and $\ref{le2}$, we have
\begin{equation*}
  \int_D\zeta^\varepsilon \mathcal{K} \zeta^\varepsilon d\nu \le\frac{(\Theta^{\varepsilon}_+)^3}{2\pi}\iint_{D\times D}\log[(r-r')^2+(z-z')^2]^{-\frac{1}{2}}\zeta^\varepsilon(r,z)\zeta^\varepsilon(r',z')drdzdr'dz'+O(1).
\end{equation*}
By Lemma 4.2 of \cite{Tur83}, we have
\begin{equation*}
\iint_{D\times D}\log[(r-r')^2+(z-z')^2]^{-\frac{1}{2}}\zeta^\varepsilon(r,z)\zeta^\varepsilon(r',z')drdzdr'dz'\le \log\frac{1}{\varepsilon}\left(\int_D\zeta^\varepsilon drdz\right)^2+O(1).
\end{equation*}
Notice that
\begin{equation*}
 \int_D\zeta^\varepsilon drdz\le \frac{\kappa}{\Theta^\varepsilon_-}.
\end{equation*}
Therefore
\begin{equation*}
   \int_D\zeta^\varepsilon \mathcal{K} \zeta^\varepsilon d\nu\le \frac{\kappa^2(\Theta^\varepsilon_+)^3}{2\pi (\Theta^\varepsilon_-)^2}\log\frac{1}{\varepsilon}+O(1)\le \frac{\kappa^2\Theta^\varepsilon_-}{2\pi}\log\frac{1}{\varepsilon}+O(1),
\end{equation*}
from which we conclude that
\begin{equation*}
  \mathcal{E}_\varepsilon(\zeta^\varepsilon)\le \left(\frac{\kappa^2\Theta^\varepsilon_-}{4\pi}-\frac{\kappa W(\Theta^\varepsilon_-)^2}{2}\right)\log{\frac{1}{\varepsilon}}+O(1)\le \left(\frac{\kappa^2r_*}{4\pi}-\frac{\kappa Wr_*^2}{2}\right)\log{\frac{1}{\varepsilon}}+O(1).
\end{equation*}
Combining the above inequality with Lemma $\ref{le5}$, we get $\eqref{2-21}$. Note that
\begin{equation*}
     2\mathcal{E}_\varepsilon(\zeta^\varepsilon)
       = \int_D{\zeta^\varepsilon }\psi^\varepsilon d\nu-{\frac{W}{2}\log{\frac{1}{\varepsilon}}}\int_{D}r^2 \zeta^\varepsilon d\nu -\frac{2}{\varepsilon^2}\int_D J(r,\varepsilon^2\zeta^\varepsilon)d\nu+\kappa \mu^\varepsilon ,
\end{equation*}
from which $\eqref{2-22}$ follows and the proof is thus complete.
\end{proof}

 We can also improve Lemma \ref{le12} as follows.
\begin{lemma}\label{le17}
There exists constants $R_0, R_1>0$ independent of $\varepsilon$ such that $$R_0\varepsilon\le diam\left(supp(\zeta^\varepsilon)\right)\le R_1\varepsilon.$$
\end{lemma}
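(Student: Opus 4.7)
The lower bound follows from the uniform upper bound on $\psi^\varepsilon$: Lemma~\ref{le13} (applied with $\Lambda=\Lambda_0$ of Lemma~\ref{le14}) gives $\psi^\varepsilon\le C$, and then $(a2)$ yields $\zeta^\varepsilon=\varepsilon^{-2}i(r,\psi^\varepsilon)\le C_0/\varepsilon^2$. Together with $\int\zeta^\varepsilon\,d\nu=\kappa$ and the fact that $r\in[\Theta^\varepsilon_-,\Theta^\varepsilon_+]$ with both endpoints tending to $r_*$ (Lemmas~\ref{le8} and~\ref{le11}), this forces $m_2(supp(\zeta^\varepsilon))\ge c\varepsilon^2$, and hence $diam(supp(\zeta^\varepsilon))\ge R_0\varepsilon$.

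For the upper bound, my plan is to compare upper and lower estimates of $\mathcal{K}\zeta^\varepsilon$ at a diameter-realising point. Pick $y_+,y_-\in supp(\zeta^\varepsilon)$ with $|y_+-y_-|=d:=diam(supp(\zeta^\varepsilon))$. Since $\psi^\varepsilon(y_+)>0$ and the $r$-coordinate of $y_+$ tends to $r_*$ by Lemma~\ref{le12} (so that $\tfrac{W r_+^2}{2}\log(1/\varepsilon)=\tfrac{W r_*^2}{2}\log(1/\varepsilon)+o(1)$), the sharp asymptotic $\mu^\varepsilon=\bigl(\tfrac{\kappa r_*}{2\pi}-\tfrac{Wr_*^2}{2}\bigr)\log(1/\varepsilon)+O(1)$ from Lemma~\ref{le16} yields
\[
\mathcal{K}\zeta^\varepsilon(y_+)\ \ge\ \frac{r_*\kappa}{2\pi}\log(1/\varepsilon)+O(1).
\]

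Conversely, Lemma~\ref{le2} together with $r\approx r_*$ on $supp(\zeta^\varepsilon)$ gives $\mathcal{K}\zeta^\varepsilon(y_+)=-\frac{r_*}{2\pi}\int\log|y_+-y'|\,\zeta^\varepsilon(y')\,d\nu(y')+O(1)$. I would split this integral over $supp(\zeta^\varepsilon)$ into the near part $B_{d/2}(y_+)$, of $\nu$-mass $\mu_+$, and its complement in $supp(\zeta^\varepsilon)$, of $\nu$-mass $\kappa-\mu_+$. Since $\zeta^\varepsilon\le C_0/\varepsilon^2$ and the kernel $-\log|y_+-\cdot|$ is symmetric decreasing about $y_+$, the Hardy--Littlewood rearrangement inequality (applied to $\zeta^\varepsilon r$ against Lebesgue measure, with $r\approx r_*$) bounds the near contribution by that of the concentrated profile $(C_0/\varepsilon^2)\chi_{B_\rho(y_+)}$ with $\pi\rho^2\,r_*\,C_0/\varepsilon^2=\mu_+$; explicit evaluation yields $\mu_+\log(1/\varepsilon)+O(1)$. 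On the far part one has $|y_+-y'|\ge d/2$, so its contribution is at most $(\kappa-\mu_+)\log(2/d)$. Combining these with the lower bound above forces
\[
(\kappa-\mu_+)\log\!\Bigl(\frac{d}{2\varepsilon}\Bigr)\ \le\ C.
\]

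Running the same argument at $y_-$ gives $(\kappa-\mu_-)\log(d/(2\varepsilon))\le C$. Since $|y_+-y_-|=d$, the balls $B_{d/2}(y_\pm)$ have disjoint interiors, so $\mu_++\mu_-\le\kappa$, whence $\max\{\kappa-\mu_+,\kappa-\mu_-\}\ge\kappa/2$. This forces $\log(d/(2\varepsilon))\le 2C/\kappa$, i.e.\ $d\le R_1\varepsilon$. The main technical obstacle is the rearrangement step with respect to the weighted measure $d\nu=r\,dr\,dz$; this is manageable precisely because the support is already confined (by Lemmas~\ref{le11} and~\ref{le12}) to a thin tube about the circle $\{r=r_*\}$, so that $r$ is nearly constant on $supp(\zeta^\varepsilon)$ and the rearrangement reduces to the standard planar Lebesgue one up to a multiplicative factor of $r_*$.
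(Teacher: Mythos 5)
Your lower bound is exactly the paper's argument, and the architecture of your upper bound (split $\mathcal{K}\zeta^\varepsilon$ at a support point into a near part, controlled by rearrangement against the $L^\infty$ bound $\zeta^\varepsilon\le \Lambda_0\varepsilon^{-2}$, and a far part, controlled by $\log(1/\text{dist})$; compare against the lower bound $\mathcal{K}\zeta^\varepsilon\ge\mu^\varepsilon+\tfrac{Wr^2}{2}\log\frac1\varepsilon$ coming from $\psi^\varepsilon\ge0$ on the support and the sharp asymptotics of $\mu^\varepsilon$) is the same as the paper's. Your endgame differs harmlessly: you take two diameter-realizing points and use disjointness of the half-balls to force $\max\{\kappa-\mu_+,\kappa-\mu_-\}\ge\kappa/2$, whereas the paper shows that \emph{every} support point carries more than $\kappa/2$ of the mass within distance $4r_*R\varepsilon$; both close the argument.

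There is, however, one genuine gap. You replace the radial coordinate $r_+$ of the support point by $r_*$ inside quantities that are multiplied by $\log\frac1\varepsilon$, asserting $\tfrac{Wr_+^2}{2}\log\frac1\varepsilon=\tfrac{Wr_*^2}{2}\log\frac1\varepsilon+o(1)$ and likewise taking the kernel coefficient $\sqrt{r_+r'}/(2\pi)$ to be $r_*/(2\pi)+o(1)$ before comparing with Lemma \ref{le16}. Lemma \ref{le12} gives only $|r_+-r_*|\to0$ with \emph{no rate}, so $|r_+^2-r_*^2|\log\frac1\varepsilon$ is not known to be $o(1)$, nor even $O(1)$; since $\frac{d}{dt}\bigl(\tfrac{\kappa t}{2\pi}-\tfrac{Wt^2}{2}\bigr)\big|_{t=r_*}=\tfrac{\kappa}{4\pi}\neq0$, this error does not cancel and could in principle swamp the $O(1)$ remainder you need, destroying the inequality $(\kappa-\mu_+)\log(d/(2\varepsilon))\le C$. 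The repair is the device the paper uses: express \emph{both} sides of the comparison in terms of the same $\varepsilon$-dependent radius (e.g.\ $\Theta^\varepsilon_-$, or $r_+$ itself), obtaining the lower bound for $\mu^\varepsilon$ directly from Lemma \ref{le7} with $a=\Theta^\varepsilon_-$ (this is \eqref{2-25}) rather than from the $r_*$-form of Lemma \ref{le16}, and noting that by Lemma \ref{le12} all radii occurring in the support differ by $O(\varepsilon^{1/2})$, so that swapping among them costs only $O(\varepsilon^{1/2}\log\frac1\varepsilon)=o(1)$. Then the leading terms $\tfrac{\Theta^\varepsilon_-\kappa}{2\pi}\log\frac1\varepsilon$ and $\tfrac{W(\Theta^\varepsilon_-)^2}{2}\log\frac1\varepsilon$ cancel identically and only the $(\kappa-\mu_\pm)\log(d/(2\varepsilon))$ term survives, as you intended. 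With that substitution your proof goes through.
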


\begin{proof}
First, we clearly have
\begin{equation*}
  \kappa=\int_D\zeta^\varepsilon d\nu\le C [diam\left(supp(\zeta^\varepsilon)\right)]^2/\varepsilon^2,
\end{equation*}
from which it follows that there exists a $R_0>0$ such that $$ diam\left(supp(\zeta^\varepsilon)\right)\ge R_0\varepsilon.$$
So, it remains to be proved that $diam\left(supp(\zeta^\varepsilon)\right)=O(\varepsilon)$.
Let us use the same notation as in the proof of Lemma $\ref{le6}$. According to Lemma $\ref{le12}$, there holds
\begin{equation*}
  supp\,(\zeta^\varepsilon)\subseteq B_{8r_*\varepsilon^{\frac{1}{2}}}\left((\Theta^{\varepsilon}_+,0)\right)
\end{equation*}
for all sufficiently small $\varepsilon$.
Let $R>1$ to be determined. By Lemma $\ref{le2}$, we have
\begin{equation}\label{2-23}
\begin{split}
   I_1 &=\int_{D\cap\{\sigma>R\varepsilon\}}K(r_\varepsilon,z_\varepsilon\,r',z')\zeta^\varepsilon(r',z')r'dr'dz'\\
       &\le \frac{(\Theta^\varepsilon_-)^2+O(\varepsilon^{\frac{1}{2}})}{2\pi}\int_{D\cap\{\sigma>R\varepsilon\}}\log{\frac{1}{\sigma}}\zeta^\varepsilon(r',z') dr'dz'+C\\
       &\le \frac{(\Theta^\varepsilon_-)^2+O(\varepsilon^{\frac{1}{2}})}{2\pi}\log{\frac{1}{R\varepsilon}}\int_{D\cap\{\sigma>\varepsilon\}}\zeta^\varepsilon(r',z') dr'dz'+C\\
       &\le \frac{\Theta^\varepsilon_-}{2\pi}\log{\frac{1}{R\varepsilon}}\int_{D\cap\{\sigma>R\varepsilon\}}\zeta^\varepsilon d\nu+C,
\end{split}
\end{equation}
and
\begin{equation}\label{2-24}
\begin{split}
   I_2 &=\int_{D\cap\{\sigma \le R\varepsilon\}}K(r_\varepsilon,z_\varepsilon,r',z')\zeta^\varepsilon(r',z')r'dr'dz'\\
       &\le \frac{(\Theta^\varepsilon_-)^2+O(\varepsilon^\frac{1}{2})}{2\pi}\int_{D\cap\{\sigma \le R\varepsilon\}}\log\frac{1}{[(r_\varepsilon-r')^2+(z_\varepsilon-z')^2]^{\frac{1}{2}}}\zeta^\varepsilon(r',z')dr'dz'+C\\
       &\le \frac{(\Theta^\varepsilon_-)^2+O(\varepsilon^\frac{1}{2})}{2\pi}\log\frac{1}{\varepsilon}\int_{D\cap\{\sigma \le R\varepsilon\}}\zeta^\varepsilon dr'dz'+C\\
       & \le \frac{\Theta^\varepsilon_-}{2\pi}\log\frac{1}{\varepsilon}\int_{D\cap\{\sigma \le R\varepsilon\}}\zeta^\varepsilon d\nu+C.
\end{split}
\end{equation}
On the other hand, by Lemma $\ref{le16}$, one has
\begin{equation}\label{2-25}
  \mu^\varepsilon\ge\left(\frac{\kappa \Theta^\varepsilon_-}{2\pi}-\frac{W(\Theta^\varepsilon_-)^2}{2}\right)\log\frac{1}{\varepsilon}-C.
\end{equation}
Combining $\eqref{2-23}$, $\eqref{2-24}$ and $\eqref{2-25}$, we obtain
\begin{equation*}
\begin{split}
     \frac{\kappa \Theta^\varepsilon_-}{2\pi}\log\frac{1}{\varepsilon} & \le \frac{\Theta^\varepsilon_-}{2\pi}\log{\frac{1}{R\varepsilon}}\int_{D\cap\{\sigma>R\varepsilon\}}\zeta^\varepsilon d\nu+\frac{\Theta^\varepsilon_-}{2\pi}\log\frac{1}{\varepsilon}\int_{D\cap\{\sigma \le R\varepsilon\}}\zeta^\varepsilon d\nu+C\\
     & \le \frac{\kappa \Theta^\varepsilon_-}{2\pi}\log\frac{1}{\varepsilon}+\frac{\Theta^\varepsilon_-}{2\pi}\log\frac{1}{R}\int_{D\cap\{\sigma>R\varepsilon\}}\zeta^\varepsilon d\nu+C.
\end{split}
\end{equation*}
Hence
\begin{equation*}
  \int_{D\cap\{\sigma \le R\varepsilon\}}\zeta^\varepsilon d\nu\ge \kappa-\frac{C}{\log R}.
\end{equation*}
Taking $R$ large enough such that $C(\log R)^{-1}<\kappa/2$, we obtain
\begin{equation*}
  \int_{D\cap{B_{4r_*R\varepsilon}\left((r_\varepsilon,z_\varepsilon)\right)}}\zeta^\varepsilon d\nu\ge\int_{D\cap\{\sigma \le R\varepsilon\}}\zeta^\varepsilon d\nu> \frac{\kappa}{2}.
\end{equation*}
Taking $R_1=8r_*R$, we clearly get the desired result.
\end{proof}

We now investigate the asymptotic shape of the optimal vortices.
Define the center of vorticity by
\begin{equation*}
  X^\varepsilon=(R^\varepsilon,0)=\frac{\int_D x\zeta^\varepsilon(x)d\textit{m}_2(x)}{\int_D \zeta^\varepsilon(x)d\textit{m}_2(x)}.
\end{equation*}
Then by Lemma $\ref{le12}$ we know that $X^\varepsilon \to (r_*,0)$ as $\varepsilon \to 0^+$. Let $\phi^\varepsilon $ be defined by
\begin{equation*}
  \phi^\varepsilon(x)={\varepsilon^2}\zeta^\varepsilon(X^\varepsilon+\varepsilon x),\ \ x\in D^\varepsilon:=\{x\in \mathbb{R}^2\mid X^\varepsilon+\varepsilon x \in \Pi\}
\end{equation*}
For convenience, we set $\phi^\varepsilon(x)=0$ if $x\in \mathbb{R}^2 \backslash D^\varepsilon$. It is easy to see that $supp(\phi^\varepsilon)\subset B_{R_0}(0)$ and $0\le \phi^\varepsilon\le \Lambda_0$. Moreover, as $\varepsilon\to 0^+$,
\begin{equation}\label{ad}
  \int_{\mathbb{R}^2} \phi^\varepsilon(x) dm_2(x)=\int_\Pi\zeta^\varepsilon(r,z)drdz=\kappa/r_*+o(1)=4\pi W+o(1).
\end{equation}

 We denote by $\bar{\phi}^\varepsilon$ the symmetric radially nonincreasing Lebesgue-rearrangement of $\phi^\varepsilon$ centered at the origin.

The following result determines the asymptotic nature of $\zeta^\varepsilon$ in terms of its scaled version.

\begin{lemma}\label{le18}
Every accumulation point of the family $\{\phi^\varepsilon:\varepsilon>0\}$ as $\varepsilon \to 0^+$, in the weak topology of $L^2$, must be a radially nonincreasing function.
\end{lemma}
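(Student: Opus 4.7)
The plan is a rearrangement comparison argument. The maximality of $\zeta^\varepsilon$ combined with Riesz's rearrangement inequality will force the rescaled vortex $\phi^\varepsilon$ to agree with its symmetric decreasing rearrangement $\bar\phi^\varepsilon$ at leading order, and the equality case will then deliver radial symmetry of the weak limit. First, since $0 \le \phi^\varepsilon \le \Lambda_0$ with common compact support in $B_{R_0}(0)$, extract a subsequence with $\phi^\varepsilon \rightharpoonup \phi^0$ weakly in $L^2(\mathbb{R}^2)$. By \eqref{ad}, $\int \phi^0\, dm_2 = 4\pi W$; a change of variables using the definition of $X^\varepsilon$ shows $\int_{\mathbb{R}^2} x\,\phi^\varepsilon(x)\,dm_2(x) = 0$ identically, hence also $\int x\,\phi^0\,dm_2 = 0$.

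Next, define the admissible competitor $\tilde\zeta^\varepsilon(r,z) := \varepsilon^{-2}\bar\phi^\varepsilon(\varepsilon^{-1}((r,z)-X^\varepsilon))$; Lemmas \ref{le11} and \ref{le17} together with the mass, $L^\infty$-bound and support preservation under rearrangement give $\tilde\zeta^\varepsilon \in \mathcal{A}_{\varepsilon,\Lambda_0}$ for small $\varepsilon$. Using \eqref{2-2} and the rescaling $(r,z) = X^\varepsilon + \varepsilon x$, for any $\zeta = \varepsilon^{-2}\phi(\varepsilon^{-1}(\cdot-X^\varepsilon))$ a direct computation yields
\begin{equation*}
\mathcal{E}_\varepsilon(\zeta) = \frac{(R^\varepsilon)^3}{4\pi}\mathcal{W}(\phi) + \Phi_\varepsilon(\phi) + o(1),
\end{equation*}
where $\mathcal{W}(\phi) := \iint \phi(x)\phi(y)\log(1/|x-y|)\,dm_2(x)\,dm_2(y)$ and $\Phi_\varepsilon(\phi)$ collects the remaining contributions. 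Because $r = R^\varepsilon + O(\varepsilon)$ on $\mathrm{supp}\,\zeta$, $\Phi_\varepsilon(\phi)$ depends on $\phi$ only through its mass and distribution function up to $o(1)$; hence $\Phi_\varepsilon(\phi^\varepsilon) = \Phi_\varepsilon(\bar\phi^\varepsilon) + o(1)$. Maximality $\mathcal{E}_\varepsilon(\zeta^\varepsilon) \ge \mathcal{E}_\varepsilon(\tilde\zeta^\varepsilon)$ then gives $\mathcal{W}(\phi^\varepsilon) \ge \mathcal{W}(\bar\phi^\varepsilon) - o(1)$, while Riesz's inequality provides the reverse, so $\mathcal{W}(\bar\phi^\varepsilon) - \mathcal{W}(\phi^\varepsilon) \to 0$.

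Finally, since the operator with kernel $\log(1/|x-y|)$ is compact on the class of uniformly $L^\infty$-bounded functions with common compact support, $\mathcal{W}$ is weakly continuous on this class. Along a further subsequence $\bar\phi^\varepsilon \rightharpoonup \psi^0$, with $\psi^0$ radial nonincreasing (as a weak limit of such), and $\mathcal{W}(\phi^0) = \mathcal{W}(\psi^0)$. Repeating the competitor construction with any polarization $\phi^{\varepsilon,\ell}$ (reflection of $\phi^\varepsilon$ across a line $\ell$ through the origin) in place of $\bar\phi^\varepsilon$—the polarization preserves distribution and yields an admissible competitor—produces the analogous $\mathcal{W}(\phi^0) = \mathcal{W}((\phi^0)^\ell)$ for every such $\ell$. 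The Brock--Solynin equality case of polarization then forces $\phi^0$ to be invariant under reflection across every line through $0$, hence radial; combined with Lieb's equality case of Riesz (with the zero-first-moment condition fixing the translation), $\phi^0$ coincides with its symmetric decreasing rearrangement about $0$, i.e.\ $\phi^0$ is radial nonincreasing.

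The main obstacle is controlling the $J$-term in $\Phi_\varepsilon$: since $J(r,\cdot)$ depends on $r$, the integral $\varepsilon^{-2}\int J(r,\varepsilon^2\zeta)\,d\nu$ is not strictly distribution-invariant, and one must exploit $r = R^\varepsilon + O(\varepsilon)$ on $\mathrm{supp}\,\zeta$ together with growth condition $(a4)$ to obtain the $o(1)$ cancellation between $\zeta^\varepsilon$ and $\tilde\zeta^\varepsilon$. A secondary point is verifying that each polarized competitor remains in $\mathcal{A}_{\varepsilon,\Lambda_0}$, which follows from Lemmas \ref{le11} and \ref{le17}, since polarization does not enlarge the $O(\varepsilon)$-diameter support and $X^\varepsilon \to (r_*,0)$ lies in the interior of $D$.
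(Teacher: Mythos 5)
Your argument tracks the paper's proof up to and including the identity $\mathcal{W}(\phi^0)=\mathcal{W}(\psi^0)$, where $\psi^0$ denotes a weak limit of the rearrangements $\bar\phi^\varepsilon$: the competitor $\tilde\zeta^\varepsilon$ built from $\bar\phi^\varepsilon$, the energy expansion with cancellation of the remainder terms, Riesz's inequality, and the weak continuity of the logarithmic interaction on uniformly bounded families with common compact support are exactly the steps the paper takes. The divergence, and the gap, lies in how you extract radial monotonicity of $\phi^0$ from this identity. Your polarization step claims that $\mathcal{W}(\phi^{\varepsilon,\ell})-\mathcal{W}(\phi^\varepsilon)\to 0$ yields $\mathcal{W}\bigl((\phi^0)^\ell\bigr)=\mathcal{W}(\phi^0)$; this requires $\phi^{\varepsilon,\ell}\rightharpoonup(\phi^0)^\ell$, i.e.\ weak continuity of the polarization map, which is false --- polarization, like symmetric decreasing rearrangement, does not commute with weak limits (consider characteristic functions of sets oscillating across the polarizing hyperplane: the original sequence can converge weakly to a constant density while the polarized sequence converges to something that is not the polarization of that density). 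The same defect undermines your last sentence: Lieb's equality case needs $\mathcal{W}(\phi^0)=\mathcal{W}\bigl((\phi^0)^*\bigr)$, whereas what you actually possess is $\mathcal{W}(\phi^0)=\mathcal{W}(\psi^0)$, and $\psi^0$, the weak limit of the rearrangements, need not coincide with $(\phi^0)^*$, the rearrangement of the weak limit.

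This is precisely the difficulty that the paper's appeal to Lemma 3.2 of Burchard--Guo \cite{BG} is designed to resolve: that lemma asserts that if $f_j\rightharpoonup f$, $f_j^*\rightharpoonup g$, and the limiting interaction energies coincide, then $f$ is a translate of $g$, hence of a radially nonincreasing function. Combined with the zero-first-moment normalization --- which you do set up correctly --- the translation is forced to be trivial, giving $\phi^0=\psi^0$, which is radially nonincreasing as a weak limit of radially nonincreasing functions. To repair your proof, either invoke that lemma (or reprove its content), or supply an equality-case argument that genuinely survives the passage to weak limits; as written, the final third of your argument does not close.
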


\begin{proof}
Let us assume that $\phi^\varepsilon\to \phi$ and $\bar{\phi}^\varepsilon \to h$ weakly in $L^2$ for some functions $\phi$ and $h$ as $\varepsilon \to 0^+$. Firstly, by virtue of Riesz' rearrangement inequality, we have
\begin{equation*}
     \iint\limits_{\mathbb{R}^2\times\mathbb{R}^2}\log\frac{1}{|x-x'|}\phi^\varepsilon(x)\phi^\varepsilon(x')d(x,x')
      \le \iint\limits_{\mathbb{R}^2\times\mathbb{R}^2}\log\frac{1}{|x-x'|}\bar{\phi}^\varepsilon(x)\bar{\phi}^\varepsilon(x')d(x,x').
\end{equation*}
Hence
\begin{equation}\label{2-26}
  \iint\limits_{\mathbb{R}^2\times\mathbb{R}^2}\log\frac{1}{|x-x'|}\phi(x)\phi(x')d(x,x')\le \iint\limits_{\mathbb{R}^2\times\mathbb{R}^2}\log\frac{1}{|x-x'|}h(x)h(x')d(x,x').
\end{equation}
Let $\tilde{\zeta}^\varepsilon$ be defined by $\tilde{\zeta}^\varepsilon(x)=\varepsilon^{-2}\phi^\varepsilon(\varepsilon^{-1}\left(x-X_\varepsilon)\right)$. A direct calculation then deduces as $\varepsilon\to 0^+$,
\begin{equation*}
\begin{split}
     \mathcal{E}_\varepsilon(\zeta^\varepsilon) =&\frac{(\Theta^\varepsilon_-)^3}{4\pi}\iint\limits_{\mathbb{R}^2\times\mathbb{R}^2}\log\frac{1}{|x-x'|}\phi^\varepsilon(x)\phi^\varepsilon(x')d(x,x') \\
     & +\frac{(\Theta^\varepsilon_-)^3}{4\pi}\left(\int_D \zeta^\varepsilon d\textit{m}_2\right)^2\log\frac{1}{\varepsilon}-\frac{W(\Theta^\varepsilon_-)^3}{2}\log\frac{1}{\varepsilon}\int_D\zeta_{_\varepsilon} d\textit{m}_2+\mathcal{R}^\varepsilon_1,
\end{split}
\end{equation*}
and
\begin{equation*}
\begin{split}
     \mathcal{E}_\varepsilon(\tilde{\zeta}^\varepsilon) =&\frac{(\Theta^\varepsilon_-)^3}{4\pi}\iint\limits_{\mathbb{R}^2\times\mathbb{R}^2}\log\frac{1}{|x-x'|}\bar{\phi}^\varepsilon(x)\bar{\phi}^\varepsilon(x')d(x,x') \\
     & +\frac{(\Theta^\varepsilon_-)^3}{4\pi}\left(\int_D \tilde{\zeta}^\varepsilon d\textit{m}_2\right)^2\log\frac{1}{\varepsilon}-\frac{W(\Theta^\varepsilon_-)^3}{2}\log\frac{1}{\varepsilon}\int_D\tilde{\zeta}^\varepsilon d\textit{m}_2+\mathcal{R}^\varepsilon_2,
\end{split}
\end{equation*}
where
\begin{equation*}
  \lim_{\varepsilon\to 0^+}\left(\mathcal{R}^\varepsilon_1-\mathcal{R}^\varepsilon_2\right)=0.
\end{equation*}
Notice that
\begin{equation*}
  0\le \tilde{\zeta}^\varepsilon \le \frac{1}{\varepsilon^2},\ \ \ supp(\tilde{\zeta}^\varepsilon)\subset D, \ \ \  \int_D \tilde{\zeta}^\varepsilon d\nu=\kappa+O(\varepsilon).
\end{equation*}
It is not hard to check that $ \mathcal{E}_\varepsilon(\tilde{\zeta}^\varepsilon) \le   \mathcal{E}_\varepsilon(\zeta^\varepsilon) +o(1)$ as $\varepsilon\to 0^+$. Thus, we have
\begin{equation*}
  \iint\limits_{\mathbb{R}^2\times\mathbb{R}^2}\log\frac{1}{|x-x'|}\phi(x)\phi(x')d(x,x')\le \iint\limits_{\mathbb{R}^2\times\mathbb{R}^2}\log\frac{1}{|x-x'|}h(x)h(x')d(x,x'),
\end{equation*}
which together with $\eqref{2-26}$ deduces
\begin{equation*}
    \iint\limits_{\mathbb{R}^2\times\mathbb{R}^2}\log\frac{1}{|x-x'|}\phi(x)\phi(x')d(x,x')= \iint\limits_{\mathbb{R}^2\times\mathbb{R}^2}\log\frac{1}{|x-x'|}h(x)h(x')d(x,x').
\end{equation*}
By Lemma 3.2 of \cite{BG}, we know that there exists a translation $T$ in $\mathbb{R}^2$ such that $T\phi=h$. Note that
\begin{equation*}
  \int_{\mathbb{R}^2}x\phi(x)d\textit{m}_2=\int_{\mathbb{R}^2}xh(x)d\textit{m}_2=0.
\end{equation*}
Thus $\phi=h$, and the proof is complete.
\end{proof}

Next we study the limiting behavior of the corresponding stream functions $\psi^\varepsilon$.
Define the scaled versions of $\psi^\varepsilon$ as follows
\begin{equation*}
    \Psi^\varepsilon(x):=\psi^\varepsilon(X^\varepsilon+\varepsilon x),\ \ x\in D^\varepsilon.
\end{equation*}

\begin{lemma}\label{le18}
  Up to a subsequence, $\Psi^\varepsilon \to \Psi$ in $C^{1,\gamma}_{\text{loc}}(\mathbb{R}^2)$ for some $\gamma\in(0,1)$, where $\Psi$ is a radial strictly decreasing function in $\mathbb{R}^2$.
\end{lemma}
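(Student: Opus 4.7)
The plan is to recenter by $X^\varepsilon$ and dilate by $\varepsilon$, extract a subsequence convergent in $C^{1,\gamma}_{\mathrm{loc}}$ by elliptic regularity, and then identify the limit using the previous lemma together with the divergence theorem.

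\emph{Step 1: Uniform bounds on $\Psi^\varepsilon$.} Starting from $\psi^\varepsilon = \mathcal{K}\zeta^\varepsilon - \frac{Wr^2}{2}\log\frac{1}{\varepsilon} - \mu^\varepsilon$ and performing the change of variables $(r',z') = X^\varepsilon + \varepsilon y$ in the Green's representation, I would write
\begin{equation*}
\Psi^\varepsilon(x) = \int K(R^\varepsilon+\varepsilon x_1,\varepsilon x_2,R^\varepsilon+\varepsilon y_1,\varepsilon y_2)(R^\varepsilon+\varepsilon y_1)\phi^\varepsilon(y)\,dy - \frac{W(R^\varepsilon+\varepsilon x_1)^2}{2}\log\frac{1}{\varepsilon} - \mu^\varepsilon.
\end{equation*}
Using Lemma \ref{le2} together with $\sigma \sim \varepsilon|x-y|/(2r_*)$, one decomposes $K$ into a $\log\frac{1}{\varepsilon}$-piece, a $\log\frac{1}{|x-y|}$-piece, and a bounded remainder. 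Combined with $\int \phi^\varepsilon \to \kappa/r_* = 4\pi W$ and the expansion $\mu^\varepsilon = (\frac{\kappa r_*}{2\pi}-\frac{Wr_*^2}{2})\log\frac{1}{\varepsilon}+O(1)$ from Lemma \ref{le16}, the leading $\log\frac{1}{\varepsilon}$-contributions cancel exactly because $r_*=\kappa/(4\pi W)$ forces $\frac{\kappa r_*}{2\pi} = 2r_*^2 W$. What remains is
\begin{equation*}
\Psi^\varepsilon(x) = \frac{r_*^2}{2\pi}\int_{\mathbb{R}^2}\log\frac{1}{|x-y|}\phi^\varepsilon(y)\,dy + O(1),
\end{equation*}
uniformly on compact subsets of $\mathbb{R}^2$. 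Since $\phi^\varepsilon$ is $L^\infty$-bounded with support in $B_{R_0}$, this delivers the desired uniform bound.

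\emph{Step 2: Compactness and limit equation.} With $r = R^\varepsilon+\varepsilon x_1$, the PDE $\mathcal{L}\psi^\varepsilon = \zeta^\varepsilon$ rescales to
\begin{equation*}
-\Delta\Psi^\varepsilon + \frac{\varepsilon}{r}\partial_{x_1}\Psi^\varepsilon = r^2 \phi^\varepsilon \quad \text{in } D^\varepsilon,
\end{equation*}
whose right-hand side is uniformly bounded by $4 r_*^2\Lambda_0$ with support in $B_{R_0}$. Standard $L^p$-elliptic regularity then yields $W^{2,p}_{\mathrm{loc}}$-bounds on $\Psi^\varepsilon$ for every $p>1$, so by Sobolev embedding a subsequence $\Psi^\varepsilon \to \Psi$ in $C^{1,\gamma}_{\mathrm{loc}}(\mathbb{R}^2)$ for every $\gamma\in(0,1)$. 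Passing to the limit in the rescaled PDE and invoking the previous lemma to secure $\phi^\varepsilon \to \phi$ weakly in $L^2$ with $\phi$ radially nonincreasing centred at the origin (the centring follows from $\int x \phi^\varepsilon\,dm_2 = 0$), one obtains $-\Delta\Psi = r_*^2\phi$ in $\mathbb{R}^2$. Passing to the limit in the integral formula of Step 1 gives $\Psi(x) = \frac{r_*^2}{2\pi}\int\log\frac{1}{|x-y|}\phi(y)\,dy + C$, which is radial in $x$.

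\emph{Step 3: Strict monotonicity.} The divergence theorem applied to $-\Delta\Psi = r_*^2 \phi$ on $B_r(0)$ yields
\begin{equation*}
r\,\Psi'(r) = -r_*^2 \int_0^r \phi(s)\,s\,ds, \qquad r>0.
\end{equation*}
Since $\phi$ is radially nonincreasing with total mass $4\pi W>0$, it is strictly positive on some interval $[0,R)$ and vanishes beyond, so the right-hand side is strictly negative for every $r>0$. Hence $\Psi$ is radial and strictly decreasing. The principal obstacle is Step 1: three quantities of size $\log\frac{1}{\varepsilon}$ (the logarithmic part of $\mathcal{K}\zeta^\varepsilon$, the subtraction $\frac{Wr^2}{2}\log\frac{1}{\varepsilon}$, and the Lagrange multiplier $\mu^\varepsilon$) must combine to leave a bounded remainder, which relies crucially on the precise asymptotic expansion of $\mu^\varepsilon$ in Lemma \ref{le16} and on the identity $r_*=\kappa/(4\pi W)$ pinning the centre of concentration.
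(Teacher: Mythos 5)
Your proof is correct and follows the same skeleton as the paper's: rescale the equation to $-\Delta\Psi^\varepsilon+\frac{\varepsilon}{R^\varepsilon+\varepsilon x_1}\partial_{x_1}\Psi^\varepsilon=(R^\varepsilon+\varepsilon x_1)^2\phi^\varepsilon$, get $W^{2,p}_{\text{loc}}$ bounds and $C^{1,\gamma}_{\text{loc}}$ compactness, and identify the limit as the logarithmic potential of the radially nonincreasing profile $\phi$ furnished by the preceding lemma. The one genuine divergence is in how the limit is pinned down: you extract the integral representation $\Psi^\varepsilon=\frac{r_*^2}{2\pi}\int\log\frac{1}{|x-y|}\phi^\varepsilon\,dy+O(1)$ directly from the Green's function expansion of Lemma \ref{le2} together with the $\mu^\varepsilon$-asymptotics of Lemma \ref{le16}, which simultaneously supplies the uniform local bound needed to start the elliptic estimates; the paper instead leaves that bound implicit (it follows from $0\le\psi^\varepsilon\le C$) and identifies $\Psi$ with the potential $\tilde\Psi$ a posteriori via Liouville's theorem, using only that both are bounded above and harmonic apart. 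Your route is heavier in bookkeeping but more self-contained and makes the $\log\frac{1}{\varepsilon}$-cancellation explicit; note, though, that once Lemma \ref{le16} is granted the cancellation is automatic from the form of $\mu^\varepsilon$ and does not separately invoke $r_*=\kappa/(4\pi W)$. Your Step 3 (the divergence-theorem computation $r\Psi'(r)=-r_*^2\int_0^r\phi(s)s\,ds$) fills in what the paper dismisses as ``obvious,'' and your normalization $-\Delta\Psi=r_*^2\phi$ is in fact the consistent one (the paper's $-\Delta\Psi=r_*\phi$ absorbs a factor of $r_*$ into $\phi$, consistent with its stated total mass $\kappa$).
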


\begin{proof}
  Recall that
  \begin{equation*}
    \mathcal{L}\psi^\varepsilon=-\frac{1}{r}\frac{\partial}{\partial r}\Big(\frac{1}{r}\frac{\partial\psi^\varepsilon}{\partial r}\Big)-\frac{1}{r^2}\frac{\partial^2\psi^\varepsilon}{\partial z^2}=\zeta^\varepsilon.
  \end{equation*}
  A simple calculation yields
  \begin{equation}\label{2-27}
  -\frac{\partial^2}{\partial x_1^2}\Psi^\varepsilon(x)+\frac{\varepsilon}{R^\varepsilon+\varepsilon x_1}\frac{\partial}{\partial x_1}\Psi^\varepsilon(x)-\frac{\partial^2}{\partial x_2^2}\Psi^\varepsilon(x)=(R^\varepsilon+\varepsilon x_1)^2\phi^\varepsilon(x).
  \end{equation}
  Note that $\{\phi^\varepsilon\}$ is bounded in $L^\infty(\mathbb{R}^2)$. Thus, by classical elliptic estimates, the sequence $\{\Psi^\varepsilon\}$ is bounded in $W^{2,p}_{\text{loc}}(\mathbb{R}^2)$ for every $1\le p<+\infty$. By the Sobolev embedding theorem, we may conclude that $\{\Psi^\varepsilon\}$ is compact in $C^{1,\alpha}_{\text{loc}}(\mathbb{R}^2)$ for every $0<\alpha<1$. Up to a subsequence we may assume $\phi^\varepsilon\to \phi$ weakly-star in $L^\infty(\mathbb{R}^2)$ and $\Psi^\varepsilon \to \Psi$ in $C_{\text{loc}}^{1,\alpha}(\mathbb{R}^2)$. By virtue of \eqref{ad} and \eqref{2-27}, we get
\begin{equation*}
  -\Delta \Psi =r_*\phi \ \ \text{in}\ \mathbb{R}^2,\ \ \  \int_{\mathbb{R}^2}r_*\phi dm_2(x)=\kappa.
\end{equation*}
  Let
  \begin{equation*}
    \tilde{\Psi}:=\frac{1}{2\pi}\int_{\mathbb{R}^2}\log\frac{1}{|x-y|}r_*\phi(y)dm_2(y).
  \end{equation*}
  Hence, we have
  \begin{equation*}
    -\Delta (\Psi-\tilde{\Psi}) =0 \ \ \text{in}\ \mathbb{R}^2.
  \end{equation*}
  Noting that $\Psi$ and $\tilde{\Psi}$ are both bounded from above, by Liouville's theorem, we must have $\Psi\equiv\tilde{\Psi}+C$ for some constant $C$. Obviously $\tilde{\Psi}$ is a radial strictly decreasing function in $\mathbb{R}^2$, and the proof is thus complete.
\end{proof}

Finally, we prove that the geometry of vortex rings constructed above is indeed a topological torus.
\begin{lemma}\label{le19}
  For $\varepsilon$ sufficiently small, $supp(\zeta^\varepsilon)$ is a topological disc.
\end{lemma}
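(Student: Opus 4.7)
The plan is to transfer the question to the rescaled setting $\Psi^\varepsilon\to\Psi$ produced in Lemma \ref{le18}, where $\Psi$ is a \emph{strictly} radially decreasing $C^1$ function, and then run an implicit-function-theorem argument on the level set $\{\Psi^\varepsilon=0\}$.

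Because $i(r,\cdot)$ vanishes only on $(-\infty,0]$ by $(a2)$, the relation $\zeta^\varepsilon=\varepsilon^{-2}i(r,\psi^\varepsilon)$ gives $\mathrm{supp}(\zeta^\varepsilon)=\overline{\{\psi^\varepsilon>0\}}$, which after the $(X^\varepsilon,\varepsilon)$-rescaling equals $\overline{\{\Psi^\varepsilon>0\}}$; it therefore suffices to show that $\{\Psi^\varepsilon>0\}$ is an open topological disc for every small $\varepsilon$. The first step is to identify $\{\Psi>0\}$. The $L^\infty$ bound $\zeta^\varepsilon\le\Lambda_0/\varepsilon^2$ from Lemma \ref{le14}, together with $\int\zeta^\varepsilon d\nu=\kappa$, forces $|\mathrm{supp}(\phi^\varepsilon)|\ge\kappa/(r_*\Lambda_0)+o(1)>0$. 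Passing to the uniform-on-compacts limit in the inclusion $\{\Psi^\varepsilon\ge 0\}\subseteq\{\Psi\ge -\delta\}$ and sending $\delta\downarrow 0$ (downward monotone convergence is available since $\Psi(x)\to-\infty$ as $|x|\to\infty$) gives $|\{\Psi\ge 0\}|>0$, and strict radial monotonicity of $\Psi$ then forces $\{\Psi>0\}=B_{\rho_*}(0)$ and $\{\Psi=0\}=\partial B_{\rho_*}$ for some $\rho_*>0$.

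The second step is to verify that $|\nabla\Psi|>0$ on $\partial B_{\rho_*}$. From the preceding Lemma \ref{le18}, $\phi$ is radially nonincreasing, so the identity $\int\phi\,dm_2=4\pi W>0$ from \eqref{ad} implies $\phi>0$ a.e.\ on some centered ball $B_{r_0}$; Gauss's theorem in the plane applied to $\Psi=\tilde\Psi+C$, where $\tilde\Psi$ is the logarithmic potential of $r_*\phi$, then yields $\Psi_0'(t)=-\frac{1}{2\pi t}\int_{B_t} r_*\phi\,dm_2<0$ for every $t>0$, so in particular $\Psi_0'(\rho_*)<0$. The $C^{1,\gamma}_{\mathrm{loc}}$ convergence provided by Lemma \ref{le18} propagates this bound: there is a tubular annulus $N_\delta:=\{\rho_*-\delta<|x|<\rho_*+\delta\}$ and a constant $c>0$ such that $\partial_{|x|}\Psi^\varepsilon\le -c<0$ on $N_\delta$ for all sufficiently small $\varepsilon$. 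Hence along each radial ray through $N_\delta$, $\Psi^\varepsilon$ is strictly decreasing and changes sign exactly once, so $\{\Psi^\varepsilon=0\}\cap N_\delta$ is a single $C^1$ Jordan curve $\Gamma^\varepsilon$ (a polar graph over $\partial B_{\rho_*}$), bounding a topological open disc $\Omega^\varepsilon$.

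The final step is to patch the sign information globally. By uniform convergence, $\Psi^\varepsilon>0$ on $\overline{B_{\rho_*-\delta}}$ and $\Psi^\varepsilon<0$ on $\overline{B_{R_1+1}}\setminus B_{\rho_*+\delta}$ for small $\varepsilon$, while $\{\Psi^\varepsilon>0\}=\{\phi^\varepsilon>0\}\subseteq\overline{B_{R_1}}$; this pins down $\{\Psi^\varepsilon>0\}=\Omega^\varepsilon$, so $\mathrm{supp}(\phi^\varepsilon)=\overline{\Omega^\varepsilon}$ is a closed topological disc. Undoing the affine rescaling transfers the conclusion to $\mathrm{supp}(\zeta^\varepsilon)$; the whole argument is carried out along an arbitrary $C^{1,\gamma}_{\mathrm{loc}}$-convergent subsequence, with a standard contradiction closure giving the claim for every small $\varepsilon$. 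The main obstacle is ensuring $\rho_*>0$: without the pointwise bound $\zeta^\varepsilon\le\Lambda_0/\varepsilon^2$ from Lemma \ref{le14}, the vorticity could concentrate to a Dirac mass in the scaling limit and the limiting stream function would have no positive superlevel set.
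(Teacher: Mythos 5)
Your proposal is correct and follows essentially the same route as the paper: the paper's proof simply notes that $\{\psi^\varepsilon=0\}=X^\varepsilon+\varepsilon\{\Psi^\varepsilon=0\}$ and invokes Lemma \ref{le18} together with the implicit function theorem. You have merely supplied the details the paper leaves implicit (nontriviality of $\{\Psi>0\}$, nonvanishing of $\nabla\Psi$ on the limiting circle via the logarithmic-potential identity, and the propagation of these facts to $\Psi^\varepsilon$ by $C^{1,\gamma}_{\mathrm{loc}}$ convergence), all of which are sound.
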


\begin{proof}
  Note that
  \begin{equation*}
    \{x\in\Pi\mid\psi^\varepsilon(x)=0\}=X^\varepsilon+\varepsilon\{y\in \mathbb{R}^2\mid \Psi^\varepsilon(y)=0\}.
  \end{equation*}
  The desired result is thus a immediate consequence of Lemma \ref{le18} by the implicit function theorem.
\end{proof}

Now we are ready to give the proof of Theorem \ref{thm1}.
\begin{proof}[Proof of Theorem \ref{thm1}]
Note that the operator $\mathcal{L}$ has a close connection with the Laplacian of cylindrically symmetric functions on $\mathbb{R}^5$; see, e.g., \cite{AF, BB, B1}. It is standard to find that as $r\to 0^+$,
\begin{equation*}
  \psi^\varepsilon=O(r^2),\ \ \frac{1}{r}\frac{\partial \psi^\varepsilon}{\partial z}=O(r)\ \ \text{and}~ \ \frac{1}{r}\frac{\partial\psi^\varepsilon
     }{\partial r}\  \text{approaches a finite limit.}
\end{equation*}
Now Theorem \ref{thm1} follows from the above lemmas by letting $(g, f)=(-B', HH')$.
\end{proof}

\section{Appendix}
In this appendix, we will give a variant of bathtub principle (see Lieb-Loss \cite{Lieb}, \S1.14), which is used in the proof of Lemma \ref{le3}.

\begin{lemma}[variant of bathtub principle]\label{bathtub}
	Let $ (\Omega, \mathcal{M}, \vartheta) $ be a measure space and let $h$ be a real-valued measurable function on $ \Omega $ such that $\vartheta(\{x\in \Omega\mid h(x)>t\})$ is finite for all $t>0$. Let the number $\varrho\in (0,\vartheta(\Omega))$ be given and define a class of measurable functions on $ \Omega $ by
	\begin{equation*}
	\mathcal{C}=\{\omega\mid 0\leq \omega(x)\leq 1\ \   a.e. \ x\in \Omega,\ \ \ \int_{\Omega}\omega(x)\vartheta(dx)\le \varrho\}.
	\end{equation*}
	Consider the maximization problem
	\begin{equation*}
	\mathcal{I}=\sup_{\omega\in \mathcal{C}}\int_{\Omega}h(x)\omega(x)\vartheta(dx).
	\end{equation*}
Suppose $\mathcal{I}<+\infty$, then $\mathcal{I}$ is attained. Moreover, any maximizer $\varpi$ satisfies
\begin{equation}\label{app1}
  \varpi(x)=1\ \ a.e.\  \text{on}\ \{h>\max\{0,\varsigma\}\}\ \ and\ \ \varpi(x)=0\ \  a.e.\  \text{on}\ \{h<\max\{0,\varsigma\}\},
\end{equation}
where
\begin{equation}\label{app2}
	\varsigma=\inf\{t\mid \vartheta(\{x\in \Omega\mid h(x)>t\})\leq \varrho\}\in \mathbb{R}.
	\end{equation}
If $\varsigma>0$, then $\int_{\Omega}\varpi(x)\vartheta(dx)=\varrho$.
\end{lemma}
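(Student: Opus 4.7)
The plan is to prove the lemma in four compact steps centered on the distribution function $F(t) := \vartheta(\{h > t\})$ and the threshold $s := \max\{0,\varsigma\}$. First, I will verify that $F$ is non-increasing, finite on $(0,\infty)$ by hypothesis, and right-continuous at every point (continuity of $\vartheta$ from below applied to $\{h>t_n\}\uparrow\{h>t\}$ as $t_n\downarrow t$). Since $\varsigma$ is defined as an infimum one can pick $t_n\downarrow\varsigma$ with $F(t_n)\le\varrho$, whence right-continuity gives $F(\varsigma)\le\varrho$; conversely, $F(t)>\varrho$ for $t<\varsigma$ together with continuity from above (for $\{h>t\}\downarrow\{h\ge\varsigma\}$ as $t\uparrow\varsigma$) yields $\vartheta(\{h\ge\varsigma\})\ge\varrho$.

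Second, I exhibit an explicit candidate $\varpi^*$ of the form \eqref{app1}. If $\varsigma>0$, set
\[
\varpi^* := \mathbf{1}_{\{h>\varsigma\}} + c\,\mathbf{1}_{\{h=\varsigma\}},
\]
with $c\in[0,1]$ chosen so that $\int\varpi^*\,d\vartheta=\varrho$; this is possible because $F(\varsigma)\le\varrho\le\vartheta(\{h\ge\varsigma\})=F(\varsigma)+\vartheta(\{h=\varsigma\})$. If $\varsigma\le 0$, set $\varpi^*:=\mathbf{1}_{\{h>0\}}$; monotonicity gives $F(0)\le F(\varsigma)\le\varrho$, so $\varpi^*\in\mathcal{C}$. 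In both cases $\varpi^*$ has the prescribed form and lies in $\mathcal{C}$.

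Third, to show $\varpi^*$ is a maximizer, I use, for any $\omega\in\mathcal{C}$, the decomposition
\[
\int h(\varpi^*-\omega)\,d\vartheta \;=\; \int (h-s)(\varpi^*-\omega)\,d\vartheta \;+\; s\int(\varpi^*-\omega)\,d\vartheta.
\]
On $\{h>s\}$ one has $\varpi^*=1\ge\omega$ and $h-s>0$; on $\{h<s\}$ one has $\varpi^*=0\le\omega$ and $h-s<0$; on $\{h=s\}$ the factor $h-s$ vanishes. Hence the first integrand is pointwise nonnegative, so the first term is $\ge 0$. The second term is $\ge 0$ as well: if $s=0$ it vanishes, while if $s=\varsigma>0$ then $\int\varpi^*=\varrho\ge\int\omega$. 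This proves $\int h\,\varpi^*\ge\int h\,\omega$, so $\mathcal{I}$ is attained at $\varpi^*$.

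Finally, for an arbitrary maximizer $\varpi$ the equality $\int h(\varpi^*-\varpi)\,d\vartheta=0$ forces both nonnegative summands in the decomposition to vanish. Pointwise vanishing of $(h-s)(\varpi^*-\varpi)$ immediately gives $\varpi=1$ a.e.\ on $\{h>s\}$ and $\varpi=0$ a.e.\ on $\{h<s\}$, which is exactly \eqref{app1}; and when $\varsigma>0$, vanishing of the second summand with $s>0$ and $\int\varpi^*=\varrho$ forces $\int\varpi\,d\vartheta=\varrho$. The only real obstacle I anticipate is the bookkeeping needed to ensure all the integrals in the decomposition are well defined, which follows because $\mathcal{I}<\infty$ together with $F(t)<\infty$ for $t>0$ yields $\int_{\{h>s\}}h\,d\vartheta\le\mathcal{I}<\infty$; then $h\varpi^*$ and $h\omega$ are integrable on $\{h>0\}$, while their contributions on $\{h\le 0\}$ are trivially controlled.
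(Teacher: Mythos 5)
Your proof is correct and follows essentially the same route as the paper: both arguments hinge on the same chain of inequalities around the threshold $\max\{0,\varsigma\}$, exhibit an explicit maximizer of the prescribed form, and read off \eqref{app1} by forcing each nonnegative term to vanish at equality. The one genuine (minor) difference is your choice of maximizer $\mathbf{1}_{\{h>\varsigma\}}+c\,\mathbf{1}_{\{h=\varsigma\}}$ with a constant $c\in[0,1]$, rather than the paper's $\chi_{\{h>\varsigma\}}+\chi_{A}$ with $A\subset\{h=\varsigma\}$ of prescribed measure; your version is slightly more robust, since in a general measure space the level set $\{h=\varsigma\}$ may contain atoms and a subset $A$ with exactly $\vartheta(A)=\varrho-\vartheta(\{h>\varsigma\})$ need not exist, whereas the constant-weight construction always works.
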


This result is classical. We remark that the freedom of the maximizer occurs only on the level set $\{h=\max\{0,\varsigma\}\}$. We refer the reader to \cite{B2,Dou} for some more general discussions.  For the convenience of the reader, we provide its proof here.

\begin{proof}
  Since $\mathcal{I}<+\infty$, there exists some $t_0>0$ such that $\vartheta(\{x\in \Omega\mid h(x)>t_0\})\le \varrho $. It follows that \eqref{app2} is well-defined and $\varsigma\in \mathbb{R}$.

 Case 1: $\varsigma\le 0$. For every $\omega\in \mathcal{C}$, we have
 \begin{equation}\label{app3}
   \int_\Omega h(x)\omega(x)\vartheta(dx)\le \int_{\{h>0\}}h(x)\omega(x)\vartheta(dx)\le \int_{\{h>0\}}h(x)\vartheta(dx),
 \end{equation}
 which implies
 $$\mathcal{I}\le \int_{\{h>0\}}h(x)\vartheta(dx).$$
 Note that $\sigma(\{h>0\})\le \varrho$. It follows that $\chi_{_{\{h>0\}}}\in \mathcal{C}$ and hence $\mathcal{I}=\int_{\{h>0\}}h(x)\vartheta(dx)$. This shows that $\mathcal{I}$ is attained. Next, we prove \eqref{app1}. Indeed, if $\varpi$ is a maximizer, then all inequalities in \eqref{app3} have to be converted to equalities, which clearly implies \eqref{app1}.

 Case 2: $\varsigma>0$.
  For every $\omega\in \mathcal{C}$, we have
  \begin{equation}\label{app4}
  \begin{split}
     \int_\Omega h(x)\omega(x)\vartheta(dx) & \le \int_{\{h>\varsigma\}} h(x)\omega(x)\vartheta(dx)+\int_{\{h=\varsigma\}} h(x)\omega(x)\vartheta(dx)+\int_{\{h<\varsigma\}} h(x)\omega(x)\vartheta(dx) \\
       & \le \int_{\{h>\varsigma\}} h(x)\omega(x)\vartheta(dx)+\varsigma\int_{\{h\le\varsigma\}} \omega(x)\vartheta(dx) \\
       & \le \int_{\{h>\varsigma\}} (h(x)-\varsigma)\omega(x)\vartheta(dx)+\varsigma \varrho \\
       & \le \int_{\{h>\varsigma\}} (h(x)-\varsigma)\vartheta(dx)+\varsigma \varrho,
  \end{split}
  \end{equation}
which implies
  \begin{equation*}
    \mathcal{I}\le \int_{\{h>\varsigma\}} (h(x)-\varsigma)\vartheta(dx)+\varsigma \varrho.
  \end{equation*}
  On the other hand, in this case, it holds
 \begin{equation*}
    \vartheta(\{h>\varsigma\})\le \varrho \le \vartheta(\{ h\ge \varsigma\}).
  \end{equation*}
  Therefore, we can find a $\bar{\omega}\in \mathcal{C}$ with
    \begin{equation*}
    \bar{\omega}(x)=\chi_{_{\{h>s\}}}(x)+\chi_{_{A}}(x)
  \end{equation*}
  for some measurable set $A\subset \{h=\varsigma\}$ satisfying $\sigma(A)=\varrho-\sigma(\{h>\varsigma\})$. Notice that
  \begin{equation*}
    \int_\Omega h(x)\bar{\omega}(x)\vartheta(dx)=\int_{\{h>\varsigma\}} (h(x)-\varsigma)\vartheta(dx)+\varsigma \varrho.
  \end{equation*}
  Hence
  \begin{equation}\label{app5}
    \mathcal{I}=\int_{\{h>\varsigma\}} (h(x)-\varsigma)\vartheta(dx)+\varsigma \varrho.
  \end{equation}
  This shows that $\mathcal{I}$ is attained. If $\varpi$ is a maximizer, then all inequalities in \eqref{app4} have to be converted to equalities, from which we get \eqref{app1}. Finally, suppose $\varsigma>0$, then it follows from \eqref{app1} and \eqref{app5} that $\int_{\Omega}\varpi(x)\vartheta(dx)=\varrho$, and the proof is thus complete.
\end{proof}

{\bf Acknowledgements:} This work was supported by NNSF of China Grant 11831009 and Chinese Academy of Sciences (No. QYZDJ-SSW-SYS021).

\phantom{s}
 \thispagestyle{empty}

\end{document}